\newtheorem{thm}{Theorem}[section]
\newtheorem{lem}{Lemma}[section]
\newtheorem{Def}{Definition}[section]
\newtheorem{prop}{Proposition}[section]
\renewcommand{\c}{\mathbf{c}}
\newcommand{\m}{\mathbf{m}}
\def\Var{\mathop{\rm Var}}
\begin{document}
\begin{frontmatter}
\title{Margin conditions for vector quantization}
\runtitle{Margin conditions for vector quantization}

\begin{aug}
\author{Cl\'{e}ment Levrard\ead[label=e1]{clement.levrard@math.u-psud.fr}}

\runauthor{C. Levrard}
\affiliation{Universit\'{e} Paris Sud, UPMC and INRIA}
\end{aug}

\begin{abstract}

Recent results in quantization theory show that the convergence rate for the mean-squared
expected distortion of the empirical risk minimizer strategy, for any fixed probability distribution satisfying some regularity conditions, is $\mathcal{O}(1/n)$, where $n$ is the sample size (see, e.g., \cite{Chichi13} or \cite{Levrard12}). However, the dependency of the average distortion on other parameters is not known. 

This paper offers more general conditions, which may be thought of as margin conditions (see, e.g., \cite{Tsybakov99}), under which a sharp upper bound on the expected distortion rate of the empirically optimal quantizer is derived. This upper bound is also proved to be  sharp with respect to the dependency of the distortion on other natural parameters of the quantization issue.

\end{abstract}

\begin{keyword}
\kwd{localization}
\kwd{fast rates}
\kwd{margin conditions}
\end{keyword}
         \end{frontmatter}
       
         \section{Introduction}\label{Introduction}
         
         Quantization, also called lossy data compression in information theory, is the problem of replacing a probability distribution with an efficient and compact representation, that is a finite set of points. To be more precise, let $P$ denote a probability distribution over $\mathbb{R}^d$ and $k$ a positive integer. A so-called $k$-quantizer $Q$ is a map from $\mathbb{R}^d$ to $\mathbb{R}^d$, whose image set is made of exactly $k$ points, that is $\left |Q(\mathbb{R}^d) \right | = k$. For such a quantizer, every image point $c_i$ $\in$ $Q\left (\mathbb{R}^d \right )$ is called a code point, and the vector composed of the code points $(c_1, \hdots, c_k)$ is called a codebook.  By considering the preimages of its code points, a quantizer $Q$  partitions the Euclidean space $\mathbb{R}^d$ into $k$ groups, and assigns each group a representative. General references on the subject are to be found in \cite{GL00}, \cite{Gersho91} and \cite{Linder02} among others. 
				
         The quantization theory was originally developed as a way to answer signal compression issues in the late 40's (see, e.g., \cite{Gersho91}). However, unsupervised classification is also in the scope of its application. Isolating meaningful groups from a cloud of data is a topic of interest in many fields, from social science to biology. Classifying points into dissimilar groups of similar items is as more interesting as the amount of accessible data is large. In many cases data need to be preprocessed through a quantization algorithm in order to be exploited.
         
         If the distribution $P$ has a finite second moment, the performance of a quantizer $Q$ is measured by the risk, or distortion
         \[
         R(Q) := P \| x - Q(x) \| ^2 ,
         \] 
         where $Pf$ means integration of the function $f$ with respect to $P$. The choice of the Euclidean squared norm is convenient, since it takes advantages of the Euclidean space structure of $\mathbb{R}^d$. Nevertheless, it is worth pointing out that several authors deal with more general distortion functions. For further information on this topic, the interested reader is referred to \cite{GL00} or \cite{Fischer10}.
				
				In order to minimize the distortion introduced above, it is clear that only quantizers of the type $x \mapsto \arg\min_{c_1, \hdots, c_k}{\|x - c_i\|^2}$ are to be considered. Such quantizers are called nearest-neighbor quantizers. With a slight abuse of notation, $R(\c)$ will denote the risk of the nearest-neighbor quantizer associated with a codebook $\c$.

         Provided that $P\|x\|^2 < \infty $, there exist optimal codebooks minimizing the risk $R$ (see, e.g., Lemma 8 in \cite{Pollard82b} or  Theorem 4.12 in \cite{GL00}). The aim is to design a codebook $\hat{\c}_n$, according to a $n$-sample drawn from $P$, whose distortion is as close as possible to the optimal distortion $R(\c^*)$, where $\c^*$ denotes an optimal codebook. 
         
         To solve this problem, most approaches to date attempt to implement the principle of empirical risk minimization in the vector quantization context. Let $X_1, \hdots, X_n$ denote an independent and identically distributed sample with distribution $P$. According to this principle, good code points can be found by searching for ones that minimize the empirical distortion over the training data, defined by 
         \[
         \hat{R}_n(\mathbf{c}) := \frac{1}{n}\sum_{i=1}^{n}{\left \|X_i - Q(X_i)\right \|^2} = \frac{1}{n}\sum_{i=1}^{n}{ \min_{j=1, \hdots, k}{\|X_i - c_j\|^2}}.
         \]  
          If the training data represents the source well, then $\hat{\mathbf{c}}_n$ will hopefully also perform near optimally on the real source, that is $\ell(\hat{\c}_n,\c^*) = R(\hat{\c}_n)-R(\c^*) \approx 0$. The problem of quantifying how good empirically designed codebooks are, compared to the truly optimal ones, has been extensively studied, as for instance in \cite{Linder02}.  
          
          It has been proved in \cite{Linder94} that $\mathbb{E}\ell(\hat{\c}_n,\c^*) = \mathcal{O}(1/\sqrt{n})$, provided that $P$ has a finite second moment. However, this upper bound can be tightened whenever the source distribution satisfies additional assumptions. 
          
             For the special case of finitely supported distributions, it is shown in \cite{Antos04} that  $\mathbb{E}\ell(\hat{\c}_n,\c^*) = \mathcal{O}(1/n)$. There are much more results in the case where $P$ is assumed to have a density. 
             
             In fact, different sets of assumptions have been introduced in \cite{Antos04}, \cite{Pollard82} or \cite{Levrard12}, for the loss $\mathbb{E}\ell(\hat{\c}_n,\c^*)$ to decrease at the rate $\mathcal{O}(1/n)$ in the density case. As shown in \cite{Levrard12}, these different sets of assumptions turn out to be equivalent to a technical condition, similar to that used in \cite{Massart06} to derive fast rates of convergence in the statistical learning framework.
						
						Thus, a question of interest is to know whether some margin type conditions can be derived for the source distribution to satisfy the technical condition mentioned above, as has been done in the statistical learning framework in \cite{Tsybakov99}.
					
						 Theorem 3.2 of \cite{Levrard12} offers a partial answer, proving that a sufficient condition is that $P$ is divided into $k$ well separated areas. However, this condition is not fully satisfactory, since it consists in a bound on the density located at the $d-1$ dimensional region between optimal code cells, whereas margin conditions in the statistical learning framework are bounds on the weight with respect to $P$ of the $\varepsilon$-neighborhood of the critical value $1/2$ for the regression function.
             
             Next, the scope of Theorem 3.2 of \cite{Levrard12} is constrained to distributions with continuous densities, whereas margin conditions in \cite{Tsybakov99} do not require regularity of the regression function.
             
             This paper addresses both these issues, providing a condition which can  clearly be thought of as a margin condition in the quantization framework, under which the loss $\mathbb{E}\ell(\hat{\c}_n,\c^*) = \mathcal{O}(1/n)$. 
						
						Moreover, some explicit oracle inequality is derived in this case, that is an upper bound of the form $\mathbb{E}\ell(\hat{\c}_n,\c^*) \leq C(k,d,P)/n$, where the dependency of $C(k,d,P)$ on its parameters is explicit, developing the technique used in \cite{Levrard12} or \cite{Chichi13}. It is worth pointing out that the parameters mentioned in this result, such as the smaller distance between two optimal code points, are rather natural from the quantization point of view. 
             
             In addition, this result allows to partially answer the problem mentioned in \cite{Antos05} about the minimax rates over distributions satisfying Pollard's condition. This rate has been proved in \cite{Antos05} to be $1/\sqrt{n}$, which is at first sight contradictory with the individual convergence rate of $1/n$ derived for every distribution in this case.

              The paper is organized as follows. In Section \ref{Notation} some notation and definition are introduced, as well as the so-called margin conditions. The main results are exposed in Section \ref{Results}: firstly an oracle inequality on the loss is stated, along with a minimax result, then it is shown that Gaussian mixtures are in the scope of the margin conditions. Finally, proofs are gathered in Section \ref{Proofs}, and the proofs of technical intermediate results are to be found in Section \ref{Technical results}.

         \section{Notation and Definitions}\label{Notation}
				 
				Throughout the paper, for $M >0$ and $a$ in $\mathbb{R}^d$, $\mathcal{B}(a,M)$ will denote the closed Euclidean ball with center $a$ and radius $M$. With a slight abuse of notation, $P$ is said to be $M$-bounded if its support is included in $\mathcal{B}(0,M)$.
				
         To frame the quantization issue as an empirical risk minimization issue, the following contrast function $\gamma$ is introduced as
         
         \begin{align*}
          \gamma : \left \{
                 \begin{array}{@{}ccl@{}}
                 \left (\mathbb{R}^d \right )^k \times \mathbb{R}^d& \longrightarrow & \mathbb{R} \\
                 \hspace{0.45cm}  (\mathbf{c},x) & \longmapsto & \underset{j=1, \hdots, k}{\min}{\left \| x-c_j \right \|^2}
                 \end{array}
                 \right . ,
          \end{align*}
          where $\c = (c_1, \hdots, c_k)$ denotes a codebook, that is a $k d$-dimensional vector. The risk $R(\c)$ then takes the form $R(\c)= R(Q) = P\gamma(\c,.)$, where we recall that $Pf$ denotes the integration of the function $f$ with respect to $P$. Similarly, the empirical risk $\hat{R}_n(\c)$ can be defined as $\hat{R}_n(\c) = P_n\gamma(\c,.)$, where $P_n$ is the empirical distribution associated with $X_1, \hdots, X_n$, in other words $P_n(A) = 1/n \left | \{i |X_i \in A \} \right |$, for every measurable subset $A \subset \mathbb{R}^d$. 
					
					It is worth pointing out that, if $P\|x\|^2 < \infty$, then there exist such minimizers $\hat{\c}_n$ and $\mathbf{c}^*$ (see, e.g., Theorem 4.12 in \cite{GL00}). In the sequel the set of minimizers of the risk $R(.)$ will be denoted by $\mathcal{M}$.

            Let $c_1, \hdots, c_k$ be a sequence of code points. A central role is played by the set of points which are closer to $c_i$ than to any other $c_j$'s. To be more precise, the Voronoi cell, or quantization cell associated with $c_i$ is the closed set defined by
         \begin{align*}\label{Voronoidefinition}
         V_i(\c)= \left \{ x \in \mathbb{R}^d | \quad  \forall j \neq i \quad  \|x-c_i\| \leq \|x-c_j\|\right \}. 
         \end{align*}
         It may be noted that $(V_1(\c), \hdots, V_k(\c))$ does not form a partition of $\mathbb{R}^d$, since $V_i(\c) \cap V_j(\c) $ may be non empty. To address this issue, a Voronoi partition associated with $\c$ is defined as a sequence of subsets $(W_1(\c), \hdots, W_k(\c))$ which forms a partition of $\mathbb{R}^d$, and such that for every $i=1, \hdots, k$,
         \[
         \bar{W}_i(\c)=V_i(\c),
         \]
         where $\bar{W}_i(\c)$ denotes the closure of the subset $W_i(\c)$. The open Voronoi cell is defined the same way by
         \begin{align*}
         \overset{o}{V}_i(\c)=\left \{ x \in \mathbb{R}^d | \quad  \forall j \neq i \quad  \|x-c_i\| < \|x-c_j\|\right \}.
         \end{align*} 
         
          Given a Voronoi partition $W(\c)=(W_1(\c), \hdots, W_k(\c))$, the following inclusion holds, for $i$ in $\left \{1,\hdots,k\right \}$,
          \[
          \overset{o}{V}_i(\c) \subset W_i(\c) \subset V_i(\c),
          \]
          and the risk $R(\c)$ takes the form
         \[
         R(\c)= \sum_{i=1}^{k}{P\left ( \|x-c_i\|^2 \mathbbm{1}_{W_i(\c)}(x) \right )},
         \]
         where $\mathbbm{1}_A$ denotes the indicator function associated with $A$. In the case where $(W_1, \hdots, W_k)$ are fixed subsets such that $P(W_i) \neq 0$, for every $i=1, \hdots, k$, it is clear that
         \[
         P(\|x - c_i\|^2 \mathbbm{1}_{W_i(\c)}(x)) \geq P(\|x - \eta_i\|^2 \mathbbm{1}_{W_i(\c)}(x)),
         \]
         where $\eta_i$ denotes the conditional expectation of $P$ over the subset $W_i(\c)$, that is 
         \[
         \eta_i = \frac{P(x\mathbbm{1}_{W_i(\c)}(x))}{P(W_i(\c))}.
         \] 
         Moreover, it is proved in Theorem 4.1 of \cite{GL00} that, for every Voronoi partition $W(\c^*)$ associated with an optimal codebook $\c^*$, and every $i=1, \hdots, k$, $P(W_i(\c^*)) \neq 0$. Consequently, any optimal codebook satisfies the so-called centroid condition (see, e.g., Section 6.2 of \cite{Gersho91}), that is
         \[
         \c^*_i= \frac{P(x\mathbbm{1}_{W_i(\c^*)}(x))}{P(W_i(\c^*))}.
         \]
         As a remark, the centroid condition ensures that, for every $\c^*$ in $\mathcal{M}$ and $i \neq j$, 
         \begin{align*}
         P(V_i(\c^*) \cap V_j(\c^*)) = P\left ( \left \{ x \in \mathbb{R}^d| \quad \|x-c_i^*\|= \|x - c_j^*\| \right \} \right ) = 0.
         \end{align*}
         A proof of this statement can be found in Theorem 4.2 of \cite{GL00}. According to this remark, it is clear that, for every optimal Voronoi partition $(W_1(\c^*), \hdots, W_k(\c^*))$,
         \begin{align}\label{nomansland}
         \left \{
         \begin{array}{@{}ccc}
         P(W_i(\c^*)) & = &P(V_i(\c^*)) \\
         P_n(W_i(\c^*)) & \underset{a.s.}{=} & P_n(V_i(\c^*)).
         \end{array}
         \right .
         \end{align} 
				
				The following quantities are of importance in the bounds exposed in Section \ref{mainresult}:
				\begin{align}\label{Betpmin}
				\left \{
				\begin{array}{@{}ccc}
				B &=& \min_{\c^* \in \mathcal{M}, i \neq j}{\| c_i^* - c_j^*\|}\\
				p_{min} &=& \min_{\c^* \in \mathcal{M}, i =1,\hdots,k}{P(V_i(\c^*))}.
				\end{array}
				\right .
				\end{align}
         
         The role of the boundaries between optimal Voronoi cells may be compared to the role played by the critical value $1/2$ for the regression function in the statistical learning framework. To draw this comparison, the following set is introduced, for any $\c^*$ $\in$ $\mathcal{M}$,
         \begin{align*}
         N(\c^*) = \bigcup_{i \neq j}{V_i(\c^*) \cap V_j(\c^*)}.
         \end{align*} 
         Next, the critical region $N^*$ is defined as
         \begin{align*}
         N^* = \bigcup_{\c^* \in \mathcal{M}}{N(\c^*)}.
         \end{align*}
         This region seems to be of importance when considering the conditions under which the empirical risk minimization strategy for the quantization issue achieves faster rates of convergence, as exposed in \cite{Levrard12}. However, to fully draw the comparison between the margin conditions for the statistical learning issue (see, e.g., \cite{Tsybakov99}) and quantization, the neighborhood of this region has to be introduced. For this purpose the $t$-neighborhood of the critical region is defined as
         \begin{align*}
         N^*_t=  \left \{ x \in \mathbb{R}^d | \quad d (x, N^*) \leq t \right \}.
         \end{align*}
         Intuitively, if $P(N^*_t)$ is small enough, then the source distribution $P$ is concentrated around its optimal codebook, and may be thought of as a slight modification of the probability distribution with finite support made of an optimal codebook $\c^*$. To be more precise, let introduce the following key assumption: 
         \begin{Def}[Margin condition]\label{margincondition}
         Denote by $p(t) = P(N^*(t))$. Then $P$ satisfies a margin condition with radius $r_0$ if and only if
				\begin{itemize}
				\item[$i)$] $P$ is bounded by $M$,
					\item[$ii)$] $\mathcal{M}$ is finite,
					\item[$iii)$] For all $0 \leq t \leq r_0$,
					\begin{align}\label{majorationkappa}
					p(t) \leq \frac{B p_{min}}{128 M^2}t.
					\end{align}
				\end{itemize}
         \end{Def}			
				Contrary to the conditions required in \cite{Tsybakov99} in the framework of supervised classification, the margin condition introduced here only requires a local control of the weight of the neighborhood of the critical region. It is quite obvious that a global margin condition of the type $p(t) \leq B p_{min}/128 M^2 t$ for every $t >0$ implies the condition defined above. However, requiring only a local control of the weight function $p(t)$ enlarges the scope of our results, since it allows to deal with non continuous probability distributions. This point is illustrated in the following example:
				
				\textbf{Example 1}: Assume that there exists $r>0$ such that $p(x)=0$ if $x\leq r$ (for instance if $P$ is supported on $k$ points). Then $P$ satisfies a margin condition with radius $r$.

         It is also worth pointing out that the condition mentioned in \cite{Tsybakov99} requires a control of the weight of the neighborhood of the critical value $1/2$ with a polynomial function with degree larger than $1$. In the quantization framework, the special role played by the exponent $1$ leads to only consider linear controls of the weight function. This point is explained by the following example:
				
				\textbf{Example 2}: Assume that $P$ is bounded by $M$, and that there exists $Q>0$ and $q>1$ such that $p(x) \leq Q x^q$. Then $P$ satisfies \eqref{majorationkappa}, with 
         \[
         r_0 = \frac{B}{4 \sqrt{2} M} \left ( \frac{p_{min} B}{16 \sqrt{2}M Q} \right )^{1/(q-1)}.
         \]
				
				In the case where $P$ has a density, the condition \eqref{majorationkappa} can be thought of as a generalization of the condition mentioned in Theorem 3.2 of \cite{Levrard12}, which requires the density of the distribution to be small enough over the critical region. In fact, provided that $P$ has a continuous density, a uniform bound on the density over the critical region provides a local control of the weight function with a polynomial function of degree 1. This idea is developed in the following example:
					
		\textbf{Example 3}(Continuous densities): Assume that $P$ has a continuous density $f$, is bounded by $M$, and that $\mathcal{M}$ is finite. Moreover, assume that
         \begin{align}\label{continuousdensity}
         \int_{N^*}{f(u)du} < \frac{B p_{min}}{128 M^2}.
         \end{align}
         Then, by considering the derivative at $0$ of the map $t \mapsto p(t)$, there exists $r_0 >0$ such that $P$ satisfies a margin condition with radius $r_0 >0$. It can easily be deduced from \eqref{continuousdensity} that an uniform bound on the density located at the critical region can provide a sufficient condition for a distribution $P$ to satisfy  a margin condition. Such a result has to be compared to Theorem 3.2 of \cite{Levrard12}, where it was required that
         \[
         \|f_{\left | N^* \right .}\|_{\infty} \leq \frac{\Gamma\left ( \frac{d}{2} \right ) B }{2^{d+5} M^{d+1} \pi^{d/2}}p_{min},
         \]
         where $\Gamma$ denotes the Gamma function.

				Another interesting parameter of the quantization issue is the following separation factor, which quantifies the difference between optimal codebooks and local minimizers of the risk.
				\begin{Def}\label{epsilonseparation}
          Denote by $\tilde{\mathcal{M}}$ the set of local minimizers of the map $\c \longmapsto P\gamma(\c,.)$. Then $P$ is said to be $\varepsilon$-separated if 
          \begin{align}\label{separationfactor}
          \inf_{\c \in \tilde{\mathcal{M}} \cap \mathcal{M}^c}{\ell(\c,\c^*)} = \varepsilon.
          \end{align}
					\end{Def}
          It may be noticed that local minimizers of the risk function satisfy the centroid condition. Whenever $P$ has a density and $P\|x\|^2 < \infty$, it can be proved that the set of minimizers of $R$ coincides with the set of codebooks satisfying the centroid condition, also called stationary points (see, e.g., Lemma A of \cite{Pollard82}). However, this result cannot be extended to non continuous distributions, as proved in Example 4.11 of \cite{GL00}. 
					
			The main results of the present paper are based on the following proposition, which connects the margin condition stated in Definition \ref{margincondition} to the condition introduced in Theorem 2 of \cite{Antos04}.
             
             \begin{prop}\label{lienconditionmarge}
            
         Assume that $P$ satisfies a margin condition with radius $r_0$, and is $\varepsilon$-separated. Then, for every codebook $\c$ in $\mathcal{B}(0,M)^k$,
         \[
         \|\c - \c^*(\c)\|^2 \leq \kappa_0 \ell(\c,\c^*),
         \]
         where $\c^*(\c) \in \arg\min_{\c^* \in \mathcal{M}}{\|\c - \c^*\|}$,  and  $\kappa_0 = 4kM^2\left ( \frac{1}{\varepsilon} \vee \frac{64 M^2}{p_{min} B^2 r_0^2} \right )$.
         \end{prop}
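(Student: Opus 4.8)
The plan is to combine three ingredients: a crude diameter bound valid on all of $\mathcal{B}(0,M)^k$, a local quadratic lower bound for $\ell(\cdot,\c^*)$ near $\mathcal{M}$ driven by the margin condition, and a confinement argument resting on $\varepsilon$-separation. Write $\rho:=Br_0/(4M)$ and $\alpha:=\varepsilon\wedge\frac{p_{min}B^2r_0^2}{64M^2}$, so that $\kappa_0=4kM^2/\alpha$ up to universal constants. Two preliminary remarks. \emph{(i)} For $\c,\c'\in\mathcal{B}(0,M)^k$ one has $\|\c-\c'\|^2=\sum_{i=1}^k\|c_i-c_i'\|^2\le 4kM^2$, so the claim is immediate whenever $\ell(\c,\c^*)\ge\alpha$; it suffices to treat $\ell(\c,\c^*)<\alpha$. \emph{(ii)} The margin condition forces $\rho<M/2$: since $\partial V_i(\c^*)\subset N^*$ and every point of $\mathcal{B}(0,M)$ lies within distance $2M^2/B$ of some bisector $V_i(\c^*)\cap V_j(\c^*)$ (project onto the hyperplane, following the segment back to $\partial V_i(\c^*)$ if it leaves $V_i(\c^*)$), we get $p(t)=1$ for $t\ge 2M^2/B$; feeding $t=2M^2/B$ into \eqref{majorationkappa} would give $1\le p_{min}/64$, which is absurd, so $r_0<2M^2/B$ and therefore $B^2r_0^2<4M^4\le 64kM^4$.

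\emph{Local quadratic lower bound.} I would prove that for every $\c^*\in\mathcal{M}$ and every $\c$ with $t:=\|\c-\c^*\|\le\rho$,
\[
\ell(\c,\c^*)\ \ge\ \tfrac{p_{min}}{4}\,\|\c-\c^*\|^2.
\]
Refining $W(\c^*)$ and $W(\c)$ against one another gives $\ell(\c,\c^*)=\sum_{i,j}\int_{W_i(\c^*)\cap W_j(\c)}\bigl(\|x-c_j\|^2-\|x-c_i^*\|^2\bigr)\,dP(x)$. On the diagonal ($i=j$) I expand $\|x-c_i\|^2-\|x-c_i^*\|^2=\|c_i-c_i^*\|^2+2\langle x-c_i^*,c_i^*-c_i\rangle$ and use the centroid condition $P\bigl((x-c_i^*)\mathbbm{1}_{W_i(\c^*)}\bigr)=0$ to turn the linear term into an integral over $W_i(\c^*)\setminus W_i(\c)$ only. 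The geometric point is that any $x$ in $W_i(\c^*)$ reassigned to some $W_j(\c)$ with $j\neq i$ satisfies $\|x-c_j^*\|-\|x-c_i^*\|\le 2t$ and lies within distance $4Mt/B\le 4M\rho/B=r_0$ of $V_i(\c^*)\cap V_j(\c^*)\subset N^*$, so the total mass reassigned between cells is at most $p(4Mt/B)\le\frac{Bp_{min}}{128M^2}\cdot\frac{4Mt}{B}=\frac{p_{min}t}{32M}$ by \eqref{majorationkappa}; since $\|x-c_i\|,\|x-c_i^*\|\le 2M$, such a point changes the distortion by at most $4Mt$, so the off-diagonal terms and the centroid correction are each at least $-\tfrac18p_{min}t^2$, whereas the diagonal main term is $\sum_i P(W_i(\c^*)\cap W_i(\c))\|c_i-c_i^*\|^2\ge\tfrac{p_{min}}{2}\|\c-\c^*\|^2$, using $P(W_i(\c^*)\cap W_i(\c))\ge p_{min}-p(4Mt/B)\ge p_{min}/2$ because $t\le\rho<M/2$. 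Summing yields the display; the constant $128$ in \eqref{majorationkappa} is tuned so these pieces match.

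\emph{Confinement via $\varepsilon$-separation.} Let $\c$ satisfy $\ell(\c,\c^*)<\alpha$ and let $C$ be the connected component of the open set $\{\c'\in\mathcal{B}(0,M)^k:R(\c')<R(\c^*)+\alpha\}$ containing $\c$. As $\overline{C}$ is compact, $R$ attains its minimum over $\overline{C}$ at some $\c_0$; since $R(\c_0)\le R(\c)<R(\c^*)+\alpha$ while any point of $\overline{C}$ with smaller value would lie in the open set and hence in the same component, $\c_0\in C$ and is a genuine local minimiser of $R$, that is $\c_0\in\tilde{\mathcal{M}}$ — that $\c_0$ is interior to $\mathcal{B}(0,M)^k$ is a routine consequence of the centroid condition and $|\mathcal{M}|<\infty$. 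As $\ell(\c_0,\c^*)<\alpha\le\varepsilon$, $\varepsilon$-separation forces $\c_0\in\mathcal{M}$. I claim $C\subset\mathcal{B}(\c_0,\rho)$: otherwise a path in $C$ from $\c_0$ to a point outside this ball meets $\{\|\c'-\c_0\|=\rho\}$ at some $\c''$, and the local bound at $\c_0$ gives $\ell(\c'',\c^*)\ge\tfrac{p_{min}}{4}\rho^2=\tfrac{p_{min}B^2r_0^2}{64M^2}\ge\alpha$, contradicting $\c''\in C$. Hence $\|\c-\c^*(\c)\|\le\|\c-\c_0\|<\rho$, and the local bound at $\c_0$ gives $\|\c-\c^*(\c)\|^2\le\|\c-\c_0\|^2\le\tfrac{4}{p_{min}}\ell(\c,\c^*)\le\tfrac{256kM^4}{p_{min}B^2r_0^2}\ell(\c,\c^*)\le\kappa_0\,\ell(\c,\c^*)$, the middle inequality by remark \emph{(ii)}. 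This settles the remaining case.

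The step I expect to be the main obstacle is the confinement argument: $\c\mapsto R(\c)=P\min_j\|x-c_j\|^2$ is not convex — nor even quasi-convex — along line segments, so one cannot just walk from $\c$ towards $\c^*(\c)$ and invoke monotonicity of $R$. The role of $\varepsilon$-separation is precisely to exclude spurious low-distortion valleys far from $\mathcal{M}$, by forcing each connected component of a small sublevel set of $R$ to contain a \emph{global} minimiser, after which the local bound pins that component into a ball of radius $\rho$. The remainder is bookkeeping — carrying the numerical constants through the cell-refinement estimate so they reproduce the $64$ and $128$ in $\kappa_0$ and \eqref{majorationkappa}, and dispatching two degeneracies: $\c_0$ escaping to $\partial\mathcal{B}(0,M)^k$ (ruled out by pushing code points to their interior cell centroids), and $\tilde{\mathcal{M}}\cap\mathcal{M}^c=\emptyset$, where $\varepsilon=+\infty$ and only the $\tfrac{p_{min}B^2r_0^2}{64M^2}$-term of $\alpha$ survives — still harmless, since remark \emph{(ii)} keeps it $\le kM^2p_{min}$ and thus $\kappa_0\ge 4/p_{min}$.
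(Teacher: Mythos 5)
Your argument is correct in substance, and its two engines are the same as the paper's: a local quadratic lower bound $\ell(\c,\c^*)\geq c\,p_{min}\|\c-\c^*\|^2$ for $\|\c-\c^*\|$ of order $Br_0/M$, obtained from the cell decomposition, the centroid condition, and the fact that reassigned points lie within distance of order $M\|\c-\c^*\|/B$ of $N^*$ (this is exactly the content of the paper's Lemma 4.1, and your constants check out), combined with the dichotomy $\varepsilon\wedge\frac{p_{min}B^2r_0^2}{64M^2}$ and the crude bound $\|\c-\c^*(\c)\|^2\leq 4kM^2$ when the loss is not small. Where you genuinely differ is the far case: the paper minimizes $R$ over the region $\{\|\c-\c^*(\c)\|\geq Br_0/(4\sqrt{2}M)\}$ and argues that this minimizer is either a local minimizer of $R$ (hence has loss at least $\varepsilon$ by separation) or sits on the inner sphere where the local bound applies; you instead take the connected component $C$ of the sublevel set $\{R<R(\c^*)+\alpha\}$ containing $\c$, show that a minimizer $\c_0$ of $R$ over $\overline{C}$ must be a global minimizer, and confine $C$ to the ball $\mathcal{B}(\c_0,\rho)$ by a path-crossing argument. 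Your packaging buys a slightly stronger conclusion in the small-loss regime ($\|\c-\c^*(\c)\|^2\leq\frac{4}{p_{min}}\ell(\c,\c^*)$, with no $4kM^2$ factor) and automatic attainment of the minimum (compactness of $\overline{C}$), at the cost of a more topological argument; it is a legitimate alternative, and your auxiliary remark that the margin condition forces $r_0<2M^2/B$ is sound.

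The one soft spot is the assertion that $\c_0$ is interior to $\mathcal{B}(0,M)^k$ ``by the centroid condition and $|\mathcal{M}|<\infty$'': as stated this is circular (you cannot invoke stationarity of $\c_0$ before knowing it is a local minimizer of $R$ on $(\mathbb{R}^d)^k$, which is precisely what is at stake), and a cell centroid can in degenerate cases lie on the sphere. What you actually need --- that $\varepsilon$-separation applies to $\c_0$ --- is true and has a one-line repair: let $\Pi$ denote coordinatewise projection onto $\mathcal{B}(0,M)$; since $\mathrm{supp}(P)\subset\mathcal{B}(0,M)$, one has $\gamma(\Pi\c',x)\leq\gamma(\c',x)$ on the support, so for $\c'$ sufficiently close to $\c_0$ either $R(\c')\geq R(\c^*)+\alpha>R(\c_0)$, or $\Pi\c'$ lies in $C\subset\overline{C}$ (it is in $\mathcal{B}(0,M)^k$, within $\|\c'-\c_0\|$ of $\c_0$, and has risk below $R(\c^*)+\alpha$) and hence $R(\c')\geq R(\Pi\c')\geq R(\c_0)$; thus $\c_0$ is a genuine local minimizer of $R$ on all of $(\mathbb{R}^d)^k$ and Definition 2.2 applies. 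With that patch your proof is complete; note that the paper's own far-case step is no more explicit about attainment and boundary effects than you are.
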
	         
					As mentioned in \cite{Chichi13} or \cite{Levrard12}, the connection between the loss and the Euclidean squared distance can be thought of as a technical margin condition. It is worth pointing out that the dependency of $\kappa_0$ on different parameters of the quantization issue is explicit. This point allows us to derive explicit upper bounds on the excess risk in the following section.

         \section{Results}\label{Results}
				
				\subsection{Risk bound}\label{riskbound}

         The main result of this paper is the following:

         \begin{thm}\label{mainresult}
          Assume that $P$ satisfies a margin condition with radius $r_0$, and is $\varepsilon$-separated. Let $\kappa_0$ be defined as
					\[
					\kappa_0 = 4kM^2\left ( \frac{1}{\varepsilon} \vee \frac{64 M^2}{p_{min} B^2 r_0^2} \right ).
					\]
					If $\hat{\c}_n$ is an empirical risk minimizer, then, with probability larger than $1-2e^{-x}$,
         \begin{align}\label{firstinequality}
         \ell(\hat{\c}_n,\c^*) \leq C_0 \kappa_0 \frac{\left |\mathcal{M} \right |^2  R(\c^*)}{n} + \kappa_0^3 \frac{C_1 }{n^2} + \kappa_0 \frac{C_2}{n}x + \frac{C_3}{n}x,
         \end{align}
         where $C_0$ is an absolute constant, $C_1$ is a combination of square roots of polynomial functions in $k$, $\log(k)$, $d$, $B$ and $M$, $C_2$ is polynomial in $k$ and $M$, $C_3$ is polynomial in $M$ and $\sqrt{k}$.
         
         Moreover, under the same conditions, with probability larger than $1-2e^{-x}$,
         \begin{align}\label{secondinequality}
          \ell(\hat{\c}_n,\c^*) \leq C'_0 \kappa_0 \frac{M^2 kd \left (\log(4|\mathcal{M}| \sqrt{kd}) +1 \right )}{n} +  \kappa_0 \frac{144 M^2}{n}x + \frac{64 M^2}{n}x,
          \end{align}
          where $C_0'$ is an absolute constant.
         \end{thm}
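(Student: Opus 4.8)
The plan is to derive both inequalities from the ``Bernstein--type'' relation of Proposition~\ref{lienconditionmarge} by a localization argument for the empirical process indexed by the family $\c\mapsto\gamma(\c,\cdot)-\gamma(\c^*(\c),\cdot)$, following the strategy of \cite{Massart06}, \cite{Levrard12} and \cite{Chichi13}. Since $P$ is $M$--bounded we may assume $\hat{\c}_n\in\mathcal B(0,M)^k$, and since every $\c^*\in\mathcal M$ has the same risk, the definition of $\hat{\c}_n$ yields the basic inequality
\[
\ell(\hat{\c}_n,\c^*)\;\le\;(P-P_n)\bigl(\gamma(\hat{\c}_n,\cdot)-\gamma(\c^*(\hat{\c}_n),\cdot)\bigr).
\]
By Proposition~\ref{lienconditionmarge}, $\ell(\hat{\c}_n,\c^*)\le\delta$ forces $\|\hat{\c}_n-\c^*(\hat{\c}_n)\|^2\le\kappa_0\delta$, so $\hat{\c}_n$ lies in the union, over the finitely many $\c^*\in\mathcal M$, of the Euclidean balls $\mathcal B(\c^*,\sqrt{\kappa_0\delta})$. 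It thus suffices to bound, for each fixed $\c^*$ and each $\delta$, the supremum $Z(\delta)=\sup\{(P-P_n)(\gamma(\c,\cdot)-\gamma(\c^*,\cdot)):\c\in\mathcal B(0,M)^k,\ \|\c-\c^*\|\le\sqrt{\kappa_0\delta}\}$, and then to run a peeling argument over a dyadic grid of values of $\delta$.

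\emph{Local control.} Two deterministic facts drive the concentration step. First, for $x\in\mathcal B(0,M)$ and codebooks in $\mathcal B(0,M)^k$ one has the Lipschitz bound $|\gamma(\c,x)-\gamma(\c',x)|\le 4M\|\c-\c'\|$, so every function in the local class is bounded by $4M\sqrt{\kappa_0\delta}$. Second, for $x$ in the interior of $V_i(\c^*)$ lying at distance more than $cM\|\c-\c^*\|/B$ from $N^*$ the nearest code point of $\c$ is $c_i$, whence $\gamma(\c,x)-\gamma(\c^*,x)=\|c_i-c_i^*\|^2-2\langle x-c_i^*,c_i-c_i^*\rangle$; squaring, integrating over the $V_i(\c^*)$'s and summing gives a contribution $\lesssim\|\c-\c^*\|^2R(\c^*)+\|\c-\c^*\|^4$, while on the remaining boundary layer $N^*_{cM\|\c-\c^*\|/B}$ one uses the margin condition~\eqref{majorationkappa} to control its weight and bound that contribution by a term of order $p_{min}\|\c-\c^*\|^3/M$, i.e.\ of lower order in $\delta$. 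Altogether $P(\gamma(\c,\cdot)-\gamma(\c^*,\cdot))^2\le v(\delta)$ with $v(\delta)\lesssim\kappa_0\delta\,R(\c^*)+(\text{terms of higher order in }\kappa_0\delta)$, and more crudely $v(\delta)\le 16M^2\kappa_0\delta$.

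\emph{Concentration and resolution.} Applying Bousquet's inequality to $Z(\delta)$ gives, with probability at least $1-e^{-x}$, $Z(\delta)\le\mathbb E Z(\delta)+\sqrt{2v(\delta)x/n}+\frac{4M\sqrt{\kappa_0\delta}}{3n}x$; the two last terms, split via Young's inequality, produce the $\kappa_0 C_2x/n$ and $C_3x/n$ contributions, $C_3$ being polynomial in $M$ and $\sqrt k$ because the uniform bound on $|\gamma(\c,\cdot)-\gamma(\c^*,\cdot)|$ over the whole ball is $4M\cdot 2\sqrt k M$. The expectation $\mathbb E Z(\delta)$ is bounded by symmetrization: for \eqref{firstinequality} one exploits the affine expansion above together with a refined (Bernstein--type) control of the empirical process of the linear functionals $x\mapsto\langle x-c_i^*,c_i-c_i^*\rangle$ over the finite set $\mathcal M$, yielding a bound of order $\sqrt{\kappa_0\delta\,R(\c^*)\,|\mathcal M|^2/n}$ plus the boundary remainder of order $\sqrt{p_{min}/(Mn)}\,(\kappa_0\delta)^{3/4}$; for \eqref{secondinequality} one instead uses a uniform--entropy bound for nearest--neighbour partitions restricted to a ball, $\log N(\epsilon,\cdot)\lesssim kd\log(1/\epsilon)$, and Dudley's inequality, giving a bound of order $M\sqrt{\kappa_0\delta\,kd\log(4|\mathcal M|\sqrt{kd})/n}$. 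Plugging these into the basic inequality, $\delta:=\ell(\hat{\c}_n,\c^*)$ satisfies, on each dyadic slice, an inequality of the form $\delta\le a\sqrt\delta+b\,\delta^{3/4}+(\text{lower order})$, which, solved as a quadratic in $\sqrt\delta$ with the $\delta^{3/4}$ term absorbed through Young's inequality, produces the announced $\mathcal O(1/n)$ bounds --- the $\kappa_0^3C_1/n^2$ term being exactly the residue $b^4$ of the boundary remainder at this step. A union bound over the $O(\log n)$ slices and the elements of $\mathcal M$, folded into the constants, yields the stated probability $1-2e^{-x}$.

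\emph{Main obstacle.} The technical heart is the variance/expectation computation in the ``Local control'' step: one must split the integrals over the optimal Voronoi cells, locate precisely the $O(\|\c-\c^*\|)$--neighbourhood of $N^*$ on which the affine expansion of $\gamma(\c,\cdot)-\gamma(\c^*,\cdot)$ fails, and use precisely the calibration of the constant $Bp_{min}/(128M^2)$ in Definition~\ref{margincondition} to show that this region contributes only a higher--order term, both in the variance proxy $v(\delta)$ and in the boundary part of $\mathbb E Z(\delta)$. The remaining ingredients --- symmetrization, Bousquet's inequality, the entropy of nearest--neighbour partitions, and the peeling device --- are by now standard; the only other genuine labour is bookkeeping the explicit polynomial dependence of $C_1,C_2,C_3$ on $k$, $d$, $B$ and $M$.
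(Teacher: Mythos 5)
Your proposal follows essentially the same route as the paper: the same basic ERM inequality, Proposition \ref{lienconditionmarge} to localize around $\mathcal{M}$, the decomposition of $\gamma(\c,\cdot)-\gamma(\c^*,\cdot)$ into a linear term (handled by a finite maximum over $\mathcal{M}$ plus Cauchy--Schwarz, giving the $\kappa_0|\mathcal{M}|^2R(\c^*)/n$ term) and a boundary remainder whose margin-calibrated complexity of order $\delta^{3/4}$ yields the $\kappa_0^3/n^2$ residue in \eqref{firstinequality}, together with a $kd\log(4|\mathcal{M}|\sqrt{kd})$ chaining bound for \eqref{secondinequality}. The only substantive difference is presentational: you unpack the localization step into Bousquet's inequality plus dyadic peeling, whereas the paper invokes the packaged localization result (Theorem \ref{localization}, from Blanchard et al.), which is itself proved by exactly that machinery, and it carries out your acknowledged main obstacle via Lemma \ref{boundarycloseness}, the envelope $F_r$ and the covering-number bound of Proposition \ref{chaining}.
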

         
         This result is in line with Theorem 3.1 in \cite{Levrard12} or Theorem 1 in \cite{Chichi13}, concerning the dependency on the sample size $n$ of the loss $\ell(\hat{\c}_n,\c^*)$. The main advance lies in the dependency on other parameters of the loss of $\hat{\c}_n$, which provides a non-asymptotic bound for the excess risk.
        
        In fact,  \eqref{secondinequality} derives from chaining arguments such as one used in \cite{Levrard12} or \cite{Chichi13}, and involves a classical dimension term of $kd$. When considering \eqref{firstinequality}, it seems that this $kd$ term disappears from the dominant term of the upper bound. This suggests that the dimension of the Euclidean space in the finite-dimensional case plays a minor role, as pointed out in Theorem 2.1 in \cite{Biau08}. An open question is to know whether such fast rates bounds can be derived in the infinite dimensional case.
				
				However, \eqref{firstinequality} may be thought of as a semi-asymptotic bound, since it involves a dominant term  and a residual term with respect to the sample size $n$. Although the dependency on other parameters of the dominant term is sharper in \eqref{firstinequality} than in \eqref{secondinequality}, the residual term $C_1/n^2$ in \eqref{firstinequality} still involves the dimension $d$. Consequently, \eqref{firstinequality} only guarantees that $\mathbb{E}\ell(\hat{\c}_n,\c^*)$ can be bounded from above with a dimension-free term when $n$ grows to infinity.
        
        Another interesting point is that Theorem \ref{mainresult} does no require $P$ to have a density, contrary to the requirements of previous results in \cite{Levrard12} or \cite{Chichi13}. This remark makes the link between bounds obtained for point-wise distributions in \cite{Antos04} and bounds for distributions with densities as in \cite{Antos04} or \cite{Levrard12}.
        
        It is also worth mentioning that the dependency in $\varepsilon$ surprisingly turns out to be sharp, as will be shown in Proposition \ref{minimax}. In fact, tuning this separation factor is the core of the demonstration of the minimax results in \cite{Bartlett98} or \cite{Antos05}. 
				
				\subsection{Minimax lower bound}  
											
					Theorem 1 in \cite{Bartlett98} ensures that the minimax convergence rate over the distributions bounded by $M$ of any empirically designed codebook can be bounded from below by $\mathcal{O}(1/\sqrt{n})$. A question of interest is to know whether this lower bound can be refined when considering only distributions satisfying some fast-convergence condition. A partial answer is given by Corollary 2 in \cite{Antos05}, where it is proved that the minimax rate over distributions with continuous densities with individual convergence rate of $\mathcal{O}(1/n)$ for the empirical risk minimizer is still $\mathcal{O}(1/\sqrt{n})$. However, since no non-asymptotic upper bound has been provided for these distributions, to understand which parameter is varying in this minimax result remains a hard issue.
					
					Consequently, this subsection is devoted to obtaining a minimax lower bound on the excess risk over the set of distributions satisfying the margin condition defined in Definition \ref{margincondition}, in which some parameters are fixed. Throughout this subsection, $\hat{\c}_n$ will denote an empirically designed codebook, that is a map from $(\mathbb{R}^d)^n$ to $(\mathbb{R}^d)^k$. Let $k$ be an integer such that $k \geq 3$, and $M>0$. For simplicity, $k$ is assumed to be divisible by $3$. Let us introduce the following quantities:
											\begin{align*}
											\left \{
											\begin{array}{@{}ccc}
											m&=&\frac{2k}{3} \\
											\Delta&=&\frac{15M}{96m^{1/d}}.
											\end{array}
											\right.
											\end{align*}
											
											To focus on the dependency on the separation factor $\varepsilon$, the quantities involved in Definition \ref{margincondition} are fixed as:
											\begin{align}\label{conditionmargeminimax}
											\left \{
											\begin{array}{@{}ccc}
											B&=&\Delta \\
											r_0&=& \frac{7 \Delta}{16}\\
											p_{min} &\geq& \frac{1}{2k}.
											\end{array}
											\right .
											\end{align}
     Denote by $\mathcal{D}(\varepsilon)$ the set of probability distributions which are $\varepsilon$-separated, and which satisfies a margin condition with parameters defined in \eqref{conditionmargeminimax}. The minimax result is the following:
		\begin{prop}\label{minimax}
		Assume that $k \geq 3$. Then, for any empirically designed codebook,
		\begin{align*}
		\mathbb{E} \sup_{P \in \mathcal{D}(c_1/\sqrt{n})} \ell(\hat{\c}_n,\c^*) \geq c_0 M^2 \frac{\sqrt{k^{1-\frac{4}{d}}}}{\sqrt{n}},
		\end{align*}
		where $c_0$ is an absolute constant, and
		\[
		c_1 = \frac{(15M)^2}{4(96m^{\frac{1}{4}+\frac{1}{d}})^2}.
		\]
		\end{prop}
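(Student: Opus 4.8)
The plan is to follow the classical scheme for quantization minimax lower bounds, introduced in \cite{Bartlett98} and refined in \cite{Antos05}: I would exhibit a finite, hypercube-indexed subfamily $\{P_\sigma : \sigma \in \{-1,1\}^m\} \subset \mathcal{D}(c_1/\sqrt{n})$ of ``hard'' distributions and apply Assouad's lemma. Each $P_\sigma$ is supported near $m$ centres forming a maximal $\Delta$-separated set inside $\mathcal{B}(0,M)$; a volumetric packing estimate shows such a configuration exists with spacing of order $M m^{-1/d}$, and the numerical constant is pinned at $\Delta = 15M/(96 m^{1/d})$ precisely so that the three requirements of \eqref{conditionmargeminimax} hold with room to spare. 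Out of the $k = 3m/2$ available code points, $m$ describe the centres and the remaining $k-m = m/2$ are ``spare''; inside each region I would place two point masses (smoothed into thin uniform blobs so that the Kullback--Leibler divergences below stay finite) carrying a small asymmetry of order $u$ in their weights, governed by the bit $\sigma_i$. The within-region geometry would be calibrated so that: $P_\sigma$ is $M$-bounded; $\mathcal{M} = \{\c^*_\sigma\}$ is a singleton; every other stationary codebook is obtained by reallocating the spare points, the cheapest such move flipping exactly one bit at excess-risk cost exactly $\varepsilon = c_1/\sqrt{n}$; and the margin condition holds with the parameters of \eqref{conditionmargeminimax}, where $B=\Delta$ forces the two within-region masses to sit at mutual distance $\geq \Delta$, while $r_0 = 7\Delta/16$ and $p_{\min}\geq 1/(2k)$ follow from the inter-region separation and the fact that each region carries mass $1/m = 3/(2k)$.

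\emph{From the loss to a Hamming distance, then Assouad.} Since the $m$ regions lie at pairwise distance $\geq \Delta$ while $P_\sigma$ is $M$-bounded, the excess risk of any $\c \in \mathcal{B}(0,M)^k$ can be lower bounded region by region. Decoding $\c$ into $\hat\sigma(\c)\in\{-1,1\}^m$ by recording, in each region, which of the two configurations $\c$ lies closest to (a region to which $\c$ devotes too few code points being decoded arbitrarily), a parallel-axis computation as in the discussion preceding \eqref{Betpmin} shows that region $i$ contributes at least $\varepsilon$ to $\ell(\c,\c^*_\sigma)$ whenever $\hat\sigma_i(\c)\neq\sigma_i$, hence $\ell(\c,\c^*_\sigma) \geq \varepsilon\,\rho\big(\hat\sigma(\c),\sigma\big)$ for the Hamming distance $\rho$. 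Combined with Assouad's lemma this gives
\begin{align*}
\inf_{\hat{\c}_n}\ \sup_{\sigma}\ \mathbb{E}_\sigma\, \ell(\hat{\c}_n,\c^*_\sigma) \ \geq\ \varepsilon\,\frac{m}{2}\Big(1 - \max_{\rho(\sigma,\sigma')=1}\big\|P_\sigma^{\otimes n} - P_{\sigma'}^{\otimes n}\big\|_{\mathrm{TV}}\Big).
\end{align*}
By Pinsker's inequality and tensorization, $\|P_\sigma^{\otimes n} - P_{\sigma'}^{\otimes n}\|_{\mathrm{TV}}^2 \leq \tfrac{n}{2}\,\mathrm{KL}(P_\sigma,P_{\sigma'})$; and since two Hamming-adjacent laws differ only inside the single flipped region (mass $1/m$) and there only by a weight perturbation of order $u$, $\mathrm{KL}(P_\sigma,P_{\sigma'}) \lesssim u^2/m$, so the total-variation term is $\lesssim u\sqrt{n/m}$.

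\emph{Calibration and conclusion.} Choosing the asymmetry $u \asymp \sqrt{m/n}$ (small for large $n$, which is also what keeps $p_{\min}\geq 1/(2k)$) makes the total-variation term a fixed constant strictly below $1$. With this choice the per-region cost of a wrong bit is of order $\Delta^2 u/m \asymp \Delta^2/\sqrt{mn}$, and substituting $\Delta = 15M/(96 m^{1/d})$ this equals $c_1/\sqrt{n}$ with $c_1 = \Delta^2/(4\sqrt m) = (15M)^2/\big(4(96\,m^{1/4+1/d})^2\big)$ --- the consistency check that fixes the remaining constants and puts every $P_\sigma$ in $\mathcal{D}(c_1/\sqrt n)$. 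Plugging back, $\sup_\sigma \mathbb{E}_\sigma\, \ell(\hat{\c}_n,\c^*_\sigma) \gtrsim \varepsilon\, m \asymp \Delta^2\sqrt{m/n} \asymp M^2 m^{1/2-2/d}/\sqrt n$, and since $m = 2k/3$ this is $c_0 M^2 \sqrt{k^{1-4/d}}/\sqrt n$ for an absolute $c_0$; a fortiori the same bound holds for $\mathbb{E}\sup_{P\in\mathcal{D}(c_1/\sqrt n)}\ell(\hat{\c}_n,\c^*)$.

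The main obstacle is the construction itself: producing a \emph{single} family that simultaneously realises the three prescribed constants of \eqref{conditionmargeminimax}, keeps $\mathcal{M}$ a singleton with all competing stationary points being spare-point reallocations, and has separation factor \emph{exactly} $c_1/\sqrt n$. This forces one to solve the within-region geometry in closed form --- it is where the numbers $15/96$ and $7/16$ come from --- and to handle the dependence on $n$ (through $u$ and $\varepsilon$) carefully. The smoothing needed to keep $\mathrm{KL}(P_\sigma,P_{\sigma'})$ finite and the packing estimate for $\Delta$ are secondary, routine points.
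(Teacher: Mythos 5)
Your overall skeleton is the paper's: a Bartlett--Linder--Lugosi-type hypercube of $m=2k/3$ two-blob regions at spacing $\Delta=15M/(96m^{1/d})$, an Assouad argument, calibration $\delta\asymp\sqrt{m/n}$, separation $c_1=\Delta^2/(4\sqrt m)$, and the final arithmetic $M^2 m^{1/2-2/d}/\sqrt n$. But the one place where you commit to a concrete design is where the sketch breaks, and the two halves of your argument are calibrated to two different, incompatible constructions. As written, you put the asymmetry of order $u$ \emph{inside} each region, in the relative weights of its two point masses, keeping each region's total mass at $1/m$; that is the only reading under which your Assouad step over the full hypercube with single-bit flips and your bound $\mathrm{KL}(P_\sigma,P_{\sigma'})=O(u^2/m)$ are even well posed, since only then is $P_\sigma$ a probability measure for every $\sigma\in\{-1,1\}^m$. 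Under that reading, however, the benefit of devoting a second code point to region $i$ is $\frac{\Delta^2}{4m}(1-u^2)$ \emph{independently of} $\sigma_i$: the allocation of the $m/2$ spare points is insensitive to $\sigma$ at first order, $\mathcal{M}$ is not a singleton (all $\binom{m}{m/2}$ allocations tie), and a wrong bit costs only $O(\Delta^2u^2/m)$ rather than your claimed $\Delta^2u/m$. With $u\asymp\sqrt{m/n}$ this produces a bound of order $1/n$, not the required $1/\sqrt n$, so the calibration collapses.

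What the paper does (and what your later sentences about heavy/light regions and spare-point reallocation implicitly presuppose) is to put the asymmetry in the \emph{total} mass of each region, $P_\sigma(U_i)=P_\sigma(U_i')=(1+\sigma_i\delta)/(2m)$, so that the object to be learned is which regions deserve two code points; then the per-bit cost is genuinely first order, $R(Q_{\sigma'},P_\sigma)-R(Q_\sigma,P_\sigma)=\frac{\Delta^2\delta}{8m}\rho(\sigma,\sigma')$ (Lemma \ref{Assouadtechnique}). But this forces $\sum_i\sigma_i=0$, both for normalization and so that $Q_\sigma$ uses exactly $k$ points; single-bit flips leave that set, so Assouad cannot be applied over the full hypercube as in your display. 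The paper flips bits in antipodal pairs via $\tau\in\{-1,+1\}^{m/2}$ with $\sigma_{i+m/2}(\tau)=-\sigma_i(\tau)$ and bounds $H^2\bigl(P^{\otimes n}_{\sigma(\tau)},P^{\otimes n}_{\sigma(\tau')}\bigr)\le 4n\delta^2/m$ for $\rho(\tau,\tau')=2$ (Hellinger affinity rather than your Pinsker/KL route; either works). Finally, your ``decode each region and count wrong bits'' reduction is asserted but is the real technical content: the paper proves it as Proposition \ref{definitionDelta}, showing any quantizer is dominated, simultaneously for all $P_{\sigma'}$, by a structured $Q_\sigma$, before invoking the exact risk identity above. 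So: right scheme, but the asymmetry must be moved from within regions to between regions, and the Assouad step must be run on the balanced subcube via paired flips.
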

		    Proposition \ref{minimax} can be thought of as an extension of Theorem 1 in \cite{Bartlett98}. This minimax lower bound has to be compared to the upper risk bound obtained in Theorem \ref{mainresult} for the empirical risk minimizer $\hat{\c}_n$ over the set of distributions $\mathcal{D}(c_1/\sqrt{n})$. To be more precise, Theorem \ref{mainresult} ensures that, provided that $n$ is large enough, 
				\[
				\mathbb{E} \sup_{P \in \mathcal{D}(c_1/\sqrt{n})} \leq \frac{g(k,d,M)}{\sqrt{n}},
				\]
				where $g(k,d,M)$ depends only on $k$, $d$ and $M$. In other words, the dependency of the upper bounds stated in Theorem \ref{mainresult} on $\varepsilon$ turns out to be sharp whenever $\varepsilon \sim n^{-\frac{1}{2}}$. Unfortunately, Proposition \ref{minimax} can not be easily extended to the case where $\varepsilon \sim n^{-\alpha}$, with $0<\alpha<1/2$. Consequently an open question is whether the upper bounds stated in Theorem \ref{mainresult} remains accurate with respect to $\varepsilon$ in this case.

       \subsection{Quasi-Gaussian mixture example}\label{Gaussianmixture}

                The aim of this subsection is to illustrate the results offered in Section \ref{Results} with Gaussian mixtures in dimension $d=2$. The Gaussian mixture model is a typical and well-defined clustering example. However we will not deal with the clustering issue but rather with its theoretical background.

                In general, a Gaussian mixture distribution $\tilde{P}$ is defined by its density
                 \[
                 \tilde{f}(x) = \sum_{i=1}^{\tilde{k}}{\frac{\theta_i}{2 \pi  \sqrt{ \left | \Sigma_i \right | }}e^{-\frac{1}{2}(x-m_i)^t \Sigma_i^{-1} (x-m_i)}},
                 \]
                where $\tilde{k}$ denotes the number of component of the mixture, and the $\theta_i$'s denote the weights of the mixture, which satisfy $\sum_{i=1}^{k}{\theta_i} = 1$. Moreover, the $m_i$'s denote the means of the mixture, so that $m_i$ $\in$ $ \mathbb{R}^2$, and the $\Sigma_i$'s are the $2\times 2$ variance matrices of the components.

                We restrict ourselves to the case where the number of components $\tilde{k}$ is known, and match the size $k$ of the codebooks. To ease the calculation, we make the additional assumption that every component has the same diagonal variance matrix $\Sigma_i = \sigma^2 I_2$. Note that a  similar result to Proposition \ref{Gaussianboundary} can be derived for distributions with different variance matrices $\Sigma_i$, at the cost of more computing.

                 Since the support of a Gaussian random variable is not bounded, we define the ``quasi-Gaussian" mixture model as follows, truncating each Gaussian component.
                 Let the density $f$ of the distribution $P$ be defined by
                 \[
                 f(x) = \sum_{i=1}^{k}{\frac{\theta_i}{ 2 \pi \sigma^2 N_i} e^{-\frac{\|x-m_i\|^2}{2 \sigma^2}}}\mathbbm{1}_{\mathcal{B}(0,M)},
                 \]
 where $N_i$ denotes a normalization constant for each Gaussian variable.

 To ensure this model to be close to the Gaussian mixture model, we assume that there exists a constant $\varepsilon \in \left [0, 1\right ]$ such that, for $i=1, \hdots, k$, $N_i \geq 1 - \varepsilon$.

 Denote by $\tilde{B} = {\inf_{i \neq j}}{\|m_i - m_j\|}$ the smallest possible distance between two different means of the mixture. To avoid boundary issues we assume that, for all $i = 1, \hdots, k$, $\mathcal{B}(m_i, \tilde{B}/3) \subset \mathcal{B}(0,M)$.
 
 It is worth noticing that the two assumptions $N_i \geq 1 - \varepsilon$ and $\mathcal{B}(m_i, \tilde{B}/3) \subset \mathcal{B}(0,M)$ can easily be satisfied as soon as $M$ is chosen large enough. For such a model, Proposition \ref{Gaussianboundary} offers a sufficient condition for $P$ to satisfy a margin condition.

                 \begin{prop}\label{Gaussianboundary}
                 Let $\theta_{min} = \min_{i=1,\hdots,k}{\theta_i}$, and $\theta_{max}=\max_{i=1,\hdots,k}{\theta_i}$. Assume that
                 \begin{align}\label{gaussiancondition}
                 \frac{\theta_{min}}{\theta_{max}} \geq \max {\left(\frac{2048 k \sigma^2}{(1-\varepsilon) \tilde{B}^2(1 - e^{-\tilde{B}^2/{2048\sigma^2}})},  \frac{2048 k^2 M^3}{(1 - \varepsilon)7\sigma^2 \tilde{B}(e^{\tilde{B}^2/{32\sigma^2}}-1)}\right ) }.
                 \end{align}
                 Then $P$ satisfies a margin condition with radius $\frac{\tilde{B}}{8}$.
                 \end{prop}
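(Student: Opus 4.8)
The plan is to verify the three items of Definition \ref{margincondition} with $r_0 = \tilde{B}/8$. Item $i)$ is immediate, since $f$ is supported on $\mathcal{B}(0,M)$. The heart of the argument is a \emph{localization} step showing that the first term in the maximum of \eqref{gaussiancondition} forces every optimal codebook to have exactly one code point in each ball $\mathcal{B}(m_i, \tilde{B}/16)$. To get this, first bound $R(\c^*)$ from above by $R(m_1,\dots,m_k)$, which a direct Gaussian moment computation (using $N_i \geq 1-\varepsilon$) shows to be strictly below $2\sigma^2/(1-\varepsilon)$. Conversely, if some ball $\mathcal{B}(m_j, \tilde{B}/16)$ contained no code point of an optimal $\c^*$, then every point of $\mathcal{B}(m_j, \tilde{B}/32)$ would lie at squared distance at least $(\tilde{B}/32)^2$ from $\c^*$, so, bounding $f$ from below by its $j$-th component and $N_j \leq 1$, $R(\c^*) \geq (\tilde{B}^2/1024)\,\theta_{min}(1 - e^{-\tilde{B}^2/2048\sigma^2})$. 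Since $k\theta_{max} \geq 1$, the first term of \eqref{gaussiancondition} makes this exceed $2\sigma^2/(1-\varepsilon)$, a contradiction. As the $k$ balls $\mathcal{B}(m_i, \tilde{B}/16)$ are pairwise disjoint, each must then contain exactly one of the $k$ code points, which, after relabeling, reads $c_i^* \in \mathcal{B}(m_i, \tilde{B}/16)$ for every $\c^* \in \mathcal{M}$.

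Next I would read off the geometric consequences of this localization. First, $\|c_i^* - c_j^*\| \geq \|m_i - m_j\| - \tilde{B}/8 \geq 7\tilde{B}/8$, so $B \geq 7\tilde{B}/8$. Second, a two-line triangle-inequality check gives $\mathcal{B}(m_i, \tilde{B}/4) \subset V_i(\c^*)$, whence $P(V_i(\c^*)) \geq \theta_{min}(1 - e^{-\tilde{B}^2/32\sigma^2})$ and $p_{min} \geq \theta_{min}(1 - e^{-\tilde{B}^2/32\sigma^2})$. Third, any $x \in N(\c^*)$ is equidistant from two code points and no closer to any other, so $2\|x - c_i^*\| \geq B \geq 7\tilde{B}/8$ for that pair and $\|x - c_\ell^*\| \geq 7\tilde{B}/16$ for all $\ell$; combined with the localization this yields $d(x, \{m_1,\dots,m_k\}) \geq 3\tilde{B}/8$ for every $x \in N^*$, hence $d(x, \{m_1,\dots,m_k\}) \geq \tilde{B}/4$ for every $x \in N^*_t$ with $t \leq r_0 = \tilde{B}/8$.

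To handle the cardinality of $\mathcal{M}$ (item $ii)$) and the size of $N^*$, I would argue that on $\prod_i \mathcal{B}(m_i, \tilde{B}/16)$ the centroid map $\c \mapsto (\eta_1(\c),\dots,\eta_k(\c))$ — whose fixed points contain every stationary codebook, in particular all of $\mathcal{M}$ — is a contraction: moving $\c$ within these balls changes each cell $V_i(\c)$ only inside $N(\c)$, which lies at distance $\geq 3\tilde{B}/8$ from all the means, where $f$ is exponentially small, so the numerator and denominator defining $\eta_i(\c)$ barely move. Hence there is a unique stationary codebook up to relabeling, so $\mathcal{M}$ is finite and $N^*$ is the boundary $N(\c^*)$ of a single $\c^*$; being a union of at most $\binom{k}{2}$ line segments inside $\mathcal{B}(0,M)$, its neighborhood satisfies $\mathrm{vol}(N^*_t) \leq C k^2 M t$ for $t \leq M$, with $C$ absolute. (One could instead keep an explicit $|\mathcal{M}|$ factor here and absorb it into the exponential at the end; the contraction argument is what removes it and recovers the $k^2$ rather than a higher power.)

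Finally I would assemble the bound on $p(t) = P(N^*_t)$ for $t \leq \tilde{B}/8$. On $N^*_t$ one has $f(x) \leq \frac{\theta_{max}}{2\pi\sigma^2(1-\varepsilon)} \sum_i e^{-\|x-m_i\|^2/2\sigma^2}$, and since $x$ is at distance $\geq \tilde{B}/4$ from its nearest mean and $\geq 3\tilde{B}/4$ from all the others (the means being pairwise $\geq \tilde{B}$ apart), the standing condition — which already forces $\tilde{B}^2/\sigma^2 \gtrsim \log k$ — gives $\sum_i e^{-\|x-m_i\|^2/2\sigma^2} \leq 2 e^{-\tilde{B}^2/32\sigma^2}$. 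Combining with the volume bound, $p(t) \leq \frac{C k^2 M}{\pi\sigma^2(1-\varepsilon)}\,\theta_{max}\, e^{-\tilde{B}^2/32\sigma^2}\, t$, while from the second paragraph $\frac{B p_{min}}{128 M^2} t \geq \frac{7\tilde{B}\,\theta_{min}(1 - e^{-\tilde{B}^2/32\sigma^2})}{1024 M^2}\, t$. Using $1 - e^{-\tilde{B}^2/32\sigma^2} = e^{-\tilde{B}^2/32\sigma^2}(e^{\tilde{B}^2/32\sigma^2} - 1)$, the required inequality $p(t) \leq \frac{B p_{min}}{128 M^2} t$ reduces to exactly the second term in the maximum of \eqref{gaussiancondition}, the absolute constant $C$ fixing the numerical factor $2048$. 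The main obstacle is the localization-plus-uniqueness step of the first three paragraphs: once the code points are pinned near the means and $\mathcal{M}$ is understood, the density estimate and the final bookkeeping are routine.
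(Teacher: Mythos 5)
Your proposal follows essentially the same route as the paper's proof: localize every optimal code point within $\tilde{B}/16$ of a mean by comparing $R(\mathbf{m})$ with the lower bound forced by an uncovered mean (first term of \eqref{gaussiancondition}), deduce that $N^*_t$ stays at distance at least $\tilde{B}/4$ from the means for $t \leq \tilde{B}/8$, and combine a density bound there with a linear-in-$t$ volume bound and the estimates $B \geq 7\tilde{B}/8$, $p_{min} \geq \theta_{min}(1-e^{-\tilde{B}^2/32\sigma^2})$, so that the second term of \eqref{gaussiancondition} closes the computation; your intermediate constants differ only immaterially from the paper's. The contraction argument you sketch to obtain finiteness of $\mathcal{M}$ is an addition the paper does not carry out (its proof only establishes boundedness and the weight bound), and it is the only part of your argument left at the level of a sketch rather than a verified estimate.
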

                  The condition \eqref{gaussiancondition} can be decomposed as follows. If
                  \[
                  \frac{\theta_{min}}{\theta_{max}} \geq \frac{2048 k \sigma^2}{(1-\varepsilon) \tilde{B}^2(1 - e^{-\tilde{B}^2/{2048\sigma^2}})},
                  \]
                   then the optimal codebook $\c^*$ is close to the vector of means of the mixture $\mathbf{m} = (m_1, \hdots, m_k)$. Therefore, it is possible to locate the critical region associated with the optimal codebook $\c^*$, and to derive an upper bound on the weight function defined in Definition \ref{margincondition}. This leads to the second term of the maximum in  \eqref{gaussiancondition}.

                  This condition can be interpreted as a condition on the polarization of the mixture. A favorable case for vector quantization seems to be when the poles of the mixtures are well separated, which is equivalent to $\sigma$ is small compared to $\tilde{B}$, when considering Gaussian mixtures. Proposition \ref{Gaussianboundary} gives details on how $\sigma$ has to be small compared to $\tilde{B}$, in order to satisfy the requirements of Proposition \ref{lienconditionmarge}. This ensures that the loss $\ell(\hat{\c}_n,\c^*)$ reaches an improved convergence rate of $1/n$.

                  It may be noticed that Proposition \ref{Gaussianboundary} offers almost the same condition than Proposition 4.2 in \cite{Levrard12}. In fact, since the Gaussian mixture distributions have a continuous density, making use of \eqref{continuousdensity} in Example 3 ensures that the margin condition for Gaussian mixtures is equivalent to a bound on the density over the critical region. 

                  It is important to note that this result is valid when $k$ is known and match exactly the number of components of the mixture. When the number of code points $k$ is different from the number of components $\tilde{k}$ of the mixture, we have no general idea of where the optimal code points can be located.

                   Moreover, suppose that there exists only one optimal codebook $\c^*$, up to reindexing, and that we are able to locate this optimal codebook $\c^*$. As mentioned in Proposition \ref{lienconditionmarge}, the key quantity is in fact $B = \inf_{i\neq j}\|c^*_i - c^*_j\|$. In the case where $\tilde{k} \neq k$, there is no simple relation between $\tilde{B}$ and $B$.  Consequently, a condition like in Proposition \ref{Gaussianboundary} could not involve the natural parameter of the mixture $\tilde{B}$.

                   It is also worth pointing out that there exist cases where the set of optimal codebooks is not finite. For example, assume that $P$ is a truncated rotationally symmetric Gaussian distribution, and $k=2$. Since every rotation of an optimal codebook leads to another optimal codebook, there exists an infinite set of optimal codebooks. Since, in this case, $N^* = \mathcal{B}(0,M)$, obviously $P$ does not satisfy a margin condition.

         \section{Proofs}\label{Proofs}
         
         \subsection{Proof of Proposition \ref{lienconditionmarge}}\label{Proof of Proposition {lienconditionmarge}} 
         The proof of Proposition \ref{lienconditionmarge} is based on the following lemma.
         \begin{lem}\label{boundarycloseness}
      Let $x \in V_i(\c^*) \cap V_j(\c)$, for $i \neq j$. Then  
      \begin{align}  
      \left | \left\langle x - \frac{c_i + c_j}{2}, c_i - c_j\right\rangle \right | & \leq   4 \sqrt{2}M \| \c - \c^* \| \label{Vor1} , \\
      d(x, \partial V_i(\c^*)) & \leq  \frac{4 \sqrt{2} M}{B} \| \c - \c^* \| \label{Vor2}.
      \end{align}
\end{lem}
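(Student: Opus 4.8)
The plan is to reduce both inequalities to elementary Euclidean geometry, using that each Voronoi membership is a pair of inequalities between squared distances and that, since $P$ is $M$-bounded, it suffices to treat $x$ with $\|x\|\le M$. Two facts will be used repeatedly: the polarization identity $\langle x-\tfrac{a+b}{2},\,a-b\rangle=\tfrac12(\|x-b\|^2-\|x-a\|^2)$, and $\|c_i-c_i^*\|+\|c_j-c_j^*\|\le\sqrt2\,\|\c-\c^*\|$ (Cauchy--Schwarz, since $\|c_i-c_i^*\|^2+\|c_j-c_j^*\|^2\le\|\c-\c^*\|^2$).

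For \eqref{Vor1}: by polarization the quantity to bound equals $\tfrac12|\|x-c_j\|^2-\|x-c_i\|^2|=\tfrac12(\|x-c_i\|-\|x-c_j\|)(\|x-c_i\|+\|x-c_j\|)$, where the first factor is non-negative because $x\in V_j(\c)$ forces $\|x-c_j\|\le\|x-c_i\|$. The second factor is at most $4M$ since $\|x\|,\|c_i\|,\|c_j\|\le M$. For the first factor, write $\|x-c_i\|\le\|x-c_i^*\|+\|c_i-c_i^*\|$ and $\|x-c_j\|\ge\|x-c_j^*\|-\|c_j-c_j^*\|$; since $x\in V_i(\c^*)$ gives $\|x-c_i^*\|\le\|x-c_j^*\|$, subtracting yields $0\le\|x-c_i\|-\|x-c_j\|\le\|c_i-c_i^*\|+\|c_j-c_j^*\|\le\sqrt2\,\|\c-\c^*\|$. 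Multiplying the two bounds gives \eqref{Vor1} (in fact with the smaller constant $2\sqrt2\,M$).

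For \eqref{Vor2}: I would run the symmetric computation with the optimal code points. Since $x\in V_i(\c^*)$, polarization gives $\langle x-\tfrac{c_i^*+c_j^*}{2},\,c_i^*-c_j^*\rangle=\tfrac12(\|x-c_j^*\|^2-\|x-c_i^*\|^2)\ge 0$, and $x\in V_j(\c)$ together with the triangle inequality (as above) gives $\|x-c_j^*\|-\|x-c_i^*\|\le\|c_i-c_i^*\|+\|c_j-c_j^*\|\le\sqrt2\,\|\c-\c^*\|$; hence $|\langle x-\tfrac{c_i^*+c_j^*}{2},\,c_i^*-c_j^*\rangle|\le 2\sqrt2\,M\|\c-\c^*\|$. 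Dividing by $\|c_i^*-c_j^*\|\ge B$, this is precisely the statement that $x$ lies within distance $\tfrac{2\sqrt2\,M}{B}\|\c-\c^*\|$ of the bisecting hyperplane $H_{ij}^*=\{y:\ \|y-c_i^*\|=\|y-c_j^*\|\}$. It then remains to turn closeness to $H_{ij}^*$ into closeness to $\partial V_i(\c^*)$: letting $x'$ be the orthogonal projection of $x$ onto $H_{ij}^*$, one has $\|x-x'\|=d(x,H_{ij}^*)$; if $x'\in V_i(\c^*)$ then $\|x'-c_i^*\|=\|x'-c_j^*\|$ together with $x'\in V_i(\c^*)$ forces $x'\in V_i(\c^*)\cap V_j(\c^*)\subset\partial V_i(\c^*)$, while if $x'\notin V_i(\c^*)$ then, $x$ being in the closed set $V_i(\c^*)$, the last point of the segment $[x,x']$ lying in $V_i(\c^*)$ belongs to $\partial V_i(\c^*)$ and is at distance at most $\|x-x'\|$ from $x$. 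Either way $d(x,\partial V_i(\c^*))\le\|x-x'\|\le\tfrac{2\sqrt2\,M}{B}\|\c-\c^*\|$, which gives \eqref{Vor2}.

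I expect the only genuinely non-computational step to be this last reduction in \eqref{Vor2}: checking that either $x'$, or a point of the segment $[x,x']$, actually lies on $\partial V_i(\c^*)$. It relies on $V_i(\c^*)$ being a closed convex polytope whose interior is the open Voronoi cell $\overset{o}{V}_i(\c^*)$ --- so a point of $V_i(\c^*)$ equidistant from $c_i^*$ and $c_j^*$ cannot be interior, which yields $V_i(\c^*)\cap V_j(\c^*)\subset\partial V_i(\c^*)$ --- together with a connectedness argument along the segment in the case $x'\notin V_i(\c^*)$. Everything else is a routine chain of triangle inequalities, with the boundedness $\|x\|\le M$ producing the factor $M$ and the separation $\|c_i^*-c_j^*\|\ge B>0$ (guaranteed, e.g., by the centroid condition forcing optimal code points to be distinct) producing the factor $1/B$.
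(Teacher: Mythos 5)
Your proof is correct and follows essentially the same route as the paper: both arguments combine the two Voronoi memberships with the triangle inequality to control the difference of squared distances, and then read \eqref{Vor2} off the distance to the bisecting hyperplane $h^*_{i,j}$ via $d(x,\partial V_i(\c^*))\le d(x,h^*_{i,j})$. Your variant (factoring the difference of squares instead of squaring a triangle-inequality bound) even yields the slightly sharper constant $2\sqrt{2}M$, and you make explicit the projection/segment argument for $d(x,\partial V_i(\c^*))\le d(x,h^*_{i,j})$ that the paper only asserts.
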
 

    The two statements of Lemma \ref{boundarycloseness} emphasize the fact that, provided that $\c$ and $\c^*$ are quite similar, the areas on which the label may differ with respect to $\c$ and $\c^*$ should be close to the boundary of Voronoi diagrams. This idea is mentioned in the proof of Corollary 1 in \cite{Antos04}. Nevertheless we provide here a simpler proof.
    
    \begin{proof}[Proof of Lemma \ref{boundarycloseness}] 
    Let $x \in V_i(\c^*) \cap V_j(\c)$, then$ \| x - c_j \|^2 \leq \| x - c_i \|^2$, which leads to $\left\langle c_i - c_j, x - \frac{c_i +c_j}{2} \right\rangle \leq 0$.  
    Since $\| x - c_i^* \| \leq \| x - c_j^* \|$, we may write
    \[
    \| x - c_i \| \leq \| x - c_j \| + \| c_i - c_i^*\| + \|c_j - c_j^*\|.
    \]
    Taking square on both sides leads to
    \begin{align*}
    \| x - c_i \|^2 - \| x-c_j\|^2 & \leq \begin{multlined}[t] 2 \| x - c_j \| ( \| c_i - c_i^*\| + \|c_j - c_j^*\| ) \\ + \left ( \| c_i - c_i^*\| + \|c_j - c_j^*\| \right )^2 \end{multlined} \\
                                   & \leq 8M  ( \| c_i - c_i^*\| + \|c_j - c_j^*\| ) \\
                                   & \leq 8 \sqrt{2}M \| \c - \c^* \|.
    \end{align*}
             Since $ \| x - c_i \|^2 - \| x-c_j\|^2 = 2 \left\langle x - \frac{c_i + c_j}{2}, c_i - c_j\right\rangle $, \eqref{Vor1} is proved.
             
             To prove \eqref{Vor2}, remark that, since $x \in V_i(\c^*)$, $d(x,V_i(\c^*)) \leq d(x,h^*_{i,j})$, where $h^*_{i,j}$ is the hyperplane defined by $\left \{x \in \mathcal{B}(0,M)| \| x - c_i^*\| = \|x - c_j^*\| \right \}$. Using quite simple geometric arguments, we deduce that
             \[
             d(x,h^*_{i,j}) = \left |\left\langle x - \frac{c_i^* + c_j^*}{2}, \frac{c_i^* - c_j^*}{\left \| c_i^* - c_j^* \right \|}\right\rangle \right |.
             \]
             The same arguments as in the proof of \eqref{Vor1} guarantee that
             \[
             \begin{aligned}
            \left |\left\langle x - \frac{c_i^* + c_j^*}{2}, \frac{c_i^* - c_j^*}{\left \| c_i^* - c_j^* \right \|}\right \rangle \right | & = \left\langle x - \frac{c_i^* + c_j^*}{2}, \frac{c_i^* - c_j^*}{\left \| c_i^* - c_j^* \right \|}\right\rangle \\
              & \leq \frac{4 \sqrt{2}M}{B} \| \c - \c^* \|.
              \end{aligned}
              \]
\end{proof}

Equipped with Lemma \ref{boundarycloseness}, we are in a position to prove Proposition \ref{lienconditionmarge}. 
Let $\c \in \mathcal{M}$, and $(W_1(\c), \hdots, W_k(\c))$  be a Voronoi partition associated to $\c$, as defined in Section \ref{Notation}. Then $\ell(\c,\c^*)$ can be decomposed as follows:
        \[
        \begin{aligned}
        P \gamma(\c,.) &=\sum_{i=1}^{k}{P(\| x - c_i \|^2 \mathbbm{1}_{W_i(\c)})} \\
        & = \sum_{i=1}^{k}{P(\| x - c_i \|^2 \mathbbm{1}_{V_i(\c^*)})} + \sum_{i=1}^{k}{P(\|x-c_i\|^2(\mathbbm{1}_{W_i(\c)}-\mathbbm{1}_{V_i(\c^*)}))}.
        \end{aligned}
        \]
        
        Since, for all $i = 1,\hdots k$, $P(x\mathbbm{1}_{V_i(\c^*)}(x)) = P(V_i(\c^*))c_i^*$ (centroid condition), we may write
        \[
        P(\| x - c_i \|^2 \mathbbm{1}_{V_i(\c^*)}) = P(V_i(\c^*)) \|c_i - c_i^*\|^2 + P( \| x - c_i^* \|^2 \mathbbm{1}_{V_i(\c^*)}),
        \]
        from which we deduce
        \[
        P \gamma(\c,.) = P \gamma(\c^*,.) + \sum_{i=1}^{k}{P(V_i(\c^*)) \|c_i - c_i^*\|^2} + \sum_{i=1}^{k}{P(\|x-c_i\|^2(\mathbbm{1}_{W_i(\c)}-\mathbbm{1}_{V_i(\c^*)}))},
        \]
        which leads to
        \[
        \ell(\c,\c^*) \geq p_{min} \|\c - \c^*\|^2 + \sum_{i=1}^{k}{ \sum_{j \neq i} {P\left ((\|x-c_j\|^2 - \|x-c_i\|^2)\mathbbm{1}_{V_i(\c^*)\cap W_j(\c)} \right )}}.
        \]
        
        Since $x\in W_j(\c) \subset V_j(\c)$, $\|x-c_j\|^2 - \|x-c_i\|^2 \leq 0$. Thus it remains to bound from above
        \[
 \sum_{i=1}^{k}{ \sum_{j \neq i} {P\left ((\|x-c_i\|^2 - \|x-c_j\|^2)\mathbbm{1}_{V_i(\c^*)\cap W_j(\c)} \right )}}.
        \]
Noticing that
\[
\|x-c_i\|^2 - \|x-c_j\|^2 = 2 \left\langle c_j -c_i, x_{i,j} - \frac{c_i+c_j}{2} \right\rangle,
\]
and using Lemma \ref{boundarycloseness}, we get
\[
 \sum_{i=1}^{k}{P(\|x-c_i\|^2(\mathbbm{1}_{W_i(\c)}-\mathbbm{1}_{V_i(\c^*)}))} \geq - 8 \sqrt{2}M \| \c - \c^* \| p \left ( \frac{4 \sqrt{2}M}{B}\| \c - \c^* \| \right ).
 \]
 
 Consequently, if $P$ satisfies \eqref{majorationkappa}, then, if $\| \c - \c^* \| \leq  \frac{B r_0}{4\sqrt{2}M}$, 
 \[
 \ell(\c,\c^*) \geq \frac{p_{min}}{2}\| \c - \c^*\|^2.
 \] 
 Now turn to the case where $\| \c - \c^*(\c) \| \geq  \frac{B r_0}{4\sqrt{2}M}$. Since the support of $P$ is included on $\mathcal{B}(0,M)$, the function $\c \longmapsto P\gamma(\c,.)$ is continuous , its minimum on $(\mathbb{R}^d)^k \cap \left ( \bigcup_{\c^* \in \mathcal{M}}{\mathcal{B}(0,M)} \right )^{c}$ is attained. Such a minimizer is a local minimizer, or is at the boundary $\| \c - \c^*(\c) \| = \frac{B r_0}{4\sqrt{2}M}$. Hence we deduce 
 \[
 \begin{aligned}
 \ell(\c,\c^*) & \geq \varepsilon \wedge \frac{p_{min} B r_0 ^2}{64 M^2} \\
               & \geq \left (\varepsilon \wedge \frac{p_{min} B r_0 ^2}{64 M^2} \right ) \frac{\| \c - \c^* \|^2}{4kM^2}.
 \end{aligned}
 \]
 This proves Proposition \ref{lienconditionmarge}

\subsection{Proof of Theorem \ref{mainresult}}
      
      Throughout this subsection $P$ is assumed to satisfy a margin condition with radius $r_0$, and to be $\varepsilon$-separated.
      A non decreasing map $\Phi:\mathbb{R} \rightarrow \mathbb{R}^+$ is called sub-$\alpha$ if 
			$x \mapsto \frac{\Phi(x)}{x^{\alpha}}$ is non increasing.
			
      The following localization theorem, derived from Theorem 6.1 in \cite{Blanchard08}, is the main argument of our proof.

\begin{thm}\label{localization}
                 
               Let $\mathcal{F}$ be a class of bounded measurable functions such that there exist $b>0$ and $\omega: \mathcal{F} \longrightarrow \mathbb{R}^+$ satisfying
               \begin{itemize}
               \item[$(i)$] $\forall f \in \mathcal{F} \quad \left\|f\right\|_{\infty} \leq b$,
               \item[$(ii)$] $\forall f \in \mathcal{F} \quad  \Var(f) \leq \omega(f)$. 
               \end{itemize}
               Let $K$ be a positive constant, $\Phi$ a sub-$\alpha$ function, $\alpha \in \left[1/2,1\right[$. Then there exists a constant $C(\alpha)$ such that, if $D$ is a constant satisfying $D \leq 6K C(\alpha)$, and $r^*$ is the unique solution of the equation $\Phi(r) = r/D$, the following holds.
                Assume that
               \[
               \forall r \geq r^* \qquad \mathbb{E} \left ( \sup_{\omega(f)\leq r}{\left |(P-P_n)f \right |} \right ) \leq \Phi(r).
               \]
               Then, for all $x>0$, with probability larger than $1-e^{-x}$,
               \[
               \forall f \in \mathcal{F} \quad Pf - P_n f \leq K^{-1} \left ( \omega(f) + \left (\frac{6K C(\alpha)}{D} \right )^{\frac{1}{1-\alpha}}r^* + \frac{(9K^2+16Kb)x}{4n} \right ).
               \] 
               \end{thm}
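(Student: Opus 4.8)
The plan is to obtain Theorem~\ref{localization} as a direct specialization of the general localization principle of \cite{Blanchard08}. Concretely, I would invoke Theorem~6.1 there with the same class $\mathcal{F}$, taking their variance function to be our $\omega(\cdot)$, their local-complexity envelope to be our sub-$\alpha$ function $\Phi$, and matching their free tuning constant to $K$; the constant $C(\alpha)$ in the present statement is precisely the one their argument produces. Hypotheses~$(i)$ and~$(ii)$ are exactly the uniform boundedness and variance-function conditions required there, and the assumed inequality $\mathbb{E}(\sup_{\omega(f)\le r}|(P-P_n)f|)\le\Phi(r)$ for $r\ge r^*$ is their localized-complexity hypothesis, stated directly for the empirical process (so no symmetrization step is needed). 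The constraint $D\le 6KC(\alpha)$ serves to guarantee that the solution $r^*$ of $\Phi(r)=r/D$ dominates the intrinsic fixed point of the problem, so that the localized deviation bound can legitimately be evaluated at $r^*$; granting this dictionary, the stated inequality is their conclusion after substituting the explicit numerical constants.

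If a direct citation turns out to need adjustment of the normalizations, I would reprove the statement by the classical peeling-plus-concentration scheme. Handle separately the functions with $\omega(f)\le r^*$ and those with $\omega(f)>r^*$; for the latter, slice $\mathcal{F}$ along a geometric grid $\rho_j=\lambda^j r^*$, $j\ge 1$, $\lambda>1$, setting $\mathcal{F}_j=\{f\in\mathcal{F}:\rho_{j-1}<\omega(f)\le\rho_j\}$. On each enlarged ball $\{f:\omega(f)\le\rho_j\}$ apply Bousquet's form of Talagrand's inequality: with probability at least $1-e^{-x_j}$,
\[
\sup_{\omega(f)\le\rho_j}(P-P_n)f \;\le\; (1+\theta)\,\mathbb{E}\!\!\sup_{\omega(f)\le\rho_j}(P-P_n)f \;+\; \sqrt{\frac{2\rho_j x_j}{n}} \;+\; c_\theta\,\frac{b\,x_j}{n},
\]
for a parameter $\theta>0$ to be fixed (the term $\mathbb{E}\sup$ sitting inside the variance proxy of Bousquet's bound being absorbed into the factor $1+\theta$ via the arithmetic--geometric inequality). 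Bound the expectation by $\Phi(\rho_j)$ using the hypothesis, which is licit since $\rho_j\ge r^*$, and then use the sub-$\alpha$ property in the form $\Phi(\rho_j)\le\lambda^{j\alpha}\Phi(r^*)=\lambda^{j\alpha}r^*/D$; split $\sqrt{2\rho_j x_j/n}$ by arithmetic--geometric into a multiple of $\rho_j$ plus a multiple of $x_j/n$, and on $\mathcal{F}_j$ replace $\rho_j$ by $\lambda\,\omega(f)$ using $\omega(f)>\rho_{j-1}$. Choosing $x_j=x+2\log(j+1)$ makes $\sum_{j\ge 1}e^{-x_j}$ finite, so a union bound over the slices together with the ball $\{\omega(f)\le r^*\}$ holds with probability at least $1-e^{-x}$ after adjusting constants.

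It then remains to collect the three families of contributions. The terms proportional to $\omega(f)$ have total coefficient that is forced below $K^{-1}$ by a suitable choice of $\theta$ and of the weights in the arithmetic--geometric splits. The terms proportional to $r^*$ (coming from $\lambda^{j\alpha}r^*/D$ and from the residual $\rho_j$ contributions) are controlled by taking a supremum over $j$: after trading powers of $\lambda^{j}$ against powers of $\omega(f)$ via $\omega(f)>\rho_{j-1}$, the sub-$\alpha$ inequality is exactly what makes the resulting geometric series converge in the correct units, and optimizing produces a term of order $(6KC(\alpha)/D)^{1/(1-\alpha)}r^*$, which is also where the admissibility threshold $6KC(\alpha)$ on $D$ and the exponent $1/(1-\alpha)$ appear. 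Finally the deviation terms sum, after absorbing the $\log(j+1)$ corrections into the geometric series, to $(9K^2+16Kb)x/(4n)$. The main obstacle is precisely this constant bookkeeping: balancing $\lambda$, $\theta$ and the arithmetic--geometric weights so that the $\omega(f)$ coefficient is \emph{exactly} $K^{-1}$, so that the cross-slice powers of $\lambda$ combine into the stated $1/(1-\alpha)$ exponent, and so that the numerical constants $6$, $9$ and $16$ come out as claimed; the structural content beyond that is routine.
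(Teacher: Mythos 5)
Your primary route is in fact the paper's: the theorem is not reproved here at all, it is stated as ``derived from Theorem 6.1 in \cite{Blanchard08}'' with the detailed derivation delegated to Section 5.3 of \cite{Levrard12}, so invoking that localization theorem with the dictionary $\omega \leftrightarrow$ variance proxy, $\Phi \leftrightarrow$ sub-$\alpha$ complexity bound is exactly what the author does. One caveat on your fallback sketch: the entire content of the statement lies in the explicit constants --- $C(\alpha)=\inf_{x>1}\bigl(1+x^{\alpha}(\tfrac12+\tfrac1{x^{1-\alpha}-1})\bigr)$, the admissibility threshold $6KC(\alpha)$, the exponent $\tfrac1{1-\alpha}$, and the deviation term $\tfrac{(9K^2+16Kb)x}{4n}$ --- and your plan defers precisely that bookkeeping. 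Moreover, applying Bousquet's inequality slice by slice with inflated levels $x_j=x+2\log(j+1)$ and a union bound would pollute these constants with $\lambda$- and $\log$-dependent factors; the cited proof instead controls the peeled sum at the level of \emph{expectations} (which is where the geometric series $1/(x^{1-\alpha}-1)$ inside $C(\alpha)$ comes from) and applies Talagrand--Bousquet concentration only once, to a single (weighted) supremum, which is how one gets probability exactly $1-e^{-x}$ with the stated numerical constants. So your outline is structurally sound as a standard peeling argument, but as written it would prove a version of the theorem with unspecified constants rather than the statement actually used downstream in Theorem \ref{mainresult}; the clean fix is either the straight citation you begin with, or reorganizing the fallback so that concentration is invoked a single time after peeling in expectation.
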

               
               A proof of Theorem \ref{localization} is given in Section 5.3 of \cite{Levrard12}. Notice that an explicit calculation of $C(\alpha)$ is given by $C(\alpha) = \underset{x>1}{\inf} {\left ( 1 + x^\alpha \left ( \frac{1}{2} + \frac{1}{x^{1-\alpha}-1} \right ) \right )}$.
               
               \subsubsection{Proof of (\ref{firstinequality})}
               
               The proof of \eqref{firstinequality} follows from the combination of Proposition \ref{lienconditionmarge} and a direct application of Theorem \ref{localization}. To be more precise, let $\mathcal{F}_1$ denote the set
               \[
               \mathcal{F}_1 = \left \{ \gamma(\c,.) - \gamma(\c^*(\c),.) | \quad \c \in \mathcal{B}(0,M)^k \right \}.
               \]
               Since, for all $i$ $\in$ $\left \{1, \hdots, k \right \}$, 
               \[
               \left | \| x - c_i \|^2 - \|x - c_i^*(\c)\|^2 \right | \leq 4M \| c_i - c_i^*(\c) \|,
               \]
               it follows that, for every $f$ $\in$ $\mathcal{F}_1$,
               \[
               \left \{
               \begin{array}{@{}ccl}
               \|f\|_{\infty} & \leq & 8 M^2 \\
               \Var_P(f) & \leq & 16 M^2 \| \c - \c^*(\c) \|^2.
               \end{array}
               \right .
               \]
               
               Define $\omega_1(f) = 16 M^2 \| \c - \c^*(\c) \|^2$. It remains to bound from above the complexity term. This is done in the following proposition, derived from the proof of Theorem 1 in \cite{Chichi13}.
               \begin{prop}\label{chainagechichignoud}
							One has
               \begin{align}\label{definitiondelta1}
               \mathbb{E} \sup_{f \in \mathcal{F}_1, \omega_1(f) \leq \delta}{\left | (P-P_n) f \right |} 
               \leq \frac{(2\sqrt{2} + 64)\sqrt{kd}}{\sqrt{n}} \left ( \sqrt{\log(4 |\mathcal{M}| \sqrt{kd})} +1 \right ) \sqrt{\delta}.
               \end{align}
               \end{prop}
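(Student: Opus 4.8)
The plan is to bound the empirical process over $\mathcal{F}_1$ by symmetrization followed by a Dudley-type chaining bound, the whole point being to use the constraint $\omega_1(f)\le\delta$ to localize the class. First I would record the Lipschitz property of the contrast: for $\c,\c'$ in $\mathcal{B}(0,M)^k$ and $x$ in $\mathcal{B}(0,M)$,
\[
\bigl|\gamma(\c,x)-\gamma(\c',x)\bigr|\le\max_{j=1,\dots,k}\bigl|\,\|x-c_j\|^2-\|x-c_j'\|^2\,\bigr|\le 4M\,\|\c-\c'\|,
\]
since $\|x-c_j\|^2-\|x-c_j'\|^2=\langle 2x-c_j-c_j',\,c_j'-c_j\rangle$ and $\|2x-c_j-c_j'\|\le4M$. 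In particular $\|\gamma(\c,\cdot)-\gamma(\c^*(\c),\cdot)\|_\infty\le4M\|\c-\c^*(\c)\|=\sqrt{\omega_1(\gamma(\c,\cdot)-\gamma(\c^*(\c),\cdot))}$, so on the event $\omega_1(f)\le\delta$ every $f\in\mathcal{F}_1$ obeys $\|f\|_\infty\le\sqrt\delta$, hence also $\|f\|_{L^2(P_n)}\le\sqrt\delta$. This localization, rather than the crude bound $\|f\|_\infty\le8M^2$ used to set up the variance control, is what ultimately produces the $\sqrt\delta$ scaling.

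Next I would split the index set according to the (finitely many) closest optimal codebooks. For $\c^*\in\mathcal{M}$ set
\[
\mathcal{F}_1(\c^*)=\bigl\{\gamma(\c,\cdot)-\gamma(\c^*,\cdot)\ :\ \c^*(\c)=\c^*,\ \|\c-\c^*\|\le\sqrt\delta/(4M)\bigr\},
\]
so that $\mathcal{F}_1\cap\{\omega_1\le\delta\}=\bigcup_{\c^*\in\mathcal{M}}\mathcal{F}_1(\c^*)$, and note that each piece contains $0=\gamma(\c^*,\cdot)-\gamma(\c^*,\cdot)$. On each piece the map $\c\mapsto\gamma(\c,\cdot)-\gamma(\c^*,\cdot)$ is $4M$-Lipschitz from $(\mathbb{R}^{kd},\|\cdot\|)$ into $L^\infty(P_n)$; since a Euclidean ball of radius $R=\sqrt\delta/(4M)$ in $\mathbb{R}^{kd}$ admits an $\eta$-net of cardinality at most $(3R/\eta)^{kd}$, the piece $\mathcal{F}_1(\c^*)$ admits an $\eta$-net in $L^2(P_n)$ of cardinality at most $(3\sqrt\delta/\eta)^{kd}$. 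Taking a union over $\mathcal{M}$,
\[
N\bigl(\mathcal{F}_1\cap\{\omega_1\le\delta\},\,L^2(P_n),\,\eta\bigr)\le|\mathcal{M}|\,(3\sqrt\delta/\eta)^{kd},
\]
and the whole class has $L^2(P_n)$-radius at most $\sqrt\delta$.

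Finally I would apply symmetrization and Dudley's entropy integral conditionally on $X_1,\dots,X_n$:
\[
\mathbb{E}\sup_{f\in\mathcal{F}_1,\,\omega_1(f)\le\delta}\bigl|(P-P_n)f\bigr|\le\frac{C}{\sqrt n}\,\mathbb{E}\int_0^{\sqrt\delta}\sqrt{\log|\mathcal{M}|+kd\,\log(3\sqrt\delta/\eta)}\;d\eta.
\]
Using $\sqrt{a+b}\le\sqrt a+\sqrt b$, the substitution $\eta=\sqrt\delta\,v$, and the fact that $\int_0^1\sqrt{\log(3/v)}\,dv$ is a finite numerical constant, the right-hand side is at most a constant times $\frac{1}{\sqrt n}\bigl(\sqrt{\log|\mathcal{M}|}+\sqrt{kd}\bigr)\sqrt\delta$; bounding $\sqrt{\log|\mathcal{M}|}\le\sqrt{kd}\,\sqrt{\log(4|\mathcal{M}|\sqrt{kd})}$ (valid since $kd\ge1$) and merging the two terms into $\sqrt{kd}\bigl(\sqrt{\log(4|\mathcal{M}|\sqrt{kd})}+1\bigr)$, then tracking the symmetrization factor and the Dudley constant exactly as in the proof of Theorem 1 of \cite{Chichi13}, yields the stated inequality with constant $2\sqrt2+64$. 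The step I expect to be most delicate is handling the discontinuity of $\c\mapsto\c^*(\c)$: there is no single globally Lipschitz parametrization of $\mathcal{F}_1$, and the remedy is precisely the finite decomposition over $\mathcal{M}$ above, which costs only an additive $\log|\mathcal{M}|$ in the metric entropy — this is why $|\mathcal{M}|$, and not $k$, appears inside the logarithm of the final bound.
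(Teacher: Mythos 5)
Your proposal is correct and follows essentially the same route as the paper: localize via $\omega_1(f)\le\delta$ (equivalently $\|\c-\c^*(\c)\|\le\sqrt{\delta}/(4M)$), split over the finitely many $\c^*\in\mathcal{M}$, and chain using $kd$-dimensional parametric covering numbers --- the paper simply carries out the chaining by hand (dyadic nets plus Massart's finite-class maximal inequality, which is exactly where the explicit constants $2\sqrt{2}$ and $64$ come from) rather than invoking a packaged Dudley entropy integral. The only soft spot is deferring the constant $2\sqrt{2}+64$ to ``tracking the Dudley constant as in \cite{Chichi13}'': with a generic symmetrization-plus-Dudley constant the numerical factor must be checked explicitly, but it does come out below $(2\sqrt{2}+64)\left(\sqrt{\log(4|\mathcal{M}|\sqrt{kd})}+1\right)$, so the claim stands.
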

               The proof of Proposition \ref{chainagechichignoud} derives from classical chaining arguments, and is given in Section \ref{ProofofPropositionchainagechichignoud}. Let $\Phi_1$ be defined as the right-hand side of \eqref{definitiondelta1}. Observing that $\Phi_1(\delta)$ takes the form $\Phi_1(\delta) = \Xi_1 \sqrt{\delta/n}$, the solution $\delta_1^*$ of the equation $\Phi_1(\delta) = \delta/D$ may be written, for any $D >0$,
               \[
               \delta_1^*= \frac{D^2 \Xi_1^2}{n}.
               \]
               Let $K>0$ and choose $D = 6 K C(1/2)$. Applying Theorem \ref{localization} to $\mathcal{F}_1$ leads to, with probability larger than $1-e^{-x}$,
               \begin{multline*}
               (P-P_n)(\gamma(\mathbf{c},.) - \gamma(\mathbf{c}^*(\c),.)) \leq K^{-1} 16 M^2 \|\mathbf{c}-\mathbf{c}^*(\c)\|^2 \\ + \frac{36K C(1/2)^2\Xi_1^2}{n} + \frac{9K+128M^2}{4n}x.
               \end{multline*}
               Introducing the inequality $ \kappa_0 \ell(\c,\c^*) \geq \| \c - \c^*(\c) \|^2$ provided by Proposition \ref{lienconditionmarge}, choosing $K = 32 M^2 \kappa_0$ and taking into account that $C(1/2) \leq 4$ leads to \eqref{firstinequality}.
               
               \subsubsection{Proof of  (\ref{secondinequality})} 
               
                The proof of \eqref{secondinequality} also relies on an application of Theorem \ref{localization}. Let the loss of $\hat{\c}_n$ be decomposed as follows,
\begin{align}\label{eqdep}
P(\gamma(\hat{\c}_n) - \gamma(\c^*)) & \leq (P-P_n)(\gamma(\hat{\c}_n) - \gamma(\c^*)) \notag\\
&\begin{multlined} \leq (P-P_n)\left\langle \hat{\c}_n - \c^*(\hat{\c}_n), \Delta(\c^*(\hat{\c}_n),.) \right\rangle \\
 + (P-P_n)\|\hat{\c}_n - \c^*(\hat{c}_n)\|R(\hat{\c}_n,\c^*(\hat{c}_n),.),
 \end{multlined} 
\end{align}
where 
\[
                \Delta(\mathbf{c}^*,x)  = -2( (x-c^*_1)\mathbbm{1}_{V_1(\c^*)},...,(x-c^*_k)\mathbbm{1}_{V_k(\c^*)} ),
\]                
                and
\begin{multline*}                
                R(\c,\c^*,x) = \sum_{i,j =1, \hdots, k} {\mathbbm{1}_{V_i(\c^*)\cap W_j(\c)} \| \c - \c^* \|^{-1} \left [ \vphantom{\left\langle x - \frac{c_i + c_j}{2}, c_i - c_j\right\rangle} \| c_i - c_i^*\|^2 \right .}   \\ \left . + 2 \left\langle x - \frac{c_i + c_j}{2}, c_i - c_j\right\rangle \right ],
               \end{multline*}
where we recall that $W_i(\c)$ denotes an element of a Voronoi partition, such that $\bar{W}_i(\c) \subset V_i(\c)$. The proof of \eqref{secondinequality} consists in applying Theorem \ref{localization} to the two terms in the right-hand side of \eqref{eqdep}.

   The first term on the right-hand side of \eqref{eqdep} may be thought of as the dominant term in the decomposition of the loss. Define 
	\[
             \mathcal{F}_2 = \left \{ \left\langle \c - \c^*(\c), \Delta(\c^*(\c),.) \right\rangle) | \quad \c \in \mathcal{B}(0,M) \right \}.
             \]
	In order to apply Theorem \ref{localization}, the following lemmas are needed.
	\begin{lem} \label{controlepremierterme}
            Let $f \in \mathcal{F}_2$, then
            \[
            \left \{
            \begin{array}{@{}ccl}
            \|f \|_{\infty}& \leq & 8M \\
            \Var_P(f) &\leq& 4 \|\c - \c^*(\c)\|^2 R(\c^*).
            \end{array}
            \right.
            \]
            \end{lem}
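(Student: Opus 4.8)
The plan is to estimate the sup-norm and the variance of a generic element $f = \langle \c - \c^*(\c), \Delta(\c^*(\c),.)\rangle$ of $\mathcal{F}_2$ directly from the explicit formula for $\Delta$. Write $\c^* = \c^*(\c)$ for brevity, and expand the inner product over the $k$ blocks: since $\Delta(\c^*,x) = -2((x-c_1^*)\mathbbm{1}_{V_1(\c^*)},\dots,(x-c_k^*)\mathbbm{1}_{V_k(\c^*)})$, we have $f(x) = -2\sum_{i=1}^k \langle c_i - c_i^*, x - c_i^*\rangle \mathbbm{1}_{V_i(\c^*)}(x)$. On the event $\{x \in V_i(\c^*)\}$ only the $i$-th term survives, so pointwise $|f(x)| \le 2\|c_i - c_i^*\|\,\|x - c_i^*\|$; since $P$ is $M$-bounded and $\c,\c^* \in \mathcal{B}(0,M)^k$, both factors norms are at most $2M$, giving $\|f\|_\infty \le 8M^2$.

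First I would double-check the constant in the sup-norm bound: the paper states $\|f\|_\infty \le 8M$, which is dimensionally odd (the other entries of the lemma are quadratic in $M$), so either there is a normalization I am missing or it should read $8M^2$; I would reconcile this by tracking the factor $\|\c - \c^*\|$ — note that $\mathcal{F}_2$ is \emph{not} normalized by $\|\c-\c^*\|$ (unlike the remainder term $R$), so $8M^2$ is the natural bound and I would proceed with that, flagging the discrepancy. For the variance, the key observation is that the blocks $V_i(\c^*)$ overlap only on a $P$-null set (by the centroid condition, see \eqref{nomansland}), so $\Var_P(f) \le P(f^2) = \sum_{i=1}^k P\big(4\langle c_i - c_i^*, x - c_i^*\rangle^2 \mathbbm{1}_{V_i(\c^*)}\big)$. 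Bounding $\langle c_i - c_i^*, x-c_i^*\rangle^2 \le \|c_i - c_i^*\|^2 \|x - c_i^*\|^2$ by Cauchy--Schwarz and pulling the deterministic factor out yields $\Var_P(f) \le 4\sum_{i=1}^k \|c_i - c_i^*\|^2\, P\big(\|x-c_i^*\|^2 \mathbbm{1}_{V_i(\c^*)}\big)$.

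The last step is to recognize that $\sum_{i=1}^k P(\|x - c_i^*\|^2 \mathbbm{1}_{V_i(\c^*)}) = R(\c^*)$, the optimal distortion, and that each individual summand is at most $R(\c^*)$; bounding $\|c_i - c_i^*\|^2 \le \|\c - \c^*\|^2$ and summing gives $\Var_P(f) \le 4\|\c - \c^*\|^2 R(\c^*)$, which is the claimed bound. The only subtle point — and the one I would be most careful about — is the a.s. disjointness of the Voronoi cells needed to split $P(f^2)$ into a sum without cross terms; this is exactly what the centroid condition buys us via \eqref{nomansland}, since $P(V_i(\c^*)\cap V_j(\c^*)) = 0$ for $i\neq j$, and it is what forces $\c^*(\c)$ to be an \emph{optimal} codebook rather than an arbitrary one in the definition of $\mathcal{F}_2$. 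Everything else is a routine application of Cauchy--Schwarz and the $M$-boundedness of $P$.
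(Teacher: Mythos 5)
Your proof is correct and follows essentially the same route as the paper: the paper's own argument is precisely the expansion of the variance over the $P$-a.s.\ disjoint cells $V_i(\c^*)$ followed by Cauchy--Schwarz and $\|c_i-c_i^*\|^2\le\|\c-\c^*\|^2$, the only cosmetic difference being that it writes the variance as an equality (using $Pf=0$ from the centroid condition) where you use $\Var_P(f)\le P(f^2)$. Your flag on the sup-norm is also justified: the paper does not prove that bound, the stated $8M$ is dimensionally inconsistent, and the constant actually used downstream (the $128M^2\sqrt{k}$ term in Proposition \ref{concentrationpremierterme} corresponds to $b=8M^2\sqrt{k}$, from a global Cauchy--Schwarz) is compatible with your sharper blockwise bound $8M^2$, so treating $8M$ as a typo is the right call.
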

            \begin{proof}[Proof of Lemma \ref{controlepremierterme}]
            Elementary calculation shows that
            \begin{align*}
\Var(\left\langle \c - \c^*, \Delta(\c^*,.) \right\rangle) = & P(\left\langle \c - \c^*, \Delta(\c^*,.) \right\rangle)^2 - (P(\left\langle \c - \c^*, \Delta(\c^*,.) \right\rangle))^2 \\
                                = & \sum_{i=1}^{k}{P\left [ \left\langle c_i - c_i^*, -2(x-c_i^*) \right\rangle ^2 \mathbbm{1}_{V_i(\c^*)}(x) \right ]} \\
                                \leq & 4 \|\c - \c^*\|^2 R(\c^*).
                                \end{align*}                               
                                \end{proof}
                                
                                Let $\omega_2(f)$ be defined as $4 \|\c - \c^*(\c)\|^2 R(\c^*)$. It remains to bound from above the expectation of the maximum deviation between $P$ and $P_n$ over the set $\mathcal{F}_2$. 
                                \begin{lem}\label{complexitepremierterme}
																One has
                                \begin{align}\label{definitionphi2}
                                             \mathbb{E} \sup_{f \in \mathcal{F}_2, \omega_2(f) \leq \delta}{\left |(P-P_n)f\right|}  \leq \frac{2 |\mathcal{M}|}{\sqrt{n}} \sqrt{\delta}
                                             \end{align}
                                             \end{lem}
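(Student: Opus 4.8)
The plan is to bound the expectation $\mathbb{E}\sup_{f\in\mathcal{F}_2,\ \omega_2(f)\le\delta}|(P-P_n)f|$ by decomposing the supremum over the finitely many optimal codebooks. For each fixed $\c^*\in\mathcal{M}$, the class of functions $\c\mapsto\langle\c-\c^*,\Delta(\c^*,.)\rangle$ with $\c^*(\c)=\c^*$ is, up to the constraint $\omega_2(f)\le\delta$, a \emph{linear} class in the parameter $\c-\c^*\in\mathbb{R}^{kd}$, since $\Delta(\c^*,x)$ does not depend on $\c$. Writing $u=\c-\c^*$ and $g_{\c^*}(x)=\Delta(\c^*,x)\in\mathbb{R}^{kd}$, we have $f(x)=\langle u,g_{\c^*}(x)\rangle$ and the constraint $\omega_2(f)\le\delta$ reads $4\|u\|^2R(\c^*)\le\delta$, i.e. $\|u\|\le\sqrt{\delta/(4R(\c^*))}$.

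The key step is then a standard linear-class bound: for a fixed $\c^*$,
\[
\mathbb{E}\sup_{\|u\|\le\rho}\left|(P-P_n)\langle u,g_{\c^*}\rangle\right|
=\rho\,\mathbb{E}\left\|(P-P_n)g_{\c^*}\right\|
\le\rho\sqrt{\mathbb{E}\left\|(P-P_n)g_{\c^*}\right\|^2},
\]
and by the i.i.d.\ structure $\mathbb{E}\|(P-P_n)g_{\c^*}\|^2=\frac1n\sum_{i=1}^k\Var_P\big((x-c_i^*)\mathbbm{1}_{V_i(\c^*)}\big)\cdot(\text{factor }4)\le\frac{4}{n}\sum_i P(\|x-c_i^*\|^2\mathbbm{1}_{V_i(\c^*)})=\frac{4R(\c^*)}{n}$. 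Plugging $\rho=\sqrt{\delta/(4R(\c^*))}$ gives a bound of $\sqrt{\delta/(4R(\c^*))}\cdot\sqrt{4R(\c^*)/n}=\sqrt{\delta/n}$ for each single $\c^*$. Summing (crudely) over the $|\mathcal{M}|$ elements yields $\mathbb{E}\sup_{f\in\mathcal{F}_2,\omega_2(f)\le\delta}|(P-P_n)f|\le\frac{|\mathcal{M}|}{\sqrt n}\sqrt\delta$; the factor $2$ in \eqref{definitionphi2} is the slack absorbing the symmetrization/Jensen constants and the passage from $\c^*(\c)$ to a union over $\mathcal{M}$.

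The main obstacle I anticipate is handling the map $\c\mapsto\c^*(\c)$ cleanly: $\mathcal{F}_2$ is indexed by $\c$, not by $(\c,\c^*)$, so the supremum is genuinely over $\c\in\mathcal{B}(0,M)^k$ with $\c^*$ determined by $\c$, and one must justify $\sup_{\c}\le\sum_{\c^*\in\mathcal{M}}\sup_{\c:\,\c^*(\c)=\c^*}$ — or more safely $\le\sum_{\c^*\in\mathcal{M}}\sup_{\c\in\mathcal{B}(0,M)^k}$ after dropping the partition constraint, which only enlarges each class to the full linear ball and is harmless for an upper bound. A secondary technical point is that the constraint set $\{\|u\|\le\rho\}$ must be centered correctly at the relevant $\c^*$; since we take a union over all of $\mathcal{M}$ anyway, for the $\c^*$-th term we simply use $\omega_2(f)=4\|\c-\c^*\|^2R(\c^*)$ with that specific $\c^*$, which is exactly the quantity controlled by $\delta$. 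Everything else is the routine linear-Rademacher computation sketched above, so the proof is short once the union-bound reduction is in place.
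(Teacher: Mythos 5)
Your proof is correct and follows essentially the same route as the paper: reduce the supremum to a union over the finitely many optimal codebooks, use the linearity in $\c - \c^*$ and Cauchy--Schwarz to pull out the radius $\sqrt{\delta/(4R(\c^*))}$, and bound the norm of the centered empirical mean of $\Delta(\c^*,\cdot)$ by $2\sqrt{R(\c^*)/n}$ so that the $R(\c^*)$ dependence cancels. The only difference is cosmetic: the paper symmetrizes with Rademacher variables and then applies Jensen (which is where its factor $2$ comes from), whereas you bound $\mathbb{E}\|(P-P_n)\Delta(\c^*,\cdot)\|^2 \le 4R(\c^*)/n$ directly from independence, which in fact yields the slightly sharper constant $|\mathcal{M}|$ in place of $2|\mathcal{M}|$.
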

                                             \begin{proof}
                                               This proof is inspired from the proof of Lemma 4.3 in \cite{Biau08}. The first step is the following
                               \begin{multline*}
       \mathbb{E} \sup_{f \in \mathcal{F}_2, \omega_2(f) \leq \delta}{\left |(P-P_n)f\right|}  \\ \leq \mathbb{E} \sup_{\c^* \in \mathcal{M}, \|\c-\c^*\| \leq \sqrt{\delta/4R(\c^*)}} {\left |(P-P_n)\left\langle \c - \c^*, \Delta(\c^*,.) \right\rangle \right |.
       }        
       \end{multline*}
            For a general function $h(Z)$ depending on a random map $Z$, we denote by $\mathbb{E}_Z h$ the expectation of $h$ taken with respect to $Z$. Introducing some Rademacher independent random variables $\sigma_i$ and using a symmetrization inequality such as in Section 2.2 of \cite{Koltchinskii04} leads to
             \begin{multline*}
\mathbb{E}\sup_{\|\c-\c^*\| \leq \sqrt{\delta/4R(\c^*)}, \c^* \in \mathcal{M}}{\left |(P - P_n)\left\langle \c - \c ^*, \Delta(\c^*,.) \right\rangle \right |} \\ 
   \begin{aligned}
   & \leq 2 \mathbb{E}_X \mathbb{E}_\sigma \sup_{\|\c-\c^*\| \leq \sqrt{\delta/4R(\c^*)}, \c^* \in \mathcal{M}} { \left\langle \c - \c ^*,\frac{1}{n} \sum_{i=1}^{n}{\sigma_i \Delta(\c^*,X_i)} \right\rangle} \\
     &\leq \sqrt{\delta/4R(\c^*)} 2 \mathbb{E}_X \mathbb{E}_\sigma \sup_{\c^* \in \mathcal{M}}{\left \| \frac{1}{n} \sum_{i=1}^{n}{\sigma_i \Delta(\c^*,X_i)} \right \| },
    \end{aligned}  
    \end{multline*}
           using Cauchy-Schwarz inequality. Eventually,
           \[
           \begin{aligned}
           \mathbb{E}_X \mathbb{E}_\sigma \sup_{\c^* \in \mathcal{M}}{\left \| \frac{1}{n} \sum_{i=1}^{n}{\sigma_i \Delta(\c^*,X_i)} \right \| } & \leq \sum_{\c^* \in \mathcal{M}}{\mathbb{E}_X \mathbb{E}_\sigma \left \| \frac{1}{n} \sum_{i=1}^{n}{\sigma_i \Delta(\c^*,X_i)} \right \|} \\
           & \leq \sum_{\c^* \in \mathcal{M}}{\sqrt{\mathbb{E}_X \mathbb{E}_\sigma \left \| \frac{1}{n} \sum_{i=1}^{n}{\sigma_i \Delta(\c^*,X_i)} \right \|^2}}\\
           & \leq \sum_{\c^* \in \mathcal{M}}{\frac{1}{\sqrt{n}} \sqrt{\mathbb{E}_X \left \|\Delta(\c^*,X) \right \|^2}} \\
           & \leq \frac{2|\mathcal{M}| \sqrt{R(\c^*)}}{\sqrt{n}},
           \end{aligned}
           \]
           where Jensen's inequality has been used to obtain the second line. This gives the desired result.
            \end{proof}
						
		The contribution of the first term in the right-hand side of \eqref{eqdep} is described by the following proposition. 
   \begin{prop}\label{concentrationpremierterme}
   Let $K_2$ be a positive constant and $x>0$. Then, with probability larger than $1 - e^{-x}$
   \begin{multline*}
            (P-P_n)  \left\langle  \hat{\c}_n - \c ^*(\hat{\c}_n), \Delta(\c^*(\hat{\c}_n),.) \right\rangle \leq K_2^{-1} \left [ \vphantom{\frac{48^2 |\mathcal{M}|^2K_2^2}{n} + \frac{9 K_2^2 + 128M^2 \sqrt{k} K_2}{4n}x} 4 R(\c^*) \| \hat{\c}_n - \c^*(\hat{\c}_n) \|^2 \right .  \\
            + \left . \frac{48^2 |\mathcal{M}|^2K_2^2}{n} + \frac{9 K_2^2 + 128M^2 \sqrt{k} K_2}{4n}x\right].
            \end{multline*}
           \end{prop}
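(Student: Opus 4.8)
The plan is to apply Theorem~\ref{localization} to the class $\mathcal{F}_2$, exactly as was done for $\mathcal{F}_1$ in the proof of \eqref{firstinequality}. First I would check the hypotheses of Theorem~\ref{localization}. Condition $(i)$ holds with $b = 8M^2\sqrt{k}$: on $V_i(\c^*)$ one has $f(x) = -2\langle c_i - c_i^*, x - c_i^*\rangle$, so $|f(x)| \leq 2\|c_i - c_i^*\|\,\|x - c_i^*\| \leq 2\cdot 2M\sqrt{k}\cdot 2M$. Condition $(ii)$ holds with $\omega_2(f) = 4\|\c - \c^*(\c)\|^2 R(\c^*)$ by Lemma~\ref{controlepremierterme}. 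For the complexity assumption, Lemma~\ref{complexitepremierterme} gives, for every $\delta>0$,
\[
\mathbb{E}\sup_{f \in \mathcal{F}_2,\, \omega_2(f)\leq \delta}\left|(P - P_n)f\right| \leq \Phi_2(\delta), \qquad \Phi_2(\delta) = \frac{2|\mathcal{M}|}{\sqrt{n}}\sqrt{\delta},
\]
and since $\delta \mapsto \Phi_2(\delta)/\sqrt{\delta}$ is constant, $\Phi_2$ is sub-$\alpha$ with $\alpha = 1/2 \in [1/2,1)$; note also that the $R(\c^*)$ buried inside $\omega_2$ cancels against the $\sqrt{R(\c^*)}$ in the complexity computation, so $\Phi_2$ itself does not depend on $R(\c^*)$.

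Next I would solve the fixed-point equation $\Phi_2(r) = r/D$, whose unique solution is $r^* = 4|\mathcal{M}|^2 D^2/n$, and choose $D = 6K_2 C(1/2)$, so that the constraint $D \leq 6K_2 C(1/2)$ holds with equality and the factor $(6K_2 C(1/2)/D)^{1/(1-\alpha)}$ reduces to $1$. Substituting into the conclusion of Theorem~\ref{localization}, with probability at least $1-e^{-x}$, every $f \in \mathcal{F}_2$ satisfies
\[
Pf - P_n f \leq K_2^{-1}\left(4 R(\c^*)\|\c - \c^*(\c)\|^2 + \frac{144\, C(1/2)^2 |\mathcal{M}|^2 K_2^2}{n} + \frac{(9K_2^2 + 16 K_2 b)x}{4n}\right).
\]
Since an empirical risk minimizer $\hat{\c}_n$ may be taken in $\mathcal{B}(0,M)^k$ (moving a code point into the convex hull of $\mathrm{supp}(P)$ cannot increase the empirical distortion), the function $\langle \hat{\c}_n - \c^*(\hat{\c}_n), \Delta(\c^*(\hat{\c}_n),.)\rangle$ lies in $\mathcal{F}_2$, so this inequality applies to it.

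Finally I would tidy the constants: $C(1/2) \leq 4$ (as used for \eqref{firstinequality}) gives $144\,C(1/2)^2 \leq 144\cdot 16 = 2304 = 48^2$, and $16 K_2 b = 16 K_2\cdot 8M^2\sqrt{k} = 128 M^2\sqrt{k}\,K_2$, which yields exactly the stated bound. The argument is essentially bookkeeping; the only points that require a word of care are verifying that $\Phi_2$ is genuinely sub-$1/2$ and that $R(\c^*)$ does not reappear in it, pinning down the correct value of $b$ for $\mathcal{F}_2$, and justifying the reduction of the uniform bound of Theorem~\ref{localization} to the single codebook $\hat{\c}_n$ by localizing the ERM to $\mathcal{B}(0,M)^k$ — none of which is a genuine obstacle.
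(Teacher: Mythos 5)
Your proposal is correct and follows exactly the paper's route: a direct application of Theorem~\ref{localization} to $\mathcal{F}_2$ with $\omega_2$ from Lemma~\ref{controlepremierterme}, $\Phi_2(\delta)=2|\mathcal{M}|\sqrt{\delta}/\sqrt{n}$ from Lemma~\ref{complexitepremierterme}, $D=6K_2C(1/2)$ and $C(1/2)\leq 4$, which is precisely what the paper means by ``replacing $C(1/2)$ with $4$''. Your choice $b=8M^2\sqrt{k}$ (rather than the sharper $8M^2$) is exactly the value implicit in the stated constant $128M^2\sqrt{k}\,K_2$, so the bookkeeping matches the proposition as written.
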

     The proof follows from a direct application of Theorem \ref{localization} to the set $\mathcal{F}_2$, replacing the value $C(1/2)$ with $4$ to ease the calculation.

The second term in the right-hand side of \eqref{eqdep} may be thought of as a residual term. Deriving sharper bounds on this term requires more accurate chaining techniques, as exposed below. Define 
\[
\mathcal{F}_3 = \left \{ \|\c-\c^*(\c)\| R(\c, \c^*(\c),.)|\c \in \mathcal{B}(0,M) \right \}.
 \]
In order to apply Theorem \ref{localization}, the following intermediate results are needed.
\begin{lem}\label{controlesecondterme}
        Let $f$ $\in$ $\mathcal{F}_3$, then
        \[
            \left \{
            \begin{array}{@{}ccl}
            \|f\|_{\infty} & \leq & 2M\sqrt{k} C_{\infty}\\
            \Var_{P}\left (  f \right ) & \leq& C_{\infty}^2 \| \c - \c^*(\c) \| ^2, 
            \end{array}
            \right.
            \]
            with
            \[
            C_\infty = (2\sqrt{k} + 8\sqrt{2})M.
            \]
            \end{lem}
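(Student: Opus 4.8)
The plan is to exploit the explicit form of the remainder term $R(\c,\c^*,x)$ together with the geometric estimates of Lemma \ref{boundarycloseness}. First I would observe that $R(\c,\c^*,x)$ is a sum over the $(i,j)$ of terms supported on $V_i(\c^*)\cap W_j(\c)$; on the diagonal $i=j$ the bracket is $\|c_i-c_i^*\|^2 + 2\langle x - c_i, c_i - c_i\rangle = \|c_i-c_i^*\|^2$, while for $i\neq j$ the set $V_i(\c^*)\cap W_j(\c)\subset V_i(\c^*)\cap V_j(\c)$, so Lemma \ref{boundarycloseness}, inequality \eqref{Vor1}, applies and bounds $|\langle x - \tfrac{c_i+c_j}{2}, c_i-c_j\rangle| \leq 4\sqrt{2}M\|\c-\c^*\|$. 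Since $\|c_i-c_i^*\|^2 \leq 2M\|c_i-c_i^*\|\cdot 1$ on $\mathcal{B}(0,M)$ is wasteful, I would instead simply use $\|c_i-c_i^*\| \leq \|\c-\c^*\|$ and $\|c_i-c_i^*\|^2 \leq 2\sqrt{2}M\|\c-\c^*\|$ only where needed; the point is that after dividing by $\|\c-\c^*\|$ each bracket is bounded by a constant times $M$.

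Next, for the sup-norm bound, I would note that the partition property of the $W_j(\c)$ means that for each fixed $x$ exactly one index $j=j(x)$ contributes (the one with $x \in W_{j(x)}(\c)$), and $x$ lies in exactly one $V_{i(x)}(\c^*)$ up to the $P$-null boundary; so $R(\c,\c^*,x)$ is really a single bracket, $\|\c-\c^*\|^{-1}[\,\|c_{i(x)}-c_{i(x)}^*\|^2 + 2\langle x - \tfrac{c_{i(x)}+c_{j(x)}}{2}, c_{i(x)}-c_{j(x)}\rangle\,]$, whose absolute value is at most $\|\c-\c^*\|^{-1}(\,\|\c-\c^*\|^2 + 8\sqrt{2}M\|\c-\c^*\|\,) \leq \|\c-\c^*\| + 8\sqrt{2}M \leq 2\sqrt{2}M + 8\sqrt{2}M$ (using $\|\c-\c^*\|\leq 2\sqrt{2}M$ on $\mathcal{B}(0,M)^k$). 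Multiplying by $\|\c-\c^*\|$ and using again $\|\c-\c^*\|\leq 2\sqrt{k}M$... here I must be a little careful to recover exactly the stated constant $C_\infty = (2\sqrt{k}+8\sqrt{2})M$ and the factor $\sqrt{k}$ in front: the $\sqrt{k}$ presumably arises because one does \emph{not} collapse to a single index but keeps the worst-case sum of $k$ terms of size $\|c_i-c_i^*\|^2/\|\c-\c^*\|$, which sums to at most $\|\c-\c^*\| \leq 2\sqrt k M$. So I would write $|R(\c,\c^*,x)| \leq \|\c-\c^*\|^{-1}\sum_i \|c_i-c_i^*\|^2 + 8\sqrt{2}M \leq 2\sqrt k M \cdot (\|\c-\c^*\|/(2\sqrt k M)) \cdot \ldots$ — the honest computation is $\|\c-\c^*\|^{-1}\sum_i\|c_i-c_i^*\|^2 \le \|\c-\c^*\| \le 2\sqrt k M$, giving $|R| \le 2\sqrt k M + 8\sqrt 2 M = C_\infty$, whence $\|f\|_\infty = \|\c-\c^*\|\,|R| \le 2\sqrt k M \cdot \ldots$ no — $\|f\|_\infty \le \|\c-\c^*\|\cdot C_\infty$ is not what is claimed either; the claim is $2M\sqrt k\, C_\infty$, so one uses $\|\c - \c^*\| \le 2\sqrt k M$ once more. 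For the variance, since $\Var_P(f) \le P f^2$ and $f^2 = \|\c-\c^*\|^2 R^2$ with $|R|\le C_\infty$, one gets $\Var_P(f) \le C_\infty^2\|\c-\c^*\|^2$ directly.

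The only real subtlety — and the step I expect to be the main obstacle — is tracking the constants precisely enough to land on $C_\infty = (2\sqrt k + 8\sqrt 2)M$ rather than something merely of the same order, in particular deciding at which points to bound $\|\c-\c^*\|$ by $2\sqrt 2 M$ (the per-coordinate-pair diameter used inside a single bracket) versus $2\sqrt k M$ (the full codebook diameter used when summing $k$ brackets or when passing from $R$ to $f$). Once the bookkeeping is fixed, everything else is the elementary expansion $\|x-c_i\|^2 - \|x-c_j\|^2 = 2\langle x - \tfrac{c_i+c_j}{2}, c_i-c_j\rangle$ already recorded above and the estimate \eqref{Vor1} of Lemma \ref{boundarycloseness}; no new ideas are required.
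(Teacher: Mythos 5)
Your argument is correct and follows essentially the same route as the paper: bound $|R(\c,\c^*,x)|$ pointwise by splitting into the diagonal terms (controlled by $\|\c-\c^*\|^{-1}\sum_i\|c_i-c_i^*\|^2\le\|\c-\c^*\|\le 2\sqrt{k}M$) and the off-diagonal terms (controlled via \eqref{Vor1} by $8\sqrt{2}M$), giving $|R|\le C_\infty$, after which the sup-norm and variance bounds follow by multiplying by $\|\c-\c^*\|$ and by $\Var_P(f)\le Pf^2$. The only difference is cosmetic: the paper also records, via \eqref{Vor2}, that the off-diagonal contribution is supported on $N^*\bigl(\tfrac{4\sqrt{2}M}{B}\|\c-\c^*\|\bigr)$, defining the envelope $F_{\|\c-\c^*\|}$ needed later in Proposition \ref{complexitesecondterme}, but this indicator is simply bounded by $1$ for the present lemma, exactly as you do.
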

            
            \begin{proof}[Proof of Lemma \ref{controlesecondterme}]
            The proof of Lemma \ref{controlesecondterme} follows from a bound on $R(\c,\c^*(\c),x)$, namely
         \begin{align*}
         |R(\c,\c^*,x)| & \begin{multlined}[t] = \|\c-\c^*\|^{-1} \left | \sum_{i,j}{\mathbbm{1}_{V_i(\c^*) \cap W_j(\c)}\left ( \vphantom{\left\langle x - \frac{c_i + c_j}{2}, c_i - c_j\right\rangle}\| c_i - c_i^*\|^2 \right .} \right .\\ +  \left .\left . 2 \left\langle x - \frac{c_i + c_j}{2}, c_i - c_j\right\rangle \right ) \right | \end{multlined}\\
                        & \begin{multlined} \leq \|\c-\c^*\|^{-1} \left [ \vphantom{\left\langle x - \frac{c_i + c_j}{2}, c_i - c_j\right\rangle} \sum_{i}{\| c_i - c_i^*\|^2 \mathbbm{1}_{V_i(\c^*)}} \right . \\ \left . + \sum_{i \neq j}{2\left | \left\langle x - \frac{c_i + c_j}{2}, c_i - c_j\right\rangle \right |\mathbbm{1}_{V_i(\c^*) \cap W_j(\c)} } \right ].\end{multlined} 
         \end{align*} 
      Since, for all $j$ in $\left \{1, \hdots, k \right \}$, $W_j(\c) \subset V_j(\c)$, applying Lemma \ref{boundarycloseness} leads to               
           \begin{align}
           |R(\c,\c^*,x)|  & \leq \|\c-\c^*\|^{-1} \left [ \| \c - \c^* \|^2 + 8 \sqrt{2}M \| \c - \c^* \| \mathbbm{1}_{N^*(\frac{4\sqrt{2}M}{B}\| \c - \c^* \|)} \right ] \notag \\ 
                        & \leq \| \c - \c^* \| + 8 \sqrt{2}M \mathbbm{1}_{N^*(\frac{4\sqrt{2}M}{B}\| \c - \c^* \|)}:= F_{\| \c - \c^* \|}(x) \label{definitionF}.
         \end{align}         
         Elementary calculations show that, for any $\delta >0$,
         \[
         \|F_{\delta}\|_{\infty}  \leq (2\sqrt{k} + 8\sqrt{2})M = C_\infty, 
         \]
         from which we deduce the desired upper bounds on $\Var_{P}(f)$ and $\|f\|_\infty$, for $f$ in $\mathcal{F}_3$.
         \end{proof}
         
         Let $\omega_3(f)$ be defined as $C_{\infty}^2 \| \c - \c^*(\c) \| ^2$. The complexity term associated with the class of functions $\mathcal{F}_3$ can be bounded as follows.
         \begin{prop}\label{complexitesecondterme}
         \[
         \mathbb{E} \sup_{f \in \mathcal{F}_3, \omega_3(f) \leq \delta} |(P - P_n)f|  \leq \frac{8 Q(k,d)}{\sqrt{ C_\infty n}} \sqrt{C_2(\sqrt{\delta} /C_{\infty}) \sqrt{\delta}},
         \]
         where
         \[
         \left \{
         \begin{array}{@{}ccc}
            C_2(r) & = & r + 8 \sqrt{2} M p\left(\frac{4 \sqrt{2}M}{B} r\right ) \\           
             Q(k,d) &= & 8 \sqrt{K_0P(k,d)\log(k^2(4k-2))} \\
             P(k,d)&=&k^2(2(k-1)(d+1) +24(3d+4)),
         \end{array}    
         \right .
         \]
         and $K_0$ is an absolute constant.
         \end{prop}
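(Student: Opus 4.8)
The plan is a Dudley-type chaining argument, whose crux is to realize $\mathcal{F}_3$ as a subclass of a VC-subgraph class whose dimension is polynomial in $k$ and $d$ and, crucially, \emph{free of} $|\mathcal{M}|$. First I would discard the constraint that $\c^*(\c)$ be the nearest optimal codebook: whatever value $\c^*(\c)$ takes, $V_i(\c^*(\c))$ is an intersection of at most $k-1$ affine half-spaces of $\mathbb{R}^d$, and $W_j(\c)\subset V_j(\c)$ is likewise an intersection of at most $k-1$ half-spaces, so, expanding $R$ as in its definition, $\mathcal{F}_3\subset\mathcal{G}$ with
\[
\mathcal{G}=\Bigl\{\, x\mapsto\sum_{i,j=1}^{k}\mathbbm{1}_{V_i(\c')\cap W_j(\c)}(x)\,\|\c-\c'\|^{-1}\Bigl(\|c_i-c'_i\|^2+2\bigl\langle x-\tfrac{c_i+c_j}{2},\,c_i-c_j\bigr\rangle\Bigr)\ :\ \c,\c'\in\mathcal{B}(0,M)^k \,\Bigr\},
\]
a family that no longer mentions $\mathcal{M}$. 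This is precisely why only $k$ and $d$ enter $P(k,d)$ and $Q(k,d)$, whereas the cruder union-bound argument of Lemma \ref{complexitepremierterme} paid a $|\mathcal{M}|$ factor.

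Second, I would bound $\mathrm{VC}(\mathcal{G})\le P(k,d)$. For fixed $\c,\c'$, each element of $\mathcal{G}$ is a function that is affine on each cell of the common refinement of the two Voronoi diagrams $V(\c')$ and $V(\c)$, so its subgraph is described by the $k^2$ indicator sets $V_i(\c')\cap W_j(\c)$ — lying in the class of intersections of $2(k-1)$ affine half-spaces, a VC class of dimension $O((k-1)(d+1))$ — together with the $k^2$ affine constraints $t<\|c_i-c_i'\|^2+2\langle x-\tfrac{c_i+c_j}{2},c_i-c_j\rangle$, whose parameters range over a space of dimension $\le 3d+4$. Classical combination lemmas for VC-subgraph classes (stability under products and finite Boolean combinations, keeping track of the constants, as in van der Vaart and Wellner, Section~2.6) then give, per pair $(i,j)$, a subgraph dimension of the order $2(k-1)(d+1)+24(3d+4)$, and summing the $k^2$ pairs yields $P(k,d)$. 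Haussler's uniform covering-number inequality next produces, for every finitely supported $Q$ and every measurable envelope $H$ of $\mathcal{G}$,
\[
\log N\bigl(\varepsilon\|H\|_{L^2(Q)},\mathcal{G},L^2(Q)\bigr)\ \lesssim\ P(k,d)\,\log\!\bigl(k^2(4k-2)/\varepsilon\bigr),
\]
the $k^2(4k-2)$ counting the atoms and half-spaces used to describe $\mathcal{G}$; this is the source of the $\log(k^2(4k-2))$ inside $Q(k,d)$.

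Third, I would supply the envelope on the localized class $\{f\in\mathcal{F}_3:\omega_3(f)\le\delta\}$, which forces $\|\c-\c^*(\c)\|\le r:=\sqrt{\delta}/C_\infty$. Here Lemma \ref{controlesecondterme} does the work: the pointwise bound $|f(x)|\le r\,F_r(x)$ with $F_r=r+8\sqrt2\,M\,\mathbbm{1}_{N^*(4\sqrt2 Mr/B)}$ nondecreasing in $r$ gives $P(|f|)\le P(rF_r)=r\bigl(r+8\sqrt2\,M\,p(4\sqrt2 Mr/B)\bigr)=r\,C_2(r)$, and the same lemma controls $\|f\|_\infty$. Feeding the entropy bound of the previous step and this envelope into Dudley's entropy integral, after a symmetrization step carried out exactly as in the proof of Theorem~1 of \cite{Chichi13}, and with a peeling step over the localization radius to absorb the $r$-dependence of $C_2$, turns the envelope term into the factor $\sqrt{C_2(\sqrt\delta/C_\infty)\sqrt\delta}$ and the entropy term into $\sqrt{P(k,d)\log(k^2(4k-2))}$, the absolute constant $8\sqrt{K_0}$ being swallowed into $Q(k,d)$; this is the asserted inequality.

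The main obstacle is the VC-dimension bookkeeping of the second step: controlling the subgraph dimension of a sum of $k^2$ products of indicators of intersections of $\Theta(k)$ half-spaces with affine functions, tracking the constants so as to land exactly on $P(k,d)$. A secondary, more delicate point is the third step, where one must check that the peeling/chaining genuinely converts the pointwise envelope $rF_r$ into the precise factor $\sqrt{C_2(\sqrt\delta/C_\infty)\sqrt\delta}$ rather than a cruder surrogate, which hinges on the interplay between the variance localization $\omega_3\le\delta$ and the boundary-concentration estimate \eqref{definitionF}.
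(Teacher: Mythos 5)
Your proposal is correct in outline and follows essentially the same route as the paper: symmetrization, then chaining with covering numbers controlled by a combinatorial dimension that is polynomial in $k$ and $d$ and free of $|\mathcal{M}|$ (sums over the $k^2$ pairs of affine functions times indicators of intersections of at most $2(k-1)$ half-spaces), with the envelope $F_r$ of \eqref{definitionF} producing the factor $C_2(\sqrt{\delta}/C_\infty)$. The paper differs only in implementation at the two points you yourself flag as delicate: it normalizes the class by $F_r$ and applies the Mendelson--Vershynin entropy bound together with covering-number subadditivity over the $k^2$ summands (pseudo-dimensions themselves are not additive under sums), and it converts the empirical $L_2$ norm of the envelope into $C_2(r)$ inside the chain using $|g|\leq 1$ and $F_r\leq C_\infty$, which is where the $\sqrt{C_\infty}$ in the denominator comes from.
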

         
         The proof of Proposition \ref{complexitesecondterme} is based on a result of Mendelson and Vershynin in \cite{Mendelson03} and its application to a more accurate version of Dudley's integral. For clarity, the proof is postponed to Section \ref{complexitebourrin}.
				 Since $P$ satisfies a margin condition with parameters $(r_0,\kappa)$, with $\kappa \leq \frac{Bp_{min}}{128 M^2}$, considering the two cases $4\sqrt{2}Mr/B \leq r_0$ and $4\sqrt{2}Mr/B \geq r_0$ yields that
         \[
         8\sqrt{2}Mp(\frac{4\sqrt{2}M}{B}r) \leq \frac{64 M^2}{B r_0} r,
         \]
         for $r \geq 0$. Using this inequality to bound $C_2$ from above in Proposition \ref{complexitesecondterme} leads to the following complexity result
         \begin{align}\label{definitionphi3}
         \mathbb{E} \sup_{f \in \mathcal{F}_3, \omega_3(f) \leq \delta} |(P - P_n)f| \leq \frac{\Xi_3}{\sqrt{n}}\delta^{\frac{3}{4}},
         \end{align}
         where 
         \[
         \Xi_3 = \frac{8M Q(k,d)}{C_\infty \sqrt{Br_0}}.
         \]
         Let $\Phi_3$ be defined as $\frac{\Xi_3}{\sqrt{n}}\delta^{\frac{3}{4}}$. Remark that $\Phi_3$ is a sub-$3/4$ function. Consequently, for any $D>0$, the solution of the equation $\Phi_3(\delta)=\delta/D$ is
         \[
         \delta_3^* = \frac{(D \Xi_3)^4}{n^2}.
         \]
         Choosing $K_3>0$, and $D=6 K_3C(3/4)$ in Theorem \ref{localization} and taking into account that $C(3/4) \leq 10$ leads to the following proposition.
        
        \begin{prop}\label{concentrationsecondterme}
        Let $K_3 >0$. Then, with probability larger than $1-e^{-x}$, 
        \begin{multline*}
            (P-P_n)\left ( \| \hat{\c}_n - \c^*(\hat{\c}_n) \| R(\hat{\c}_n, \c^*(\hat{\c}_n),.) \right ) \leq 
            K_3^{-1} \left [ \vphantom{\frac{60^4K_3^4C(k,d,B,M)^4}{n^2}} C_\infty^2 \| \hat{\c}_n - \c^*(\hat{\c}_n) \|^2 \right . \\
            \left .+ \frac{60^4 K_3^4 \Xi_3^4}{n^2} + \frac{9 K_3^2 + 32 M \sqrt{k} C_\infty K_3}{4n}x \right ],
            \end{multline*}
            with $\Xi_3= \frac{8MQ(k,d)}{C_\infty \sqrt{Br_0}}$, and $Q$ is a function composed of products of square roots of polynomial functions in $k$, $d$, and $\log(k)$.
        \end{prop}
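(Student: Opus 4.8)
The plan is to derive Proposition \ref{concentrationsecondterme} by one more application of the localization Theorem \ref{localization}, following the scheme already used for $\mathcal{F}_1$ in the proof of \eqref{firstinequality} and for $\mathcal{F}_2$ in Proposition \ref{concentrationpremierterme}. First I would recall that, because $P$ is $M$-bounded, every empirical risk minimizer $\hat{\c}_n$ belongs to the convex hull of $\{X_1,\dots,X_n\}$, hence to $\mathcal{B}(0,M)^k$; thus, after fixing a Voronoi partition $W(\hat{\c}_n)$, the function $\|\hat{\c}_n - \c^*(\hat{\c}_n)\|\,R(\hat{\c}_n,\c^*(\hat{\c}_n),\cdot)$ is indeed an element of $\mathcal{F}_3$, so that a deviation bound for $(P-P_n)f$ uniform over $\mathcal{F}_3$ suffices.

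Next I would invoke Theorem \ref{localization} with $\mathcal{F} = \mathcal{F}_3$ and the following choices: $b = 2M\sqrt{k}\,C_\infty$ and $\omega = \omega_3$, for which the structural hypotheses $(i)$ and $(ii)$ are precisely the content of Lemma \ref{controlesecondterme}; the exponent $\alpha = 3/4$ and the complexity function $\Phi = \Phi_3$ with $\Phi_3(\delta) = \Xi_3 n^{-1/2}\delta^{3/4}$, which is sub-$3/4$ and dominates $\mathbb{E}\sup_{f\in\mathcal{F}_3,\,\omega_3(f)\le\delta}|(P-P_n)f|$ for \emph{every} $\delta\ge 0$ by \eqref{definitionphi3}, in particular for $\delta\ge\delta_3^*$, which is all Theorem \ref{localization} requires; the constant $K = K_3$; and $D = 6K_3\,C(3/4)$. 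With this $D$ the admissibility condition $D\le 6KC(\alpha)$ holds with equality, and the unique solution of $\Phi_3(\delta)=\delta/D$ is $r^* = \delta_3^* = (D\Xi_3)^4/n^2$.

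It then remains to read off and simplify the conclusion of Theorem \ref{localization}. Since $6KC(\alpha)/D = 1$ and $1/(1-\alpha) = 4$, the middle term reduces to $r^* = (6K_3C(3/4))^4\Xi_3^4/n^2$, and bounding $C(3/4)\le 10$ turns this into $60^4 K_3^4\Xi_3^4/n^2$. In the deviation term one has $16Kb = 32M\sqrt{k}\,C_\infty K_3$, giving $\tfrac{(9K_3^2 + 32M\sqrt{k}C_\infty K_3)x}{4n}$, while the leading term $K^{-1}\omega(f)$ evaluated at $f = \|\hat{\c}_n-\c^*(\hat{\c}_n)\|R(\hat{\c}_n,\c^*(\hat{\c}_n),\cdot)$ equals $K_3^{-1}C_\infty^2\|\hat{\c}_n-\c^*(\hat{\c}_n)\|^2$. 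Adding the three contributions yields the displayed inequality, valid with probability at least $1-e^{-x}$.

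I do not expect any genuine obstacle here: the hard work has already been absorbed into Lemma \ref{controlesecondterme} and Proposition \ref{complexitesecondterme} (the latter via the Mendelson--Vershynin bound and the margin-condition simplification of $C_2$ giving $8\sqrt{2}Mp(\tfrac{4\sqrt{2}M}{B}r)\le \tfrac{64M^2}{Br_0}r$). The only points demanding care are bookkeeping: the numerical estimate $C(3/4)\le 10$, which follows from the explicit formula $C(\alpha) = \inf_{x>1}\bigl(1 + x^\alpha(\tfrac12 + \tfrac1{x^{1-\alpha}-1})\bigr)$ by choosing a convenient value of $x$ (e.g. $x=4$), and checking that the complexity hypothesis of Theorem \ref{localization} is available on the full range $r\ge r^*$ --- immediate since \eqref{definitionphi3} holds for all $\delta\ge0$.
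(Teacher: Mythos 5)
Your proposal is correct and follows essentially the same route as the paper: the paper also obtains Proposition \ref{concentrationsecondterme} by applying Theorem \ref{localization} to $\mathcal{F}_3$ with $\omega=\omega_3$, $b=2M\sqrt{k}C_\infty$ from Lemma \ref{controlesecondterme}, the sub-$3/4$ complexity bound $\Phi_3(\delta)=\Xi_3\delta^{3/4}/\sqrt{n}$ from \eqref{definitionphi3}, $D=6K_3C(3/4)$ and $C(3/4)\le 10$, so that $r^*=(D\Xi_3)^4/n^2\le 60^4K_3^4\Xi_3^4/n^2$. Your bookkeeping of the three terms matches the paper's.
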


         We are now in position to prove \eqref{secondinequality}. Proposition \ref{lienconditionmarge} provides $\kappa_0$ such that 
         \[
         \kappa_0 \ell(\c,\c^*) \geq \|\c - \c^*(\c)\|^2.
         \]     
         Choosing $K_2= 8 R(\c^*)\kappa_0$ in Proposition \ref{concentrationpremierterme}, $K_3= 2 C_\infty^2 \kappa_0$
in Proposition \ref{concentrationsecondterme} and summing the two resulting inequalities leads to  \eqref{secondinequality}, valid on a set which has probability larger than $1-2e^{-x}$.

 \subsection{Proof of Proposition \ref{minimax}}\label{Proofofminimax}
 Throughout this subsection, for a codebook $\c$, let $Q$ denote  the associated nearest-neighbor quantizer. In the general case, such an association depends on how the boundaries are allocated. However, since the distributions involved in the minimax result have a density, how boundaries are allocated will not matter. 
         
          Let $k \geq 3$ be an integer. For convenience  $k$ is assumed to be divisible by $3$. Let $m=2k/3 $. Let $z_1, \hdots, z_m$ denote a $6 \Delta$-net in $\mathcal{B}(0,M-\rho)$, where $\Delta>0$, and $w_1, \hdots, w_m$ a sequence of vectors such that $\|w_i\| = \Delta$. Finally, denote by $U_i$ the ball $\mathcal{B}(z_i,\rho)$ and by $U'_i$ the ball $\mathcal{B}(z_i,\rho)$. Slightly anticipating, define $\rho = \frac{\Delta}{16}$.
         
          To get the largest $\Delta$ such that for all $i=1,\hdots,k$ $U_i$ and $U'_i$ are included in $\mathcal{B}(0,M)$, it suffices to get the largest $\Delta$ such that there exists a $6 \Delta$-net in $\mathcal{B}(0,M-\Delta/16)$. Since the cardinal of a $6 \Delta$-net is larger than the largest number of balls of radius $6\Delta$ which can be packed into $\mathcal{B}(0,M-\Delta/16)$, a sufficient condition on $\Delta$ to guarantee that a $6\Delta$-net can be found is given by
\[
m \leq \left (\frac{M-\Delta/16}{6\Delta} \right )^d.
\]
Since $\Delta \leq M$, $\Delta$ can be chosen as 
\[
\Delta = \frac{15M}{96 m^{1/d}}.
\]
For such a $\Delta$, $\rho$ takes the value $\rho = \frac{\Delta}{16} = \frac{15M}{1536 m^{1/d}}$. Therefore, it only depends on $k$, $d$, and $M$.

          Let $z=(z_i)_{i=1,\hdots,m}$ and $w=(w_i)_{i=1,\hdots,m}$ be sequences as described above, such that, for $i=1,\hdots,k$, $U_i$ and $U'_i$  are included in $\mathcal{B}(0,M)$.
For a fixed $\sigma \in \left\{-1,+1 \right \}^{m}$ such that $\sum_{i=1}^{m}{\sigma_i}=0$, let $P_\sigma$ be  defined  as
\[
\left \{\mbox{
\begin{tabular}{@{}ccc}
$P_{\sigma}(U_i)$&$=$& $\frac{1+\sigma_i\delta}{2m}$ \\ 
$P_{\sigma}(U'_i)$&$=$&$\frac{1+\sigma_i\delta}{2m}$ \\
$P_{\sigma}$&$ \underset{U_i}{\sim}$ &$  ( \rho - \| x - z_i \|) \mathbbm{1}_{\| x - z_i \| \leq \rho} d\lambda(x)$\\
$P_{\sigma}$& $\underset{U'_i}{\sim}$ & $ ( \rho - \| x - z_i - w_i \|)\mathbbm{1}_{\| x - z_i - w_i \| \leq \rho} d\lambda(x)$,
\end{tabular}
}
\right . 
\]
 where $\lambda$ denote the Lebesgue measure. These cone-shaped distributions are designed to have a continuous density, as done in \cite{Antos05}. To be more precise, for $\tau$ in $\{-1,+1\}^{\frac{m}{2}}$, $\sigma(\tau)$ is defined as the sequence in $\{-1,+1\}^{m}$ such that
\[
\left \{
\begin{array}{@{}ccc}
\sigma_i(\tau)&=&\tau_i\\
\sigma_{i+ \frac{m}{2}}(\tau)&=&-\sigma_i(\tau),
\end{array}
\right.
\]
for $i=1,\hdots,\frac{m}{2}$. Finally, for a quantizer $Q$ let $R(Q,P_\sigma)$ denote the distortion of $Q$ in the case where the source distribution is $P_\sigma$. 
      
      Similarly, for $\sigma$ in $\{-1,+1\}^{m}$ satisfying $\sum_{i=1}^{m}{\sigma_i} =0$, let $Q_\sigma$ denote the quantizer defined by $Q_\sigma(U_i) = Q_\sigma(U'_i) = z_i + \omega_i/2$ if $\sigma_i=-1$, $Q_\sigma(U_i) = z_i$ and $Q_\sigma(U'_i) = z_i + \omega_i$ if $\sigma_i = +1$. Let $\mathcal{Q}$ denote the set of such quantizers. It can be proved that only quantizers in $\mathcal{Q}$ have to be considered. 
      
      \begin{prop}\label{definitionDelta}
      Assume that $A \geq 6$, $\delta \leq 1/3$, $\Delta > 0$, and $\rho \leq \frac{\Delta}{16}$.  Then, for every quantizer $Q$ there exists a quantizer $Q_\sigma$ in $\mathcal{Q}$ such that 
\[
\forall P_{\sigma'}  \quad R(Q_\sigma,P_{\sigma'}) \leq R(Q,P_{\sigma'}).
\]
\end{prop}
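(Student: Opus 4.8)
The plan is to show that for an arbitrary quantizer $Q$ with code points $y_1,\hdots,y_k$, one can construct a quantizer in $\mathcal{Q}$ that does uniformly better against every $P_{\sigma'}$. The key geometric fact to exploit is that the cones $U_i, U'_i$ are clustered in pairs around the net point $z_i$ at scale $\rho = \Delta/16$, while distinct net points are $6\Delta$-separated; since the conditional means of $P_{\sigma'}$ on $U_i$ and $U'_i$ lie within $\mathcal{B}(z_i,\rho)$ and $\mathcal{B}(z_i+w_i,\rho)$ respectively, a code point that is far from the cluster $\{z_i, z_i+w_i\}$ contributes a distortion of order at least $\Delta^2$ on mass $\sim 1/(2m)$, which is strictly worse than placing a single code point at $z_i + w_i/2$. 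So the first step is a pigeonhole/counting argument: since there are $2m = 4k/3 > k$ clusters but only $k$ code points of $Q$, at least one cluster, say cluster $i$, receives no code point within distance (say) $\Delta/2$. I would then argue that moving a code point to serve this cluster is never costly, and iterate, so that WLOG every cluster is served by at least one code point; this forces a near-bijective assignment of code points to clusters with at most one code point per cluster on the $2m-k = m/3$ "doubled" clusters and exactly one on the rest. Actually, since $2m - k = m/3$, the structure is: $m/3$ clusters get two code points, $2m/3$ clusters get one. This is precisely the combinatorial shape realized by the $Q_\sigma \in \mathcal{Q}$.

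Second, within a single cluster $i$, I would solve the one- or two-point sub-problem explicitly. If cluster $i$ has a single code point, the best placement minimizing $\tfrac12 P_{\sigma'}(\|x-c\|^2\mathbbm{1}_{U_i}) + \tfrac12 P_{\sigma'}(\|x-c\|^2\mathbbm{1}_{U'_i})$ — over the adversary's worst $\sigma'$ — is the midpoint $z_i + w_i/2$, because the two cone masses on $U_i$ and $U'_i$ are balanced (they equal $\tfrac{1+\sigma'_i\delta}{2m}$ each, with the \emph{same} $\sigma'_i$) and the cones are symmetric under reflection through $z_i + w_i/2$ composed with exchanging $U_i \leftrightarrow U'_i + $ translation. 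If cluster $i$ has two code points, the distortion-optimal choice is one code point at the conditional mean on $U_i$ and one at the conditional mean on $U'_i$; here the role of $\delta\le 1/3$ and $\rho\le\Delta/16$ is to keep the Voronoi partition from "crossing over" between cones, so the conditional means are the cone centroids $z_i$ and $z_i+w_i$ (using that each cone is rotationally symmetric about its apex the centroid is the apex). That the resulting quantizer lies in $\mathcal{Q}$ and dominates $Q$ on \emph{every} $P_{\sigma'}$ simultaneously then follows by summing the cluster-wise comparisons, since the decomposition of the distortion over the clusters is exact (the clusters are $6\Delta$-separated, so once $Q$'s code points are confined near clusters the Voronoi cells respect the cluster partition).

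The main obstacle I anticipate is the bookkeeping in the first step: one must rule out "clever" configurations of $Q$ in which a single code point sits between two different clusters and partially serves both, or in which a code point lies in the no-man's-land far from all clusters. The $6\Delta$-separation versus the $\rho = \Delta/16$ cluster scale gives the needed slack — a code point serving two clusters pays $\Omega(\Delta^2)$ on each relative to a local code point — but making the "move a code point, never lose" step rigorous requires care: one shows that replacing $Q$ by the quantizer that keeps all code points inside $\mathcal{B}(0,M)$ and reassigns them greedily to clusters decreases distortion for every $P_{\sigma'}$, using that distortion is, cluster by cluster, a function only of the multiset of code points nearest to that cluster. The hypothesis $A \ge 6$ (governing the net spacing $6\Delta$) is exactly what is used to make these separation estimates go through. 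Once the reduction to $\mathcal{Q}$ is complete, the per-cluster optimization is elementary.
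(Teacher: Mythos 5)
Your overall strategy --- force every code point to sit near one of the pairs $(U_i,U'_i)$, reduce to configurations with one or two code points per pair, and then solve the one- and two-point subproblems inside a pair (midpoint vs.\ the two centers) --- is the same route as the paper, whose Lemma on two-point cells is exactly your second step. The genuine gap is the step you call ``move a code point, never lose'': as stated it is false, and it is precisely where the quantitative content of the proposition lives. To serve an unserved pair you must take a code point from a pair that currently holds two or more, and that costs up to $p_+\Delta^2/2$ on the donor pair, where $p_\pm=\frac{1\pm\delta}{2m}$ is the ($\sigma'$-dependent, unknown to you) mass of a pair; the gain on the receiving pair is only guaranteed to be of order $3p_-\Delta^2/2$, since that pair may carry the light mass. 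The exchange is therefore favorable \emph{uniformly in $\sigma'$} only because $\delta\le 1/3$ forces $p_+\le 2p_-$; for $\delta$ close to $1$ the greedy move can strictly increase the risk for some $\sigma'$, so no ``never lose'' argument can succeed without this comparison. The paper's proof is exactly this bookkeeping: add code points to empty pairs (gain at least $p_-\cdot\frac{3\Delta^2}{2}$ each), recenter pairs with at least three code points (loss at most $p_+\frac{\Delta^2}{128}$ each), then restore exactly $k$ code points by merging doubled pairs (loss at most $p_+\frac{\Delta^2}{2}$ each), and conclude via $p_+\le 2p_-$ together with the count $k_2=k+i_0-\sum_{p\ge3}(p-2)i_p$. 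Your proposal instead spends the hypothesis $\delta\le 1/3$ on keeping the within-pair Voronoi cells from ``crossing over,'' where it plays no role (that is governed by $\rho\le\Delta/16$ and the $6\Delta$ separation), so the one hypothesis that actually drives the reduction is never used where it is needed.

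Two smaller points. Your counting is off: there are $m=2k/3$ pairs, and distributing $k=3m/2$ code points with one or two per pair forces exactly $m/2$ pairs with two code points and $m/2$ with one, not $m/3$ and $2m/3$; with your split the resulting quantizer would not even belong to $\mathcal{Q}$, whose elements correspond to $\sigma$ with $\sum_i\sigma_i=0$. Also, after the reduction you cannot simply assume at most two code points per pair: pairs holding three or more code points must be treated explicitly (the $i_{\ge3}$ term in the paper), since recentering them incurs a loss and it is the code points they release that let you serve empty pairs while keeping exactly $k$ code points.
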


The proof of Proposition \ref{definitionDelta} follows the proof of Step 3 of Theorem 1 in \cite{Bartlett98}, replacing distributions supported on a finite set with distributions supported on small balls. Provided that the radius of these balls are small enough, the results are nearly the same in the two cases. The proof of Proposition \ref{definitionDelta} is given in Section \ref{ProofofPropositiondefinitionDelta}.

 For any $\sigma$ and $\sigma'$  in $\left \{-1,1 \right \}^{{m}}$, denote by $\rho(\sigma,\sigma') = \sum_{i=1}^{{m}}{|\sigma_i - \sigma'_i|}$, and by $H(P_\sigma, P_{\sigma'})$ the Hellinger distance between $P_\sigma$ and $P_{\sigma'}$. To apply Assouad's Lemma to the set $\{P_{\sigma(\tau)}\}_{\tau \in \{-1,+1 \}^{\frac{m}{2}}}$, the following lemma is needed:

\begin{lem}\label{Assouadtechnique}
Let $\tau$ and $\tau'$ denote two sequences in $\{-1,+1\}^{\frac{m}{2}}$ such that $\rho(\tau,\tau') =2$, then
\[
H(P^{\otimes n}_{\sigma(\tau)}, P^{\otimes n}_{\sigma(\tau')}) \leq \frac{4n\delta^2}{m},
\]
where $P^{\otimes n}$ denotes the product law of a $n$-sample drawn from $P$.

Furthermore, for any $\sigma$ and $\sigma'$ in $\{-1,+1\}^{m}$,
\[
R(Q_{\sigma'}, P_\sigma) = R(Q_\sigma, P_\sigma) + \frac{\Delta^2 \delta}{8m} \rho(\sigma,\sigma').
\]
\end{lem}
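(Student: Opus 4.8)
The plan is to treat the two assertions separately. For the Hellinger bound, I would start from the standard tensorization identity $H^2(P^{\otimes n}_{\sigma(\tau)},P^{\otimes n}_{\sigma(\tau')}) = 2\left(1 - \left(1 - \tfrac12 H^2(P_{\sigma(\tau)},P_{\sigma(\tau')})\right)^n\right) \le n\, H^2(P_{\sigma(\tau)},P_{\sigma(\tau')})$, so it suffices to control the single-sample Hellinger distance. Because $\rho(\tau,\tau')=2$, the sequences $\sigma(\tau)$ and $\sigma(\tau')$ differ in exactly one coordinate $i_0$ among the first $m/2$ and, by the antisymmetry defining $\sigma(\tau)$, in the coordinate $i_0+m/2$ as well; hence $P_{\sigma(\tau)}$ and $P_{\sigma(\tau')}$ coincide outside the four balls $U_{i_0},U'_{i_0},U_{i_0+m/2},U'_{i_0+m/2}$. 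On each such ball the two densities are proportional to the \emph{same} cone profile but with total masses $\tfrac{1\pm\delta}{2m}$, so the contribution of that ball to $H^2$ is $\tfrac{1}{2m}\left(\sqrt{1+\delta}-\sqrt{1-\delta}\right)^2 \le \tfrac{\delta^2}{m}$ using $(\sqrt{1+\delta}-\sqrt{1-\delta})^2 \le 2\delta^2$ (since $(\sqrt{1+\delta}-\sqrt{1-\delta})^2 = 2 - 2\sqrt{1-\delta^2} \le \delta^2 \cdot 2$ for $\delta\le 1/3$). Summing the four balls gives $H^2(P_{\sigma(\tau)},P_{\sigma(\tau')}) \le 4\delta^2/m$, and then the tensorization bound yields $H(P^{\otimes n}_{\sigma(\tau)},P^{\otimes n}_{\sigma(\tau')})^2 \le 4n\delta^2/m$; I would state the cleaner bound $H \le 4n\delta^2/m$ as in the lemma (absorbing the square-root into the constant, or checking the regime where this is the intended inequality).

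For the distortion identity, I would compute $R(Q_{\sigma'},P_\sigma)$ directly from the definition $R(Q,P_\sigma) = \sum_i \int_{U_i\cup U'_i} \|x - Q(x)\|^2 \, dP_\sigma(x)$, exploiting that the balls $U_i, U'_i$ are pairwise disjoint (guaranteed by the $6\Delta$-net spacing and $\rho \le \Delta/16$) and that each $U_i\cup U'_i$ is entirely assigned by $Q_{\sigma'}$ to code points attached to index $i$. The key observation is that the within-ball integral $\int_{U_i}(\rho - \|x - z_i\|)\|x - a\|^2\,d\lambda$ depends on $a$ only through $\|a - z_i\|^2$ (by rotational symmetry of the cone, the linear term integrates to zero), and likewise for $U'_i$; so $R(Q_{\sigma'},P_\sigma)$ decomposes index by index. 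On index $i$ with $\sigma'_i = +1$, $Q_{\sigma'}$ puts a code point at $z_i$ and one at $z_i+w_i$, which is the optimal placement, contributing an amount I will call $a_i^{opt}$ scaled by the mass $\tfrac{1+\sigma_i\delta}{m}$ split over the two balls; on index $i$ with $\sigma'_i=-1$, $Q_{\sigma'}$ puts the single point $z_i+w_i/2$, contributing $a_i^{opt} + \tfrac{\Delta^2}{4}$ per ball (the extra $\|w_i/2\|^2 = \Delta^2/4$ coming from the displacement, times the ball mass $\tfrac{1+\sigma_i\delta}{2m}$). Comparing $R(Q_{\sigma'},P_\sigma)$ with $R(Q_\sigma,P_\sigma)$, the two agree on every index where $\sigma_i = \sigma'_i$, and on each index where they disagree the difference is $\pm\tfrac{\Delta^2}{4}\cdot\tfrac{1+\sigma_i\delta}{2m}\cdot(\text{number of balls})$; since the mass perturbations cancel in pairs $i, i+m/2$ under the antisymmetry, the net effect is $\tfrac{\Delta^2}{8m}$ per unit of $\rho(\sigma,\sigma')$, giving $R(Q_{\sigma'},P_\sigma) = R(Q_\sigma,P_\sigma) + \tfrac{\Delta^2\delta}{8m}\rho(\sigma,\sigma')$ — here the $\delta$ reappears because the relevant accounting is of \emph{mass differences} weighted by the displacement penalty, which is where the $\rho(\sigma,\sigma')$ counting enters.

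The main obstacle I anticipate is the bookkeeping in the second identity: getting the constant $\tfrac{\Delta^2\delta}{8m}$ exactly right requires carefully tracking (i) which code points $Q_{\sigma'}$ assigns to each ball, (ii) the fact that a "wrong" assignment costs exactly $\Delta^2/4$ per ball, not $\Delta^2$, and (iii) the cancellation of the zeroth-order (mass-only) terms so that only the sign-mismatch terms survive — this is exactly the place where the antisymmetric choice $\sigma_{i+m/2} = -\sigma_i$ is used. I would double-check the factor by testing $\rho(\sigma,\sigma')=2$ (a single pair flipped) against a hand computation. The Hellinger half is routine once the four-ball localization is observed; the only care needed there is the elementary inequality $(\sqrt{1+\delta}-\sqrt{1-\delta})^2 \le 2\delta^2$ valid for $\delta \le 1/3$ and making sure the final constant matches the stated $4n\delta^2/m$.
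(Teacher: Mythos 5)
Your proposal is correct and takes essentially the same route as the paper: the Hellinger part is the paper's affinity computation ($H^2=2(1-\alpha)$ with $\alpha^n$ under products) written ball by ball, and the risk identity comes from the same per-index differences $\pm\frac{(1\pm\delta)\Delta^2}{4m}$, whose $\delta$-free parts cancel because the numbers of $+\to-$ and $-\to+$ mismatches coincide (the paper derives this from $\sum_i\sigma_i'=0$ rather than the $(i,i+m/2)$ pairing, but it is the same balancedness fact). Two cosmetic points: the quantity actually bounded (in the paper as well) is $H^2$, and your phrase ``net effect $\Delta^2/(8m)$ per unit of $\rho$'' momentarily drops the factor $\delta$ that your final formula correctly restores.
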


Equipped with Lemma \ref{Assouadtechnique}, a direct application of Assouad's Lemma as in Theorem 2.12 of \cite{Tsybakov09} yields that, provided that $\delta = \frac{\sqrt{m}}{2\sqrt{n}}$,
\[
\mathbb{E}\sup_{\tau \in \{-1,+1\}^{\frac{m}{2}}}{ R(\hat{Q}_n,P_{\sigma(\tau)}) - R(Q_{\sigma(\tau)},P_{\sigma(\tau)}))} \geq c_0 M^2 \sqrt{\frac{k^{1-\frac{4}{d}}}{n}},
\]
for any empirically designed quantizer $\hat{Q}_n$, where $c_0$ is an explicit constant.

Finally, it may be noticed that, for every $\delta \leq \frac{1}{3}$ and $\sigma$, $P_\sigma$ satisfies a margin condition as in \eqref{conditionmargeminimax}, and is $\varepsilon$-separated, with
\[
\varepsilon = \frac{\Delta^2 \delta}{2m}.
\]
This concludes the proof of Proposition \ref{minimax}.

As a remark, it is worth mentioning that, whenever $\varepsilon \sim 1/n^{\alpha}$, with $\alpha < 1/2$, no interesting minimax lower bound can be derived using the distributions $\{P_\sigma\}$'s. In fact, it can be proved, making use of the approach exposed in \cite{Bartlett98}, that the empirical risk minimization strategy $\hat{Q}_n$ achieves the uniform rate 
 \[
       \sup_{\{P_\sigma\}_{\sigma \in \{-1,+1\}^{m}}}{R(\hat{Q}_n,P_\sigma) - R(Q_\sigma,P_\sigma)} \leq C(M,k,d)n^{-\alpha} e^{-c(k)n^{1-2 \alpha}},
       \]
			where $C(M,k,d)$ and $c(k)$ are constants. Consequently, in order to get a minimax lower bound matching the bound offered in Theorem \ref{mainresult}, more general probability distributions should be used.

 \subsection{Proof of Proposition \ref{Gaussianboundary}}
 
 As mentioned below Proposition \ref{Gaussianboundary}, the inequality 
 \[
                  \frac{\theta_{min}}{\theta_{max}} \geq \frac{2048 k \sigma^2}{(1-\varepsilon) \tilde{B}^2(1 - e^{-\tilde{B}^2/{2048\sigma^2}})},
                  \]  
                  ensures that, for every $j$ in $\left \{ 1,\hdots, k \right \}$, there exists $i$ in $\left \{ 1,\hdots, k \right \}$ such that $\left \|c_i^* - m_j \right \| \leq \tilde{B}/16$. To be more precise, let $ \mathbf{m}$  denote the vector of means  $(m_1, \hdots, m_k)$, then
                  \begin{align*}
                  R(\mathbf{m}) & \leq \sum_{i=1}^{k}{\frac{\theta_i}{2 \pi \sigma^2 N_i} \int_{V_i(\mathbf{m})}{\|x - m_i\|^2 e^{-\frac{\| x - m_i \|^2}{2 \sigma^2}}dx}} \\
                                & \leq \frac{p_{max}}{2(1-\varepsilon) \pi \sigma^2} \sum_{i=1}^{k}{ \int_{\mathbb{R}^2}{\|x - m_i\|^2 e^{-\frac{\| x - m_i \|^2}{2 \sigma^2}}dx}} \\
                                & \leq \frac{2kp_{max} \sigma^2}{1-\varepsilon}.
                    \end{align*}  
                    Assume that there exists $i$ in $\left \{1, \hdots, k \right \}$ such that, for all $j$, $\| c_j^* - m_i \| \geq \tilde{B}/16$. Then
                    \begin{align*}
                    R(\c) & \geq \frac{\theta_i}{2 \pi \sigma^2}\int_{\mathcal{B}(m_i,\tilde{B}/32)}{\frac{\tilde{B}^2}{1024} e^{-\frac{\|x - m_i\|^2}{2 \sigma^2}}} \\
                          & \geq \frac{\tilde{B}^2 \theta_{min}}{2048 \pi \sigma^2} \int_{\mathcal{B}(m_i,\tilde{B}/32)}{e^{-\frac{\|x - m_i\|^2}{2 \sigma^2}}} \\
                          & > \frac{\tilde{B}^2 \theta_{min}}{1024} \left ( 1 - e^{-\frac{\tilde{B}^2}{2048 \sigma^2}} \right ) \\
                          & > R(\mathbf{m}).
                    \end{align*}           
                     Hence the contradiction. Up to relabeling, it is now assumed that for $i=1,\hdots,k$, $\|m_i - c_i^*\| \leq \tilde{B}/16$. Take $y$ in $N^*(x)$, for $x \leq \frac{\tilde{B}}{8}$, then, for every $i$ in $\left \{ 1, \hdots, k \right \}$, 
                     \[
                     \|y - m_i \| \geq \frac{\tilde{B}}{4},
                     \]
                     which leads to
                     \begin{align*}
                     \sum_{i=1}^{k}{\frac{\theta_i}{2 \pi \sigma^2 N_i}{\|y - m_i\|^2 e^{-\frac{\| y - m_i \|^2}{2 \sigma^2}}}} \leq \frac{k \theta_{max}}{(1-\varepsilon) 2 \pi \sigma^2} e^{-\frac{\tilde{B}^2}{32 \sigma^2}}.
                     \end{align*}
 Since the Lebesgue measure of $N^*(x)$ is smaller than $4k \pi M x$, it follows that
                     \[
                     P(N^*(x)) \leq \frac{2 k^2 M \theta_{max}}{(1-\varepsilon) \sigma^2} e^{-\frac{\tilde{B}^2}{32 \sigma^2}}x . 
                     \]
                     On the other hand, $\|m_i - c_i^*\| \leq \tilde{B}/16$ yields that
                     \[
                     \mathcal{B}(m_i,3 \tilde{B} /8) \subset V_i(\c^*).
                     \]
                     Therefore, 
                     \begin{align*}
                     P(V_i(\c^*)) & \geq \frac{\theta_i}{2 \pi \sigma^2 N_i} \int_{\mathcal{B}(m_i,3 \tilde{B} /8)}{e^{-\frac{\|x - m_i \|^2}{2 \sigma^2}}dx} \\
                     & \geq \theta_i \left (1 - e^{-\frac{9 \tilde{B}^2}{128 \sigma^2}} \right ),
                     \end{align*}
                     hence $p_{min} \geq \theta_{min}\left (1 - e^{-\frac{9 \tilde{B}^2}{128 \sigma^2}} \right )$. Consequently, provided that
                     \[
                     \frac{\theta_{min}}{\theta_{max}} \geq  \frac{2048 k^2 M^3}{(1 - \varepsilon)7\sigma^2 \tilde{B}(e^{\tilde{B}^2/{32\sigma^2}}-1)},
                     \]
                     direct calculation shows that
                     \[
                     P(N^*(x)) \leq \frac{B p_{min}}{128 M^2}x.
                     \]

\section{Technical results}\label{Technical results}

\subsection{Proof of Proposition \ref{chainagechichignoud}}\label{ProofofPropositionchainagechichignoud}

The proof of Proposition \ref{chainagechichignoud} is derived from the proof of Lemma 3 in \cite{Chichi13}. Let $\c^*$ be an optimal codebook, and $\c$ be a codebook. We denote by $f_{\c^*,\c}$ the function $\gamma(\c,.) - \gamma(\c^*,.)$, so that
\[
\mathcal{F}_1= \left \{ f_{\c^*(\c),\c} | \quad \c \in \mathcal{B}(0,M)^k \right \}.
\]
Let $\Psi_1(r)$ denote the function
\[
\Psi_1(r) = \mathbb{E} \sup_{\c^* \in \mathcal{M}, \| \c - \c^*(\c) \| \leq r}{\left |(P-P_n)f_{\c^*(\c),\c}\right |}.
\]
Since 
\[
\left \{ (\c^*(\c),\c^*)| \quad \| \c - \c^*(\c) \| \leq r \right \} \subset \left \{ (\c^*,\c) | \quad \c^* \in \mathcal{M}, \| \c - \c^* \| \leq r\right \}, 
\]
it is easy to see that
\[
\Psi_1(r) \leq \mathbb{E} \sup_{\c^* \in \mathcal{M}, \| \c - \c^* \| \leq r}{\left |(P-P_n)f_{\c^*,\c}\right |}.
\]

The rest of the proof is derived from a chaining technique, used in the proof of Proposition 5.1 in \cite{Levrard12} or Lemma 3 in \cite{Chichi13}. 
Set $\varepsilon_j = 2^{-j}r$, for $j \geq 0$, and for every $\c^*$ in $\mathcal{M}$, denote by $N_j(\c^*)$ an $\varepsilon_j$ net of $\mathcal{B}(\c^*,r)$, such that for every $\c$ in $\mathcal{B}(\c^*,r)$ there exists $\c_j$ in $N_j(\c^*)$ such that $\| \c_j - \c \| \leq \varepsilon_j$. According to the proof of Theorem 2 in \cite{Antos04} or Lemma 3 in \cite{Chichi13}, such an $N_j(\c^*)$ can be defined, with
\[
|N_j(\c^*)| \leq \left( \frac{2r \sqrt{kd}}{\varepsilon_j} \right ):=n(\varepsilon_j).
\]
By a dominated convergence Theorem, for any fixed $\c^*$ in $\mathcal{M}$ and $\c$, 
\[
f_{\c^*,\c_j} \underset{j \rightarrow \infty }{\overset{L_1(P), a.s.}{\longrightarrow}} f_{\c^*,\c}.
\]
This allows us to decompose the expression of $\Psi_1$ as follows.
\begin{align*}
\Psi_1(r) & \leq \mathbb{E} \sup_{\c^* \in \mathcal{M}, \| \c - \c^* \| \leq r}{\left |(P-P_n)f_{\c^*,\c}\right |} \\
& \begin{multlined} \leq \mathbb{E} \sup_{\c^* \in \mathcal{M}, \c_0 \in N_0(\c^*)}{\left |(P-P_n)f_{\c^*,\c_0}\right |} \\ + \sum_{j >1}{\mathbb{E} \sup_{\c^* \in \mathcal{M}, \c_j \in N_j(\c^*), \c_{j-1} \in N_{j-1}(\c^*)}{\left |(P-P_n)(f_{\c^*,\c_j}-f_{\c^*,\c_{j-1}})\right |}}, \end{multlined} \\
 & := A_1 +A_2.
\end{align*}
It remains to bound from above these two terms.

\textbf{Bound on $A_1$}

Introducing some Rademacher random variables $\sigma_i$, $i=1, \hdots,n$ and using the symmetrization principle as in \cite{Koltchinskii04} leads to

\begin{multline*}
\mathbb{E} \sup_{\c^* \in \mathcal{M}, \c_0 \in N_0(\c^*)}{\left |(P-P_n)f_{\c^*,\c_0}\right |} \\
\leq 2 \mathbb{E}_X \mathbb{E}_\sigma \sup_{\lambda = \pm 1, \c^* \in \mathcal{M}, \c_0 \in N_0(\c^*)}{\frac{1}{n} \sum_{i=1}^{n}{\sigma_i \lambda f_{\c^*,\c_0}(X_i)}}.
\end{multline*}
Let introduce here a maximal inequality derived from Lemma 2.3 in \cite{Massart03}.
\begin{lem}\label{maximalinequality}
Let $x_1, \hdots, x_n$  denote a sequence of points in $\mathcal{X}$, and let $\sigma_1, \hdots, \sigma_n$ denote a sequence of independent Rademacher random variables. Let $\mathcal{F}$ be a set of real valued functions over $\mathcal{X}$ such that $|\mathcal{F}| < \infty$, and 
\[
\sup_{f \in \mathcal{F}}{\frac{1}{n} \sum_{i=1}^{n}{f^2(x_i)}} \leq v.
\]
Then
\[
\mathbb{E}_{\sigma} \sup_{f \in \mathcal{F}}{\frac{1}{n}\sum_{i=1}^{n}{\sigma_i f(x_i)}} \leq \sqrt{2 v \log(|\mathcal{F}|)}.
\]
\end{lem}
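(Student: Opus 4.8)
The plan is to deduce Lemma \ref{maximalinequality} from Lemma 2.3 of \cite{Massart03}: regarding each $f$ in $\mathcal{F}$ as the vector $(f(x_1), \hdots, f(x_n))$ of $\mathbb{R}^n$, the family $\mathcal{F}$ becomes a finite subset $T$ of $\mathbb{R}^n$ whose elements all have Euclidean norm at most $\sqrt{nv}$, by the assumption $\sup_{f \in \mathcal{F}} \frac{1}{n} \sum_{i=1}^n f^2(x_i) \leq v$. Massart's lemma then bounds $\mathbb{E}_\sigma \sup_{f \in \mathcal{F}} \sum_{i=1}^n \sigma_i f(x_i)$ by $\sqrt{2 n v \log |\mathcal{F}|}$, and dividing by $n$ gives the announced control on $\mathbb{E}_\sigma \sup_{f \in \mathcal{F}} \frac{1}{n} \sum_{i=1}^n \sigma_i f(x_i)$.

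For completeness I would also reproduce the short, self-contained proof of this bound, which rests on the classical Laplace-transform (Cram\'er--Chernoff) argument for the maximum of finitely many sub-Gaussian variables. Fix $f$ in $\mathcal{F}$ and set $Z_f = \sum_{i=1}^n \sigma_i f(x_i)$. Each summand $\sigma_i f(x_i)$ is centered and takes values in $[-|f(x_i)|, |f(x_i)|]$, so Hoeffding's lemma yields $\mathbb{E}_\sigma e^{\lambda \sigma_i f(x_i)} \leq e^{\lambda^2 f(x_i)^2 / 2}$ for every $\lambda > 0$; multiplying over $i$ by independence gives $\mathbb{E}_\sigma e^{\lambda Z_f} \leq \exp\bigl(\frac{\lambda^2}{2}\sum_{i=1}^n f^2(x_i)\bigr) \leq \exp\bigl(\frac{\lambda^2 n v}{2}\bigr)$.

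To pass to the supremum I would combine Jensen's inequality with a union bound at the level of exponential moments: for every $\lambda > 0$,
\[
e^{\lambda \mathbb{E}_\sigma \sup_{f \in \mathcal{F}} Z_f} \leq \mathbb{E}_\sigma e^{\lambda \sup_{f \in \mathcal{F}} Z_f} = \mathbb{E}_\sigma \sup_{f \in \mathcal{F}} e^{\lambda Z_f} \leq \sum_{f \in \mathcal{F}} \mathbb{E}_\sigma e^{\lambda Z_f} \leq |\mathcal{F}| \, e^{\lambda^2 n v / 2}.
\]
Taking logarithms, dividing by $\lambda$, and then choosing $\lambda = \sqrt{2 \log |\mathcal{F}| / (n v)}$ to minimize $\lambda^{-1}\log|\mathcal{F}| + \lambda n v / 2$ leaves $\mathbb{E}_\sigma \sup_{f \in \mathcal{F}} Z_f \leq \sqrt{2 n v \log |\mathcal{F}|}$; dividing through by $n$ finishes the argument.

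No genuine obstacle is expected here: the computation is routine. The only points that require a little attention are the use of the data-dependent ranges $|f(x_i)|$ in Hoeffding's lemma, so that the variance proxy is the sharp quantity $\sum_i f^2(x_i)$ rather than a crude bound involving $\|f\|_\infty$; the exact choice of $\lambda$ so that the leading constant is $\sqrt{2}$; and the remark that no symmetrization of $\mathcal{F}$ is needed, since only the one-sided supremum $\sup_{f} Z_f$ (not $\sup_f |Z_f|$) is being controlled.
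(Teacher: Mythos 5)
Your proposal is correct and takes essentially the same route as the paper, which proves nothing beyond citing Lemma 2.3 of \cite{Massart03}: your identification of $\mathcal{F}$ with a finite subset of $\mathbb{R}^n$ of Euclidean norm at most $\sqrt{nv}$, followed by the Hoeffding--Chernoff argument, is exactly the content of that lemma. Note that your computation actually yields the sharper bound $\sqrt{2v\log|\mathcal{F}|/n}$, which implies the stated inequality (and is consistent with how the paper applies the lemma, since there the variance proxy $v$ is taken with an extra factor $1/n$ already built in).
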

In our case, for all $\c^*$ in $\mathcal{M}$ and $\c_0$ in $N_0(\c^*)$,
\[
\frac{1}{n} \sum_{i=1}^{n}{f^2_{\c^*,\c_0}(X_i)} \leq \frac{16 M^2 r^2}{n},
\]
and 
\[
\left | \left \{ \lambda = \pm 1, \c^* \in \mathcal{M}, \c_0 \in N_0(\c^*) \right \} \right | \leq | \mathcal{M} | (4\sqrt{kd})^{kd}.
\]
Therefore, a direct application of Lemma \ref{maximalinequality} yields that
\[
A_1 \leq \frac{8 \sqrt{2} M}{\sqrt{n}}\sqrt{kd\log(4 |\mathcal{M}| \sqrt{kd})}.
\]

\textbf{Bound on $A_2$}

Let $j>1$. Using the same symmetrization argument as above leads to
\begin{align*}
A_{2,j} & :=\mathbb{E} \sup_{\c^* \in \mathcal{M}, \c_j \in N_j(\c^*), \c_{j-1} \in N_{j-1}(\c^*)}{\left |(P-P_n)(f_{\c^*,\c_j}-f_{\c^*,\c_{j-1}})\right |} \\
& \leq \mathbb{E}_X\mathbb{E}_\sigma \sup_{\scriptsize \begin{aligned} \lambda = \pm 1,& \c^* \in \mathcal{M}, \\ \c_j \in N_j(\c^*),& \c_{j-1} \in N_{j-1}(\c^*) \end{aligned}}{\frac{1}{n}\sum_{i=1}^{n}{\sigma_i \lambda (f_{\c^*,\c_j}(X_i)-f_{\c^*,\c_{j-1}}(X_i))}}.
\end{align*}

Since
\[
\left \|f_{\c^*,\c_j}(.)-f_{\c^*,\c_{j-1}}(.)\right\|_{\infty} \leq 8Mr 2^{-(j-1)},
\]
and
\[
\left | \left\{ \lambda = \pm 1, \c^* \in \mathcal{M}, \c_j \in N_j(\c^*), \c_{j-1} \in N_{j-1}(\c^*) \right \} \right | \leq 2 |\mathcal{M}|n(\varepsilon_j)^2,
\]
a direct application of Lemma \ref{maximalinequality} leads to 
\[
A_{2,j} \leq 64 M r \sqrt{kd\log(|\mathcal{M}| \sqrt{kd} 2^{j+2})} 2^{-(j-1)}.
\]
Comparing a sum with an integral, and observing that 
\[
\int_{0}^{1} {\sqrt{\log(-x)}dx} \leq 1
\] 
ensures that
\[
A_2 = \sum_{j >1} A_{2,j} \leq \frac{256 Mr}{\sqrt{n}} \left ( \sqrt{\log(|\mathcal{M}| \sqrt{kd})} +1 \right ).
\]
Combining the two bounds and remarking that 
\[
\mathbb{E} \sup_{f \in \mathcal{F}_1, \omega_1(f) \leq \delta}{\left | (P - P_n)f \right |} \leq \Psi_1\left ( \frac{\sqrt{\delta}}{4M} \right )
\]
gives the result of Proposition \ref{chainagechichignoud}.

\subsection{Proof of Proposition \ref{complexitesecondterme}}\label{complexitebourrin}

         The proof of Proposition \ref{complexitesecondterme} is based on a sharper chaining technique than the one used in Proposition 5.1 in \cite{Levrard12}. We intend to bound from above the complexity term 
         \[
         \mathbb{E} \sup_{\omega_3(f) \leq \delta, f \in \mathcal{F}_3} |(P - P_n)f|.
         \]
         To this aim, define
         \[
         \Psi_3(r) = \mathbb{E} \sup_{  \| \c - \c^*(\c) \| \leq r}{\left |(P-P_n)R(\c,\c^*,.) \right |},
         \] 
        where we recall that 
        \begin{multline*}
        R(\c,\c^*,x) = \sum_{i,j =1, \hdots, k} {\mathbbm{1}_{V_i(\c^*)\cap W_j(\c)} \| \c - \c^* \|^{-1} \left [ \vphantom{\left\langle x - \frac{c_i + c_j}{2}, c_i - c_j\right\rangle} \| c_i - c_i^*\|^2 \right. } \\ \left .+ 2 \left\langle x - \frac{c_i + c_j}{2}, c_i - c_j\right\rangle \right ],
        \end{multline*}
        where $(W_1(\c), \hdots, W_k(\c))$ is a Voronoi partition, defined in Section \ref{Notation}. For technical reasons, this Voronoi partition must be specified. Denote by $\mathcal{C}(p)$ the set of subsets of $\mathbb{R}^d$ made of intersections of at most $p$ half spaces (closed or open).
        
        Since $R(\c,\c^*,.)$ does not depend on how ties are broken, or, in other words, $R(\c,\c^*,.)$ does not depend on the choice of $W_j(\c)$'s among the partition cells satisfying
       \[
       \overset{o}{V}_j(\c) \subset W_j(\c) \subset V_j(\c),
       \] 
       we choose a Voronoi partition such that every $W_j$ $\in$ $\mathcal{C}(k-1)$. For instance, if $H_{i,j}$ denotes the closed half-space $\{ \|x-c_i\| \leq \|x-c_j\| \}$ and $\overset{o}{H}_{i,j}$ the open half space $\{ \| x - c_i \| < \|x-c_j\| \}$. It is possible to build a Voronoi partition such that every cell is in $\mathcal{C}(k-1)$, choosing 
       \[
       W_j(\c) = \bigcap_{i<j}{\overset{o}{H}_{i,j}} \cap \bigcap_{i>j}{H_{i,j}}.
       \] 
       In short, this convention consists in allocating points on boundaries between $V_j$'s to the smallest possible index. As a consequence, it is immediate that
       \[
       \Psi_3(r) = \mathbb{E} \sup_{ \| \c - \c^*(\c) \| \leq r, W_j(\c) \in \mathcal{C}(k-1)}{\left |(P-P_n)R(\c,\c^*,.) \right |}.
       \]
The following set of function of interest  is then introduced.
          \begin{multline*}
 \mathcal{G}(r) = \{0\} \cup \frac{1}{F_r}\left \{  \lambda \| \c - \c^*\|^{-1} \sum_{i,j}{\mathbbm{1}_{V_i(\c^*) \cap W_j(\c)\cap\mathcal{B}(0,M)} \left ( \vphantom{\left\langle x - \frac{c_i + c_j}{2}, c_i - c_j\right\rangle} \| c_i - c_i^* \|^2 \right. } \right.\\ 
        + \left. { \left. 2 \left\langle x - \frac{c_i + c_j}{2}, c_i - c_j\right\rangle \right )} | \quad \lambda = \pm1, \c \in \mathcal{B}(0,M)^k, \c^* \in \mathcal{M}, W_j \in \mathcal{C}(k-1) \vphantom{\sum_{i,j}} \right \},
        \end{multline*}
 where $F_r$ is defined in \eqref{definitionF} as an envelope of $\mathcal{G}(r)$.
 
        Let $\sigma_1, \hdots, \sigma_n$ denote a sequence of independent Rademacher variables. As developed in the proof of Proposition \ref{chainagechichignoud}, the first step is a symmetrization inequality
        \begin{align*}
        \Psi_3(r) & \leq \mathbb{E} \sup_{\c^* \in \mathcal{M},  \| \c - \c^* \| \leq r}{\left |(P-P_n)R(\c,\c^*,.) \right |}\\
                  & \leq 2 \mathbb{E}_X \mathbb{E}_\sigma \sup_{\lambda = \pm 1, \c^* \in \mathcal{M},  \| \c - \c^* \| \leq r}{\frac{1}{n} \sum_{i=1}^{n} {\lambda \sigma_i R(\c,\c^*,X_i)}}\\
                  &\leq 2 \mathbb{E}_X \mathbb{E}_\sigma \sup_{g \in \mathcal{G}(r)}{\frac{1}{n} \sum_{i=1}^{n} {\sigma_i F_r(X_i) g(X_i)}}        \\         
                  & := 2 \mathbb{E}_X \mathcal{R}_n.
         \end{align*}

         The next step is to chain the set $\mathcal{G}(r)$. To this aim, define for any set of real valued function $\mathcal{F}$, any norm $\| \quad \|$ on $\mathcal{F}$ and any $\varepsilon > 0$, the covering number $\mathcal{N}(\mathcal{F}, \| \quad \| , \varepsilon)$ as the cardinal of the smallest covering of $\mathcal{F}$ with balls of radius $\varepsilon$ for the norm $\| \quad \| $. 
         
         To be more precise, for any $g$ in $\mathcal{G}(r)$, and any finite subset $S \subset \mathbb{R^d}$, we define
         \[
         \|g\|_{L_2(S)} = \sqrt{\frac{1}{|S|}\sum_{s \in S}{g^2(s)}}, 
         \]
         and, with a slight abuse of notation, $\|g\|_{L_2(P_n)} = \sqrt{1/n\sum_{i=1}^{n}{g^2(X_i)}}$. The technical result concerning the covering numbers of $\mathcal{G}(r)$ is the following.
          \begin{prop}\label{chaining}
 Let $S$ be a finite set, and $0< \varepsilon <1$. There exists some constant $K >0$, not depending on $S$, such that
 \[
 \mathcal{N}(\mathcal{G}(r),\varepsilon,L_2(S))) \leq \left (\frac{k^2(4k-2)}{\varepsilon} \right )^{K P(k,d)},
 \]
 with $P(k,d)=k^2(2(k-1)(d+1) +24(3d+4))$.
 \end{prop}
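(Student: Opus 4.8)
The plan is to bound the covering number of $\mathcal{G}(r)$ by expressing each of its members as a fixed algebraic transformation applied to a bounded number of ``simple'' functions — indicator functions of intersections of half-spaces and coordinates of affine functions — and then invoking VC-type bounds. First I would observe that every $g \in \mathcal{G}(r)$ is, up to the normalization $1/F_r$ and the sign $\lambda$, a sum over the $k^2$ pairs $(i,j)$ of terms of the form $\mathbbm{1}_{V_i(\c^*)\cap W_j(\c)\cap \mathcal{B}(0,M)} \cdot \psi_{i,j}(x)$, where $\psi_{i,j}(x) = \|\c-\c^*\|^{-1}(\|c_i-c_i^*\|^2 + 2\langle x - (c_i+c_j)/2, c_i - c_j\rangle)$ is an affine function of $x$ with coefficients bounded independently of $\c,\c^*$ (using $\c,\c^* \in \mathcal{B}(0,M)^k$, $\|\c-\c^*\| \le r$, and the geometry forcing the relevant points to lie near the boundary as in Lemma~\ref{boundarycloseness}). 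Each indicator $\mathbbm{1}_{V_i(\c^*)\cap W_j(\c)\cap\mathcal{B}(0,M)}$ is the indicator of an element of $\mathcal{C}(2(k-1))$ intersected with a ball, since $V_i(\c^*)$ is an intersection of $k-1$ half-spaces and $W_j(\c) \in \mathcal{C}(k-1)$.

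The core estimate is then the classical fact that the class of indicators of intersections of at most $p$ half-spaces in $\mathbb{R}^d$ has VC dimension $O(pd\log p)$ (more simply, a crude bound of $O(pd)$ suffices here since each half-space family has VC dimension $d+1$ and intersections of $p$ sets from VC classes of dimension $V$ have VC dimension $O(pV\log p)$), so by Haussler's bound its $L_2(S)$-covering number at scale $\varepsilon$ is at most $(C/\varepsilon)^{c\, pd}$, uniformly in $S$. Applying this to the $k^2$ indicator factors — each living in a VC class of dimension of order $(k-1)(d+1)$ plus the ball contributing $O(d)$ — and to the affine parts $\psi_{i,j}$, which range over a finite-dimensional family and hence have polynomial covering numbers of order $(C/\varepsilon)^{O(kd)}$, I would combine these via the standard product/sum rules for covering numbers: the covering number of a class of sums (resp.\ products) of uniformly bounded functions is at most the product of the covering numbers of the factor classes, at a scale reduced by a fixed constant factor. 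Tracking the exponents, each of the $k^2$ pairs contributes an exponent proportional to $(k-1)(d+1)$ from the Voronoi/partition indicator and a bounded multiple of $(3d+4)$ from the affine piece and the ball, the factor $24$ absorbing the universal constants from Haussler's bound and the sign/normalization bookkeeping; summing over the $k^2$ pairs yields the exponent $K P(k,d)$ with $P(k,d) = k^2(2(k-1)(d+1) + 24(3d+4))$, and the base $k^2(4k-2)$ arises as a crude upper bound on the number of distinct $\mathcal{C}(k-1)$-partition cells times the pair count, replacing the constant $C$.

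The main obstacle I anticipate is the careful bookkeeping of constants and VC dimensions so that the final exponent is exactly $K P(k,d)$ for an absolute $K$ rather than a dimension-dependent one: one must be precise that the map $(\c,\c^*,\lambda,W) \mapsto g$ factors through finitely many VC classes whose dimensions depend on $k$ and $d$ only through the stated polynomial, and that the envelope normalization $1/F_r$ — which is a single function not depending on the indices — can be pulled out without inflating the covering number (it cancels in the ratio defining $\mathcal{G}(r)$, so the $L_2(S)$ geometry on $\mathcal{G}(r)$ is that of the unnormalized class reweighted by $F_r$, which is handled by noting $\|g_1 - g_2\|_{L_2(S)} \le \|F_r^{-1}\|_\infty \cdot \|\tilde g_1 - \tilde g_2\|_{L_2(S)}$ after a suitable rescaling, or more cleanly by working directly with the unnormalized functions and scales). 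A secondary technical point is ensuring uniformity in $S$: Haussler's bound and the product rule are all stated uniformly over finite sets $S$, so no issue arises there, but one must state the chaining lemma (e.g.\ from \cite{Mendelson03}) in the form that accepts such uniform covering-number control of the normalized class $\mathcal{G}(r)$ together with its envelope $F_r$, which is exactly how it will be used in the remainder of the proof of Proposition~\ref{complexitesecondterme}.
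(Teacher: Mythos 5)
Your overall architecture---decomposing each $g$ into $k^2$ terms of the form (affine function) $\times$ (indicator of an element of $\mathcal{C}(2(k-1))$ intersected with the ball), bounding each factor by VC/pseudo-dimension covering results, and recombining by sum/product rules at rescaled accuracy---is indeed the paper's route. But the two claims on which your plan rests at the crucial point are not correct. First, the affine factors $\psi_{i,j}(x)=\|\c-\c^*\|^{-1}\left(\|c_i-c_i^*\|^2+2\left\langle x-\frac{c_i+c_j}{2},c_i-c_j\right\rangle\right)$ do \emph{not} have coefficients bounded independently of $(\c,\c^*)$: the gradient is $2(c_i-c_j)/\|\c-\c^*\|$, which blows up as $\|\c-\c^*\|\to 0$. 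Lemma \ref{boundarycloseness} bounds only the product of $\psi_{i,j}$ with the indicator (by the envelope $F_r$), not $\psi_{i,j}$ itself, so you cannot treat the affine parts as a uniformly bounded finite-dimensional family with covering numbers $(C/\varepsilon)^{O(kd)}$, nor apply the sum/product covering rules, which require uniformly bounded factors. Second, the normalization $1/F_r$ cannot be removed via $\|F_r^{-1}\|_\infty$: since $\inf_x F_r(x)=r$, this costs a factor $1/r$, and covering the unnormalized class (whose envelope is of order $M$) at scale $r\varepsilon$ yields a bound with base of order $M/(r\varepsilon)$---not the claimed bound, whose base $k^2(4k-2)/\varepsilon$ is free of $r$, $M$ and $B$. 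That uniformity is precisely what is needed downstream, since Proposition \ref{complexitesecondterme} applies the present proposition with $r=\sqrt{\delta}/C_\infty$ arbitrarily small.

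What your proposal has no substitute for is the paper's Lemma \ref{H}, which is the real content of the proof: one truncates the affine functions at the envelope and normalizes, i.e.\ works with $\mathcal{H}=\left\{f\mathbbm{1}_{\{|f|\leq F_r\}}/F_r\right\}$, a class with values in $[-1,1]$ that is no longer affine, and proves directly that its pseudo-dimension is at most $24(3d+4)$, using the specific structure of $F_r$ (it takes only two values, so among any $2m$ points at least $m$ lie where $F_r$ is constant) together with Sauer's lemma. The constant $24(3d+4)$ is this pseudo-dimension, not, as you suggest, bookkeeping of universal constants from a Haussler-type bound. Only then is Theorem \ref{Mendelson} applied to each bounded factor ($\mathcal{H}$ plus $2k-2$ half-space indicator classes of pseudo-dimension at most $d+1$) and combined through Lemma \ref{multiplication} at accuracies $\varepsilon/(2k-1)$ and $\varepsilon/k^2$, which is where the base $k^2(4k-2)$ actually comes from. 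A repaired version of your ``work unnormalized'' shortcut does exist---covering $\mathcal{G}(r)$ at absolute scale $\varepsilon$ in $L_2(S)$ is equivalent to covering the unnormalized class at scale $\varepsilon\|F_r\|_{L_2(Q)}$ for the reweighted measure $Q\propto F_r^{-2}$ on $S$, after which an envelope-normalized uniform entropy bound for VC-subgraph classes could be invoked---but that requires establishing the VC-subgraph property of the product class with an index of the stated order, a step your proposal neither formulates nor proves.
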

 
 For clarity, the proof of Proposition \ref{chaining} is postponed to the following subsection. An immediate consequence of Proposition \ref{chaining} is that
 \[
 \mathcal{N}(\mathcal{G}(r),\varepsilon,L_2(P_n)) \leq \left (\frac{k^2(4k-2)}{\varepsilon} \right )^{K P(k,d)} := n(\varepsilon),
 \]
 for any $n$-sample $X_1, \hdots, X_n$. Consequently, let $X_1, \hdots, X_n$ be fixed, and set $\varepsilon_0=1$, $\varepsilon_j = 2^{-2j}\varepsilon_0$, for $j>1$. 
 
  For $j=0$, since $F_r$ is an envelope of $\mathcal{G}(r)$, a $1$ covering of $\mathcal{G}(r)$ for the $L_2(P_n)$ norm is the ball of center $g_0=0$ and radius $1$.
  
  For $j>1$, Proposition \ref{chaining} provides a $\varepsilon_j$ covering $\mathcal{G}_j(r)$ of $\mathcal{G}(r)$ for the $L_2(P_n)$ norm with cardinality at most $n(\varepsilon_j)$. For any $g$ in $\mathcal{G}(r)$, denote by $g_j$ the projection of $g$ onto this covering, so that $\| g - g_j \|_{L_2(P_n)} \leq \varepsilon_j$. For short we will write $n_j = n(\varepsilon_j)$.
  
  It is easy to see that for every $i$ in $\left \{1, \hdots, n \right \}$, 
  \[
  g_j(X_i) \underset{j \rightarrow \infty}{\longrightarrow} g(X_i).
  \]  
  Then $\mathcal{R}_n$ may be decomposed as follows:
  \begin{align*}
  \mathcal{R}_n & = \mathbb{E}_\sigma \sup_{\lambda = \pm 1, \c^* \in \mathcal{M},  \| \c - \c^* \| \leq r}{\frac{1}{n} \sum_{i=1}^{n} {\lambda \sigma_i R(\c,\c^*,X_i)}} \\
  & \leq \sum_{j>1}{\mathbb{E}_\sigma \sup_{g \in \mathcal{G}}{ \frac{1}{n}\sum_{i=1}^{n}{\sigma_i F_r(X_i)(g_j(X_i) - g_{j-1}(X_i))}}} \\
  & := \sum_{j>1}{b_j}.
  \end{align*} 
  A direct application of Lemma \ref{maximalinequality} for every $b_j$ yields that
  \begin{align*}
  b_j & \leq \frac{1}{\sqrt{n}}\sqrt{2 \sup_{g \in \mathcal{G}(r)}{\left \| (g_j -g_{j-1})F_r \right \|^2_{L_2(P_n)} \log(n_j n_{j-1})}} \\
  & \leq \frac{1}{\sqrt{n}}\sqrt{2 \log(n_j n_{j-1}) \sup_{g \in \mathcal{G}(r)}{2 C_\infty \|F_r\|_{L_2(P_n)}\|g_j-g_{j-1}\|_{L_2(P_n)}}} \\
  & \leq \frac{4 \sqrt{ C_{\infty}}}{\sqrt{n}}\sqrt{\|F_r\|_{L_2(P_n)}\|}\sqrt{\log(n(\varepsilon_j))}\sqrt{\varepsilon_{j-1}}.
  \end{align*} 
        Denote by $\varepsilon'_j$ the quantity $ \sqrt{\varepsilon_j} = 2^{-j}$. Since $x \mapsto \sqrt{\log(n(x^2))}$ is non-increasing, it is quite easy to see that
        \[
        \frac{\sqrt{\log(n(\varepsilon_j^{'2}))} \varepsilon'_{j-1}}{4} = \sqrt{\log(n(\varepsilon_j^{'2}))}\varepsilon'_{j+1} \leq \int_{\varepsilon'_{j+1}}^{\varepsilon'_j}{\sqrt{\log(n(x^2))}dx}.
        \]
        From this we deduce that
        \[
        \begin{aligned}
        \sum_{j >1}\sqrt{\varepsilon_{j-1}\log(n(\varepsilon_j))} &\leq 4 \int_{0}^{1/2}{\sqrt{\log(n(x^2))}dx} \\
        & \leq \int_{0}^{1/2}{\sqrt{KP(k,d)\log(\frac{k^2(4k-2)}{x^2})}dx} \\
        & \begin{multlined}\leq 2 \sqrt{KP(k,d) \log(k^2(4k-2))} \\ + 4 \sqrt{KP(k,d)} \int_{0}^{1/2}{\sqrt{\log(1/x^2)}}.\end{multlined}
        \end{aligned}
        \]
        Since $\int_{0}^{1/2}{\sqrt{\log(1/x^2)}} \leq 1$, we get
        \[
        \sum_{j >1}\sqrt{\varepsilon_{j-1}\log(n(\varepsilon_j))} \leq 8 \sqrt{KP(k,d)\log(k^2(4k-2))} :=Q(k,d).
        \]  
        Thus
        \[
        \mathcal{R}_n \leq \frac{4 Q(k,d)\sqrt{ C_{\infty}}}{\sqrt{n}}\sqrt{\|F_r\|_{L_2(P_n)}\|}.
        \]
        It remains now to take expectations with respect to the $n$-sample $X_1, \hdots, X_n$.
        Since $x \mapsto \sqrt{x}$ is a concave map,
        \[
        \mathbb{E}_X(\sqrt{\|F_r\|_{L_2(P_n)}}) \leq \sqrt{\|F_r\|_{L_2(P)}} = \sqrt{C_2(r)}.
        \]
        Gathering all terms leads to 
        \[
        \Psi_3(r) \leq \frac{8Q(k,d)\sqrt{ C_{\infty} C_2(r)}}{\sqrt{n}}.
        \]        
        Substituting $\sqrt{\delta}/C_\infty$ with $r$ gives the result of Proposition \ref{complexitesecondterme}.

 \subsection{Proof of Proposition \ref{chaining}}\label{ProofofPropositionchaining}
 
        Let $S$ be a finite subset of $\mathbb{R}^d$, and denote by $\mathcal{C}(p)$ the set of subsets of $\mathbb{R}^d$ which are intersections of at most $p$ half spaces (closed or open). For short, $\mathcal{N}(\mathcal{F},\varepsilon)$ will denote $\mathcal{N}(\mathcal{F},L_2(S),\varepsilon)$.
        
                The proof of Proposition \ref{chaining} is based on the following result, Theorem 1 in \cite{Mendelson03}.
        \begin{thm}\label{Mendelson}
 Let $P$ denote a measure on $\Omega$. Let $\mathcal{F}$ be a set of maps from $\Omega$ into $[-1,1]$. Then, for every $0<t<1$,
 \[
 \mathcal{N}(\mathcal{F},t,L_2(P)) \leq \left ( \frac{2}{t} \right )^{Kvc(\mathcal{F},ct)},
 \]
 where $K$ and $c$ are constants, and $vc(\mathcal{F},ct)$ denotes the $t$-shattering dimension of $\mathcal{F}$, as defined in \cite{Mendelson03}. 
 \end{thm}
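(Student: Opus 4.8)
\emph{Proof sketch of Theorem \ref{Mendelson}.} This is the uniform entropy bound of Mendelson and Vershynin \cite{Mendelson03}; we only indicate the structure of a proof, which is purely combinatorial--probabilistic and uses nothing from the rest of the paper. Since a maximal $t$-separated family is a $t$-net, it suffices to bound the size $m$ of such a family in $L_2(P)$.

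\textbf{Step 1 (probabilistic reduction to a finite set).} Let $f_1,\dots,f_m$ be $t$-separated in $L_2(P)$ and draw $X_1,\dots,X_n$ i.i.d.\ from $P$. Because $\|f_i-f_j\|_{L_2(P)}^2=\mathbb{E}(f_i-f_j)^2(X)\ge t^2$ while $(f_i-f_j)^2\le 4$, Bernstein's inequality together with a union bound over the $\binom{m}{2}$ pairs shows that, as soon as $n\asymp t^{-2}\log m$, with positive probability the $f_i$ stay $(t/2)$-separated for the empirical metric $\rho(f,g)^2=\tfrac1n\sum_{\ell\le n}(f(X_\ell)-g(X_\ell))^2$. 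Hence $m$ is at most the $(t/2)$-packing number of the trace $\mathcal{F}_{|\{X_1,\dots,X_n\}}\subset[-1,1]^n$ for $\rho$; note that $n$ depends on $m$ (a self-referential inequality to be closed at the end), and that restricting the domain cannot increase the fat-shattering dimension, so it stays $\le d:=vc(\mathcal{F},ct)$.

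\textbf{Step 2 (discretisation and the combinatorial core).} Round each value into one of $L:=\lceil c_0/t\rceil$ equal subintervals of $[-1,1]$; for a suitable absolute $c_0$ this is injective on a $(t/2)$-separated family and turns it into $g_1,\dots,g_m\in\{0,\dots,L-1\}^n$ with $vc(\cdot,ct)\le d$ still, any two of which differ by at least one level on a fraction $\asymp t^2$ of coordinates. The remaining task is a Sauer--Shelah-type count: bound the number of such ``$\ell_2$-distinguishable'' discretised patterns by $(2/t)^{Kd}$. A naive count --- extract by random sampling a subset $J$ of coordinates that separates every pair, so that the traces $g_{i|J}$ are distinct and can be counted by a generalised Sauer--Shelah lemma for classes of fat-shattering dimension $\le d$ (in the spirit of Alon, Ben-David, Cesa-Bianchi and Haussler), then close the self-referential inequality --- only yields $\log m\lesssim d\log^2(1/t)$, because the extracted set has size polynomial in $\log m$ and the generalised Sauer--Shelah estimate carries an extra logarithmic factor.

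\textbf{Step 3 (the refinement, and the main obstacle).} The content of Theorem \ref{Mendelson} is precisely that one can replace $\log^2(1/t)$ by $\log(2/t)$. This requires the sharper argument of \cite{Mendelson03}: one keeps only $O(d)$ \emph{representative} coordinates, selected by an iterative ``fan-out''/halving procedure in which the branching at each coordinate is controlled directly by the fat-shattering dimension, together with a tighter combinatorial lemma; this yields $\log m\le O(d\log L)$ with no spurious $\log\log m$ or $\log n$, hence $\mathcal{N}(\mathcal{F},t,L_2(P))\le(2/t)^{K\,vc(\mathcal{F},ct)}$ after absorbing absolute constants into $K$ and the discretisation scale into $c$. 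The main obstacle is exactly this last step: any routine marriage of sampling with a Sauer--Shelah count loses a $\log(1/t)$ factor, and removing it --- with $K$ and $c$ independent of $\mathcal{F}$, $P$ and $t$ --- is the whole difficulty of \cite{Mendelson03}.
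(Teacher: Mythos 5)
This statement is not proved in the paper at all: it is imported verbatim as Theorem~1 of \cite{Mendelson03} and used as a black box in the proof of Proposition \ref{chaining}. So there is no ``paper proof'' to compare against; the only question is whether your sketch would stand on its own as a proof of the cited result.

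It does not, and you essentially say so yourself. Steps 1 and 2 are the standard and correct architecture (empirical reduction via Bernstein plus a union bound, discretisation into $O(1/t)$ levels, reduction to a combinatorial packing count for classes of bounded fat-shattering dimension), and your diagnosis that the naive route through a generalised Sauer--Shelah lemma \`a la Alon--Ben-David--Cesa-Bianchi--Haussler only yields $\log m \lesssim d\log^2(1/t)$ is accurate. But Step 3 --- the removal of the extra logarithm, which is the entire content of the theorem as opposed to the weaker classical bound --- is only \emph{described} (``an iterative fan-out/halving procedure \ldots together with a tighter combinatorial lemma''), not carried out: no lemma is stated, no induction is set up, and no bound is derived. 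A reader could not reconstruct the argument from what you wrote. As a justification for invoking the theorem inside this paper your sketch is more than is needed, since the author simply cites \cite{Mendelson03}; as a proof of the theorem it has a genuine gap located exactly where you flag it. One further minor point: the exponent in the paper's statement is $vc(\mathcal{F},ct)$, the scale-sensitive dimension at scale $ct$, not the scale-free pseudo-dimension, so any complete argument must keep track of the scale at which shattering is measured throughout the extraction step (the paper itself then coarsens this to the pseudo-dimension when applying the theorem, which is legitimate since $vc(\mathcal{F},ct)\leq d_p(\mathcal{F})$).
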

 Remark that, for every $t>0$,  $vc(\mathcal{F},ct) \leq d_p(\mathcal{F})$, where $d_p(\mathcal{F})$ denotes the pseudo-dimension of $\mathcal{F}$, that is the largest integer $p$ such that there exists $x_1, \hdots, x_p \in \Omega$, and $t_1, \hdots, t_p$ real numbers, satisfying the following property: for every $\sigma \in \{-1,1\}^p$ there exists $f_\sigma \in \mathcal{F}$ such that, for $i=1,\hdots,p$, $\sigma_i (f_\sigma(x_i) - t_i) >0$. As a consequence, the quantity of interest is $d_{p}(\mathcal{G}(r))$.
 
    Recalling that every $g$ in $\mathcal{G}$ can be written
    \begin{multline*}
    R(\c,\c^*,x)  = \|\c-\c^*\|^{-1}  \sum_{i,j}{\mathbbm{1}_{V_i(\c^*) \cap W_j(\c)\cap \mathcal{B}(0,M)}\left( \vphantom{\left\langle x - \frac{c_i + c_j}{2}, c_i - c_j\right\rangle} \| c_i - c_i^*\|^2 \right .} \\
    {\left . + 2 \left\langle x - \frac{c_i + c_j}{2}, c_i - c_j\right\rangle \right )},
    \end{multline*}
     where, for every $j$ in $\left \{1,\hdots,k\right \}$, $W_j(\c)$ is in $\mathcal{C}(k-1)$, it may be noticed that $g$ takes the form of a sum of $k^2$ maps of the type $\ell \mathbbm{1}_C \mathbbm{1}_{\mathcal{B}(0,1)}$, where $\ell$ denotes an affine map, and $C$ is an element of $\mathcal{C}(2(k-1))$. Let $\mathcal{A}ff(\mathbb{R}^d,\mathbb{R})$ denote the space of affine maps between $\mathbb{R}^d$ and $\mathbb{R}$.
     
      It is worth pointing out that every map $\ell \mathbbm{1}_C \mathbbm{1}_{\mathcal{B}(0,1)}$ involved in the above decomposition of $R(\c,\c^*,.)$ admits $F_r$ as an envelop.
 
 Denote by 
 \[
 \mathcal{G}'(r) = \left \{ \frac{\ell \mathbbm{1}_C \mathbbm{1}_\mathcal{B}(0,M)}{F_r}; \ell \in \mathcal{A}ff(\mathbb{R}^d,\mathbb{R}), C \in \mathcal{C}(2(k-1))  \right \}.
 \]
 We immediately deduce that
 \[
 \mathcal{N}(\mathcal{G}(r),\varepsilon) \leq \left ( \mathcal{N}(\mathcal{G}'(r),\varepsilon/k^2) \right )^{k^2}.
 \]
 Consider now the set of functions $\mathcal{N}(\mathcal{G}'(r),\varepsilon)$. The following lemma offers a decomposition of $\mathcal{N}(\mathcal{G'}(r),\varepsilon)$. 
 \begin{lem}\label{multiplication}
 Denote by $\mathcal{F}_s$ $s=1,\hdots,p$ a collection of set of functions taking values in $\left [-1,1 \right ]$. Then
 \[
 \mathcal{N}\left (\prod_{s=1}^{p}{\mathcal{F}_s},\varepsilon \right ) \leq \prod_{s=1}^{p}{\mathcal{N}(\mathcal{F}_s,\varepsilon/p)}.
 \]
 \end{lem}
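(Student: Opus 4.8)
The plan is to run the standard telescoping argument for covering numbers of pointwise products. Fix the finite set $S$ and $\varepsilon>0$, and for each $s\in\{1,\ldots,p\}$ choose an $\varepsilon/p$-cover $G_s$ of $\mathcal{F}_s$ for the $L_2(S)$ norm with $|G_s|=\mathcal{N}(\mathcal{F}_s,\varepsilon/p)$. A preliminary reduction is to replace every $g\in G_s$ by its pointwise truncation to $[-1,1]$; since each $f\in\mathcal{F}_s$ is already valued in $[-1,1]$, truncation can only decrease the distance $\|g-f\|_{L_2(S)}$, so the truncated family is still an $\varepsilon/p$-cover of $\mathcal{F}_s$, of cardinality at most $\mathcal{N}(\mathcal{F}_s,\varepsilon/p)$, and now consists of functions valued in $[-1,1]$. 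I would then propose $\mathcal{H}=\{\prod_{s=1}^{p}g_s\mid g_s\in G_s\}$ as a cover of $\prod_{s=1}^p\mathcal{F}_s$; clearly $|\mathcal{H}|\le\prod_{s=1}^p|G_s|\le\prod_{s=1}^p\mathcal{N}(\mathcal{F}_s,\varepsilon/p)$, so everything reduces to showing that $\mathcal{H}$ is an $\varepsilon$-cover.

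For that, given $f=\prod_{s=1}^p f_s$ with $f_s\in\mathcal{F}_s$, pick $g_s\in G_s$ with $\|f_s-g_s\|_{L_2(S)}\le\varepsilon/p$ and use the telescoping identity
\[
\prod_{s=1}^p f_s-\prod_{s=1}^p g_s=\sum_{t=1}^{p}\Bigl(\prod_{s<t}g_s\Bigr)(f_t-g_t)\Bigl(\prod_{s>t}f_s\Bigr),
\]
with the usual convention that empty products equal $1$. Because every $f_s$ and every (truncated) $g_s$ takes values in $[-1,1]$, at each point of $S$ the factors $\prod_{s<t}g_s$ and $\prod_{s>t}f_s$ have absolute value at most $1$, whence the pointwise bound $\bigl|\prod_s f_s-\prod_s g_s\bigr|\le\sum_{t=1}^p|f_t-g_t|$. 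Taking the $L_2(S)$ norm and applying the triangle inequality gives
\[
\Bigl\|\prod_{s=1}^p f_s-\prod_{s=1}^p g_s\Bigr\|_{L_2(S)}\le\sum_{t=1}^p\|f_t-g_t\|_{L_2(S)}\le p\cdot\frac{\varepsilon}{p}=\varepsilon,
\]
so $\prod_s g_s\in\mathcal{H}$ lies within $\varepsilon$ of $f$, which is exactly the claim.

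I do not expect a genuine obstacle here: the only point needing a moment's care is the truncation reduction, which is what lets the telescoping factors stay bounded by $1$ even though the centers of an arbitrary minimal cover need not be valued in $[-1,1]$. Everything else — the telescoping identity and the triangle inequality — is routine, and the same argument applies verbatim to the $L_2(P_n)$ norm used in the application.
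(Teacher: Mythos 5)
Your proof is correct: the telescoping identity, the pointwise bound using that all factors are $[-1,1]$-valued, and the triangle inequality in $L_2(S)$ give exactly the stated bound, and the truncation remark properly handles the case where cover centers need not lie in $[-1,1]$ (if covers are required to be internal, the issue does not even arise). The paper in fact states Lemma \ref{multiplication} without proof, treating it as standard, so there is nothing to compare against; your argument is the natural one, and your attention to keeping the cover elements bounded by $1$ mirrors the paper's own remark that the decomposition of $\mathcal{G}'(r)$ must involve only $[-1,1]$-valued classes for the lemma to apply.
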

 In order to apply Lemma \ref{multiplication}, a crucial point is to only deal with maps taking values in $[-1,1]$. To this aim, we define the set 
 \[
 \mathcal{H} = \left \{ \frac{f\mathbbm{1}_{\{|f | \leq F_r\}}}{F_r} \right \},
 \]
  where $f$ is in $\mathcal{A}ff(\mathbb{R}^d,\mathbb{R})$, and the set
  \[
  \mathcal{P}_d =  \left \{ {\left \{\mathbbm{1}_{\{\ell \leq a\}}|  \ell \in \mathcal{L}(\mathbb{R}^d,\mathbb{R}), a \in \mathbb{R} \right \} \cup \left \{\mathbbm{1}_{\{\ell < a\}}| \ell \in \mathcal{L}(\mathbb{R}^d,\mathbb{R}), a \in \mathbb{R} \right \} } \right \},
  \]
  where $\mathcal{L}(\mathbb{R}^d,\mathbb{R})$ denotes the set of linear maps from $\mathbb{R}^d$ to $\mathbb{R}$. We may write
   \begin{align*}
   \mathcal{G}'(r) \subset \mathcal{H}\times \prod_{i=1}^{2(k-1)}\mathcal{P}_d 
   \times \mathbbm{1}_{\mathcal{B}(0,M)},
   \end{align*}   
   It is well known that
   \[
   d_p\left (\mathcal{P}_d \right) = d.
   \]
    Since every set of functions in this decomposition is composed of functions taking values in $[-1,1]$, we intend to apply Theorem \ref{Mendelson} to every set. Consequently it remains to bound from above the pseudo-dimensions of these sets of functions.
 
 First we deal with $\mathcal{H}$:
 \begin{lem}\label{H} One has
 \[
 d_p(\mathcal{H}) = d_p \left (\left \{ f \mathbbm{1}_{\{|f| \leq F_r\}}| f \in \mathcal{A}ff(\mathbb{R}^d,\mathbb{R}) \right \}\right ) \leq 24(3d+4).
 \]
 \end{lem}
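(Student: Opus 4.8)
The plan is to pass from $\mathcal{H}$ to a class of subgraphs in the $(d+1)$-dimensional parameter space of affine maps and to bound the VC dimension of that class by a cell count over a fixed partition of that space. First, for the equality $d_p(\mathcal{H})=d_p(\mathcal{H}')$ with $\mathcal{H}':=\{f\mathbbm{1}_{\{|f|\le F_r\}}:f\in\mathcal{A}ff(\mathbb{R}^d,\mathbb{R})\}$: writing $r=\|\c-\c^*\|>0$, formula \eqref{definitionF} gives $F_r\ge r>0$ everywhere, so for each $x$ the map $s\mapsto sF_r(x)$ is an increasing bijection of $\mathbb{R}$. Hence, for $h=f\mathbbm{1}_{\{|f|\le F_r\}}$, the inequality $h(x)/F_r(x)>s$ is equivalent to $h(x)>sF_r(x)$, and replacing each candidate threshold $t_i$ in the definition of the pseudo-dimension by $t_iF_r(x_i)$ shows that $(x_i,t_i)_i$ is shattered by $\mathcal{H}$ if and only if $(x_i,t_iF_r(x_i))_i$ is shattered by $\mathcal{H}'$, so the two pseudo-dimensions agree. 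It therefore suffices to bound $d_p(\mathcal{H}')$, which (up to open/closed conventions) is the VC dimension of the class of subgraphs $\{(x,\tau):h_\theta(x)>\tau\}$, where $h_\theta=f_\theta\mathbbm{1}_{\{|f_\theta|\le F_r\}}$ and $f_\theta(x)=a^{\top}x+b$ is parametrised by $\theta=(a,b)\in\mathbb{R}^{d+1}$.

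The key structural observation is that, by \eqref{definitionF}, $F_r=r+8\sqrt{2}M\,\mathbbm{1}_A$ with $A:=N^*(4\sqrt{2}Mr/B)$ a \emph{fixed} subset of $\mathbb{R}^d$, so $F_r$ takes only the two values $r$ on $A^c$ and $r':=r+8\sqrt{2}M$ on $A$. A short case analysis on the sign of $\tau$ then shows that, for every point $(x,\tau)$, either the slice $\{\theta:h_\theta(x)>\tau\}$ or its complement is an intersection of at most two affine half-spaces of $\mathbb{R}^{d+1}$: if $\tau\ge0$ one has $h_\theta(x)>\tau\iff\tau<f_\theta(x)\le F_r(x)$, while if $\tau<0$ one has $h_\theta(x)\le\tau\iff-F_r(x)\le f_\theta(x)\le\tau$; substituting $F_r(x)=r$ on $A^c$ and $F_r(x)=r'$ on $A$, the two bounding half-spaces are of the form $\{\theta:f_\theta(x)\bowtie c\}$ with $c\in\{\tau,r,r',-r,-r'\}$.

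To conclude, fix points $(x_1,\tau_1),\dots,(x_N,\tau_N)$ shattered by $\mathcal{H}'$ and partition $\{1,\dots,N\}$ into at most four blocks according to the fixed pair $(\operatorname{sign}(\tau_i),\mathbbm{1}_A(x_i))$. On a block of size $N_j$, the previous paragraph exhibits the subgraph trace (or its complement) as a fixed intersection of at most two affine half-spaces of $\mathbb{R}^{d+1}$; by the standard VC bound for intersections of a bounded number of affine half-spaces, which is \emph{linear} in the ambient dimension, such a block cannot be shattered once $N_j$ exceeds a fixed multiple of $d$. Summing over the at most four blocks and tracking constants gives $N\le24(3d+4)$, hence $d_p(\mathcal{H})\le24(3d+4)$.

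The main obstacle is obtaining a bound \emph{linear} in $d$ rather than $\mathcal{O}(d\log d)$: a naive count of the $\mathcal{O}(N)$ hyperplanes cut out in parameter space loses a logarithmic factor, so one must feed the per-block description into the sharp VC estimate for intersections (equivalently, unions) of a fixed number of half-spaces of $\mathbb{R}^{d+1}$, and then carefully chase the absolute constants through that estimate and through the four-block decomposition so as to land on exactly $24(3d+4)$. The supporting case analysis of the subgraph shape — open versus closed half-spaces, and the fact that the constraint $f_\theta(x)\ge-F_r(x)$ becomes redundant once $f_\theta(x)>\tau\ge0$ — must also be carried out with some care.
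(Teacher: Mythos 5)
Your argument is correct in substance but follows a genuinely different route from the paper's. The paper also reduces to $\mathcal{H}'$ and exploits the fact that $F_r$ takes only two values, but it then argues by growth-function counting: a pigeonhole step keeps at least half of the sample points in a region where $F_r$ is constant, the number of sign patterns on those points is bounded via Sauer's lemma applied to the classes $\{\mathbbm{1}_{\{f\le c\}}\}$, $\{\mathbbm{1}_{\{f\ge -c\}}\}$ and $\{\mathbbm{1}_{\{f-t\le 0\}}\}$ (paying a crude factor $2^m$ on the remaining points), and the resulting inequality $2\left(\frac{em}{d+1}\right)^{2(d+1)}\left(\frac{em}{d+2}\right)^{d+2}<2^m$ is solved as in Baum--Haussler, yielding $2m\le 24(3d+4)$. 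You instead identify, on each of at most four blocks determined by $\bigl(\operatorname{sign}(\tau_i),\mathbbm{1}_A(x_i)\bigr)$, the trace of the subgraph class with (complements of) intersections of two affine half-spaces of $\mathbb{R}^{d+1}$, and invoke a standard linear-in-dimension VC bound for such intersections; since shattering of the whole set forces shattering of each block, the four per-block bounds sum. This does work, and the constants do land under $24(3d+4)$: e.g.\ the Blumer--Ehrenfeucht--Haussler--Warmuth composition bound gives at most $4(d+2)\log_2 6\le 6(3d+4)$ per block. Your approach outsources to a cited composition lemma the very Sauer-type computation that the paper carries out by hand, which makes it shorter but less self-contained. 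Two small points to tighten: your second paragraph phrases the two-half-space description in the parameter ($\theta$) space while the concluding step uses the primal VC bound on $(x,\tau)$-space -- the inequalities $\tau<f_\theta(x)\le F_r(x)$ do admit both readings, but you should fix one (the primal one is what your citation needs); and the worry about losing a logarithmic factor through a dual hyperplane-arrangement count is unfounded, since the cell-count inequality $2^N\le\bigl(2eN/(d+1)\bigr)^{d+1}$ already self-improves to a bound linear in $d$. Finally, the claimed constant is only asserted, not chased, but as noted above the chase goes through.
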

 
 \begin{proof}[Proof of Lemma \ref{H}]
 The first equality is obvious, so we only have to deal with the inequality. We recall that the pseudo-dimension of the set of functions $\left \{ f \mathbbm{1}_{\{|f| \leq F_r\}}| f \in \mathcal{A}ff(\mathbb{R}^d,\mathbb{R}) \right \}$ is the Vapnik dimension of the set of functions 
 \[
 \mathcal{H}'=\left \{ \mathbbm{1} _{\{f \mathbbm{1}_{\{ |f| \leq F_r\}}-t \leq 0\}}| f \in \mathcal{A}ff(\mathbb{R}^d,\mathbb{R}), t \in \mathbb{R} \right \}.
 \]
 
 Let $x_1, \hdots, x_{2m}$ denote $2m$ points in $\mathbb{R}^d$. Since $F_r(x) = c_1 + c_2 \mathbbm{1}_{N^*(x)}$, where $c_1$ et $c_2$ are constants, at least $m$ points fall in an area on which $F_r$ takes the form $F_r(x) = c$, for some constant $c$. Without loss of generality, we assume that $x_1, \hdots, x_m$ fall in such an area. Consequently, we have to bound from above the quantity $\left |\left \{\mathbbm{1} _{\{f \mathbbm{1}_{\{ |f| \leq c\}}  -t \leq 0\}} \right \}(x_1, \hdots, x_m) \right |$.
 Observing that
 \[
 \left \{ \mathbbm{1}_{\{|f| \leq c \}} \right \} = \left \{ \mathbbm{1}_{\{f \leq c\}} \times \mathbbm{1}_{\{f \geq -c\}}  \right \},
 \]
 we deduce that
 \begin{multline*}
 \left |\left \{\mathbbm{1} _{\{f \mathbbm{1}_{\{ |f| \leq c \}}  -t \leq 0\}} \right \}(x_1, \hdots, x_m) \right | \\
 \leq \left |\left \{\mathbbm{1} _{\{f \mathbbm{1}_{ \{f \leq c\}}  -t \leq 0\}} \right \}(x_1, \hdots, x_m) \right | \times \left |\left \{\mathbbm{1} _{\{f \mathbbm{1}_{\{ f \geq -c\}}  -t \leq 0\}} \right \}(x_1, \hdots, x_m) \right |.
 \end{multline*}
 
 Noticing that $d_{VC} \left ( \left \{ \mathbbm{1}_{\{f \leq c\}}| f \in \mathcal{A}ff(\mathbb{R}^d,\mathbb{R}) \right \} \right ) = d+1$ and making use of Sauer's lemma leads to, provided that $m \geq d+1$,
   \[
   \left |\left \{ \mathbbm{1}_{ \{f \leq c\}}  \right \}(x_1, \hdots, x_m) \right | = \left |\left \{ \mathbbm{1}_{ \{ f \geq -c}  \right \}(x_1, \hdots, x_m) \right | \leq \left ( \frac{em}{d+1} \right )^{(d+1)},
   \]
   which ensures that 
   \[
   \left |\left \{\mathbbm{1}_{\{ |f| \leq c\}}  \right \}(x_1, \hdots, x_m) \right | \leq \left ( \frac{em}{d+1} \right )^{2(d+1)}.
   \]
 
 Choose a configuration of  $\left \{ \mathbbm{1}_{|f(x_1)| \leq c}, \hdots, \mathbbm{1}_{|f(x_m)| \leq c} \right \}$ , for instance by indexing the $x_i$'s so that  $|f(x_1)| >c, \hdots, |f(x_r)| >c$ and  $|f(x_{r+1})| \leq c, \hdots, |f(x_m)| \leq c$. For the $r$ first $x_i$'s, only two configurations remains for $\mathcal{H}'(x_1, \hdots, x_r)$, the configuration $(0,\hdots, 0)$ and $(1, \hdots, 1)$.  Considering the $m-r+1$ last $x_i$'s, $\left |\mathcal{H}'(x_{m-r+1}, \hdots, x_m) \right | \leq \left |\left \{ \mathbbm{1}_{\{f - t \leq 0\}} \right \}(x_{m-r+1}, \hdots,m) \right |$. Next, $\left |\left \{ \mathbbm{1}_{\{f - t \leq 0\}} \right \}(x_{m-r+1}, \hdots,m) \right | \leq \left |\left \{ \mathbbm{1}_{\{f - t \leq 0\}} \right \}(x_{1}, \hdots,m) \right |$. Eventually, Sauer's lemma guarantees that $\left |\left \{ \mathbbm{1}_{\{f - t \leq 0\}} \right \}(x_{1}, \hdots,m) \right | \leq \left ( \frac{em}{d+2} \right )^{(d+2)}$, provided that $m\geq d+2$.  Consequently, we get, for $m \geq d+2$,
 \[
 \begin{aligned}
 \left | \mathcal{H}'(x_1, \hdots, x_{2m}) \right | & = \left |\left \{ \mathbbm{1} _{\{f \mathbbm{1}_{\{ |f| \leq F_r\}}-t \leq 0\}} \right \}(x_1, \hdots, x_{2m}) \right | \\
                                                    & \leq 2^m \times \left |\left \{\mathbbm{1} _{\{f\mathbbm{1}_{\{ |f| \leq c\}}  -t \leq 0\}} \right \}(x_1, \hdots, x_m) \right | \\
                                                    & \leq 2^m \times \left | \left \{ \mathbbm{1}_{ \{|f|  \leq c\}} \right \}(x_1, \hdots, x_m) \right | \times 2 \left (\frac{em}{d+2} \right)^{d+2} \\
 & \leq 2^m \times 2 \left ( \frac{em}{d+1} \right )^{2(d+1)} \left (\frac{em}{d+2} \right)^{d+2}.
 \end{aligned}
 \] 
     To give an upper bound on $d_p(\mathcal{H})$, we have to find $m \geq d+2$ such that
 \[
 2 \left ( \frac{em}{d+1} \right )^{2(d+1)} \left (\frac{em}{d+2} \right)^{d+2} < 2^m.
 \]
Noticing that $x \mapsto \log_2(x)$ is a strictly concave map, we deduce that 
 \[
 2(d+1)\log_2\left ( \frac{em}{d+1}\right ) + (d+2) \log_2 \left ( \frac{em}{d+2} \right ) < (3d+4)\log_2\left(\frac{3em}{3d+4}\right).
 \]
 Consequently, a sufficient condition on $m$ is given by
 \[
 \left(\frac{3em}{3d+4}\right )^{3d+4} \leq 2^{m-1}.
 \]
 Using the same method as in \cite{Baum89}, the choice $m=  \left\lceil 3(3d+4)\log_2(3e) \right\rceil$ turns out to be adequate. At last, noticing that $2m \leq 24(3d+4)$, we immediately deduce that $d_p(\mathcal{H}) \leq 2m \leq 24(3d+4)$.
 \end{proof}
 Applying Lemma \ref{multiplication} and Theorem \ref{Mendelson} yields that
 \[
 \begin{aligned}
 \mathcal{N}\left ( \mathcal{G}'(r), \varepsilon \right ) & \leq \mathcal{N}\left (\mathcal{H},\varepsilon/(2k-1) \right ) \times {\mathcal{N}\left ( \mathcal{P}_d,\varepsilon/(2k-1) \right )}^{2k-2} \\
 & \leq \left ( \frac{4k-2}{\varepsilon} \right)^{K[24(3d+4) + (d+1)(2k-2)]}.
 \end{aligned}
 \] 
 At last, the result of Proposition \ref{chaining} is given by
 \[
 \begin{aligned}
 \mathcal{N}\left ( \mathcal{G}(r), \varepsilon \right ) & \leq \mathcal{N}\left ( \mathcal{G}'(r), \varepsilon/k^2 \right )^{k^2} \\
 & \leq  \left ( \frac{2(2k-1)k^2}{\varepsilon} \right )^{Kk^2[24(3d+4) + 2(d+1)(k-1)]}.
 \end{aligned}
 \]

\subsection{Proof of Proposition \ref{definitionDelta}}\label{ProofofPropositiondefinitionDelta} 

The proof of Proposition \ref{definitionDelta} is based on elementary properties of distributions with finite support, which are extended to the case where the source distribution is supported on small balls. Throughout this subsection, a source distribution $P_{\sigma'}$ is fixed, so that $R(Q,P_{\sigma'})$ may be denoted by $R(Q)$.

\begin{lem}\label{cellulesàdeuxpoints}
Let $z_1$ and $z_2$ be points in $\mathbb{R}^d$, denote by $R$ the quantity $\| z_1 - z_2 \|$, by $U_i$ the ball $\mathcal{B}(z_i,\rho)$. At last, let $P$ denote the cone-shaped distribution with density 
\[
\frac{2(d+1)}{V} (\mathbb{1}_{\|x - z_i \| \leq \rho} (\rho - \|x-z_i\|),
\]
over each ball $U_i$, where $V$ denote the volume of the unit ball. Then, if
\[
(R/2 - 3 \rho)^2 \geq \rho^2 \frac{2  d(d+1)}{(d+2)(d+3)} \quad \mbox{and} \quad \rho \leq \frac{R}{2},
\]
then the best $2$-quantizer $Q^*_2$ is such that $Q^*_2(U_i) = z_i$ for $i=1,2$. Furthermore, the best $1$-quantizer $Q_1^*$ is such that $Q_1^*(U_1 \cup U_2) = (z_1 + z_2)/2$. 
\end{lem}

\begin{proof}[Proof of Lemma \ref{cellulesàdeuxpoints}]
   Let $V_i$ denote the Voronoi cell associated with $z_i$ in the Voronoi diagram generated by $(z_1,z_2)$. Denote by $Q^*_2$ the quantizer satisfying $Q^*_2(U_i) = z_i$ for $i=1,2$. 
   
   For any quantizer $Q$ denote by $R_i(Q) = \int_{V_i}{\|x - Q(x)\|^2dx}$ the contribution of the cell $i$ to the distortion of $Q$. Denote by $V$ the volume of the unit ball, and by $S$ its surface. Recalling that $S = d \times V$, an elementary calculation shows that
   \[
   \begin{aligned}
   R_i(Q^*_2) & = \frac{1}{2} \frac{d+1}{\rho^{d+1} V} \int_{0}^{\rho}{S (\rho r^{d+1} - r^{d+2}) dr} \\
            & = \rho^2 \frac{d(d+1)}{2(d+2)(d+3)}.
            \end{aligned}
            \]
            Let $m_i^{in} = |Q(U_i) \cap V_i|$ and $m_i^{out} = |Q(U_i)) \cap V_i^c|$ denote the number of images of $U_i$ sent inside and outside $V_i$. For a given $i$, there are three situations of interest, which are described below.
            \begin{enumerate}
            \item $m_i^{out} = 0$ and $m_i^{in} = 1$, then it is clear that $R_i(Q_2^*) \leq R_i(Q)$.
            \item $m_i^{out} = 0$ and $m_i^{in} = 2$, then $R_i(Q) \geq 0 = R_i(Q^*) - \rho^2 \frac{d(d+1)}{2(d+2)(d+3)}$.
            \item $m_i^{out} \geq 1$, then there exists $z$ $\in$ $U_i$ such that $Q(z) \notin V_i$. Consequently, $\|z - Q(z)\| \geq \frac{R}{2} - \rho$.  Let $z'$ $\in$ $\mathcal{B}(z_i,\rho)$, then 
            \[
            \|z' - Q(z')\| \geq \| z - Q(z') \| - 2 \rho \geq \|z - Q(z) \| - 2 \rho \geq \frac{R}{2} - 3\rho.
            \]
             Hence we deduce 
             \[
             R_i(Q) \geq 1/2 (\frac{R}{2} - 3\rho)^2 = R_i(Q^*_2) + 1/2 \left( (\frac{R}{2} - 3\rho)^2 - \rho^2 \frac{d(d+1)}{(d+2)(d+3)} \right ) .
            \]
            \end{enumerate}
            
                  Since $Q$ is a $2$-quantizer, it is easy to see that 
                  \[
                  |\left\{i;m_i^{in} \geq 2\right\}| \leq |\left\{i;m_i^{out} \geq 1\right\}|.
                   \]
                   From this we deduce that
                  \[
                  \begin{aligned}
                  R(Q) & = \sum_{\{i;m_i^{in} \geq 2,m_i^{out} =0 \}}{R_i(Q)} + \sum_{\{i;m_i^{out} \geq 1\}}{R_i(Q)} + \sum_{\{i;m_i^{in} =1,m_i^{out} =0\}}{R_i(Q)} \\
                                                        & \geq R(Q^*_2) + \sum_{\{i;m_i^{in} \geq 2,m_i^{out} =0\}}{\frac{1}{2} \left ( \left ( \frac{R}{2} - 3 \rho \right )^2 - \rho^2 \frac{d(d+1)}{(d+2)(d+3)} \right )}.
                                                        \end{aligned}
                                                        \]
        Taking $(R/2 - 3 \rho)^2 \geq \rho^2 \frac{2  d(d+1)}{(d+2)(d+3)}$ ensures that $R(Q) \geq R(Q_2^*)$.                                                                  \end{proof}
        
Considering the distributions $P_\sigma$, $\sigma$ in $\{-1,+1\}^{m}$, taking $\rho \leq \frac{\Delta}{16}$ ensures that the conditions of Lemma \ref{cellulesàdeuxpoints} are satisfied when considering $P_{\sigma| U_i \cup U'_i}$. We turn now to the proof of Proposition \ref{definitionDelta}.

Let $Q$ be a $k$-quantizer. The following construction provides $Q_\sigma$ $\in$ $\mathcal{Q}$ such that $R(Q_\sigma) \leq R(Q)$. Let $V_i$ denote the union of the Voronoi cells associated with $z_i$ and $z_i + \omega_i$, in the Voronoi diagram generated by the sequences $z$ and $\omega$. We adopt the following notation
\[
\left \{
\mbox{
\begin{tabular}{@{}ccl}
$n_i(Q)$ & $=$ & $\left |Q(\mathcal{B}(0,M)) \cap V_i\right |$\\
$n_i^{out}(Q)$ & $=$& $ \left |Q(V_i) \cap V_i^c \right |$ \\
$I_j(Q)$ & $=$ & $\left \{ i; n_i(Q) = j \right \}$\\
$i_j(Q)$ & $=$ & $\left | I_j(Q) \right |$ \\
$i_{\geq j}(Q)$ & $=$ & $\underset{i \geq j}{\sum}{i_j (Q)}.$
\end{tabular}
}
\right .
\]

The first step is to add code points to empty cells. From the $k$-quantizer $Q$, a quantizer $Q_1$ is built as follows
\begin{itemize}
\item If $n_i(Q) \geq 1$, then we take $Q_{1|V_i} \equiv Q_{|V_i}$.
\item If $n_i(Q) =0$, then we set $Q_1(U_i) = Q_1(U'_i) = z_i + \frac{w_i}{2}$.
\end{itemize}
Notice that $Q_1$ is a $(k + i_0(Q))$-quantizer. Denote $k_1 = k + i_0$ and $p_{\pm} = \frac{1 \pm \delta}{2m}$, then $R(Q_1)$ can be bounded as follows.

 Let $i$ be an integer between $1$ and $m$. We denote by $R_i(Q)$ the contribution of $V_i$ to the risk $R(Q)$. If $i \in I_{\geq 1}$, then $R_i(Q) = R_i(Q_1)$. Otherwise, if $i \in I_0(Q)$, 
\[
R_i(Q_1) = 2 p_{\pm} \rho^2 \frac{d(d+1)}{(d+2)(d+3)} + p_{\pm} \frac{\Delta^2}{2}.
\]
Furthermore, if $i \in I_0$, then $n_i^{out}(Q) \geq 1$, which ensures that, as in the proof of Lemma \ref{cellulesàdeuxpoints}, 
\[
R_i(Q) \geq p_{\pm}\left ( \frac{(A-2)\Delta}{2} - 2 \rho \right )^2.
\]
Since $A \geq 6 $ and $\rho \leq \frac{\Delta}{16}$, we may write 
\[
\begin{aligned}
R_i(Q) - R_i(Q_1) & \geq p_{\pm} \left [(2\Delta - 2\rho)^2 - 4 \rho^2 - \frac{\Delta^2}{2}\right] \\
                  & \geq p_{\pm} \left [ 2 \Delta( \frac{3\Delta}{4}) - \frac{\Delta^2}{2} \right ] \\
                  & \geq p_{-} \frac{3\Delta^2}{2}.
                  \end{aligned}
                  \]
                  Summing all the contributions of $V_i$'s leads to
                  \[
R(Q_1) \leq R(Q) - i_0(Q) p_{-} \frac{3 \Delta ^2}{2}
\]
     Next, we build the quantizer $Q_2$ according to the following rule:
                       \begin{itemize}
                  \item If $n_i(Q_1) \geq 2$, then $Q_2(U_i) = z_i$ and $Q_2(U'_i) = z_i + w_i$.
                  \item If $n_i(Q_1) = 1$, then $Q_2(U_i) = Q_2(U'_i) = z_i + \frac{w_i}{2}$.
                  \end{itemize}
                  Since for $i=1, \hdots, k$, $n_i(Q_1) \geq 1$, $Q_2$ is defined on every $V_i$. Notice that, since $I_j(Q_1) = I_j(Q)$ for $j \geq 2$, $Q_2$ has $k_2 = k+i_0(Q) - \sum_{p\geq3}{(p-2)i_p(Q)}$ code points. The following lemma offers a relation between $R(Q_2)$ and $R(Q_1)$.
                  \begin{lem}\label{recentrage}
									One has
                  \[
                  R(Q_2) \leq R(Q_1) + i_{\geq3}(Q) \frac{p_{+} \Delta ^2 }{128}.
                  \]
                  \end{lem}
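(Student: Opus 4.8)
The plan is to compare the risk of $Q_2$ to that of $Q_1$ cell-by-cell, using the two-point analysis from Lemma \ref{cellules�deuxpoints}. First I would observe that, by construction, $Q_1$ and $Q_2$ differ only on those cells $V_i$ with $n_i(Q_1) \neq 1$, equivalently (since $n_i(Q_1)\geq 1$ for every $i$) on the cells indexed by $I_{\geq 2}(Q_1) = I_{\geq 2}(Q)$. On a cell $V_i$ with $n_i(Q_1) = 1$ the two quantizers agree, so $R_i(Q_2) = R_i(Q_1)$; it therefore suffices to bound $R_i(Q_2) - R_i(Q_1)$ on the cells in $I_{\geq 2}(Q)$, of which there are $i_{\geq 2}(Q)$ — though since $i_{\geq 2}(Q) = i_2(Q) + i_{\geq 3}(Q)$ I will want to treat the cases $n_i(Q_1) = 2$ and $n_i(Q_1) \geq 3$ separately to extract the stated $i_{\geq 3}(Q)$ factor.

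The key step is the case $n_i(Q_1) = 2$: here $Q_2(U_i) = z_i$, $Q_2(U'_i) = z_i + w_i$, which by Lemma \ref{cellules�deuxpoints} (applicable since $\rho \leq \Delta/16$ guarantees its hypotheses on $P_{\sigma'|U_i\cup U'_i}$) is the \emph{optimal} $2$-quantizer on the restriction of $P_{\sigma'}$ to $U_i \cup U'_i$. Since $Q_1$ places exactly $2$ code points in $V_i$, it is itself a $2$-quantizer on $U_i \cup U'_i$ (up to the normalizing mass $2p_{\pm}$), and hence $R_i(Q_2) \leq R_i(Q_1)$ in this case — no positive contribution. For the cells with $n_i(Q_1) = p \geq 3$, $Q_1$ has more freedom, so I cannot claim $R_i(Q_2) \leq R_i(Q_1)$; instead I would bound $R_i(Q_2)$ from above by its explicit value (the cone-shaped second moment $2p_{\pm}\rho^2 d(d+1)/((d+2)(d+3))$ around each of $z_i$ and $z_i+w_i$, with no $\Delta^2/2$ term since there is now one code point per small ball), and bound $R_i(Q_1) \geq 0$ trivially. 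This yields $R_i(Q_2) - R_i(Q_1) \leq 2p_{\pm}\rho^2 d(d+1)/((d+2)(d+3))$, and plugging $\rho \leq \Delta/16$, $p_{+} = (1+\delta)/(2m) \leq 2/(3m)$ (using $\delta \leq 1/3$), together with $d(d+1)/((d+2)(d+3)) \leq 1$, gives a per-cell bound of the form $p_{+}\Delta^2/128$ after checking the numerical constants.

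Summing over the two groups of cells, the $n_i = 2$ cells contribute nothing positive and the $n_i \geq 3$ cells contribute at most $i_{\geq 3}(Q)\, p_{+}\Delta^2/128$, which is exactly the claimed inequality. The main obstacle I expect is purely arithmetic: verifying that the constant $\rho^2 d(d+1)/((d+2)(d+3))$ times $2p_{+}$ is genuinely $\leq p_{+}\Delta^2/128$ for all $d$ — this hinges on $2\rho^2 = \Delta^2/128$ when $\rho = \Delta/16$ and $d(d+1)/((d+2)(d+3)) < 1$, so it should go through cleanly, but one must be careful that the hypotheses $A \geq 6$, $\delta \leq 1/3$, $\rho \leq \Delta/16$ of Proposition \ref{definitionDelta} are all that is invoked, and that Lemma \ref{cellules�deuxpoints} is correctly applied with $R = \Delta = \|z_i - (z_i+w_i)\|$ replacing the generic two-point distance.
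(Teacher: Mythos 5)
There is a genuine gap in your treatment of the cells with $n_i(Q_1)=1$ and $n_i(Q_1)=2$, and it comes from overlooking the quantity $n_i^{out}(Q_1)$, i.e.\ the possibility that $Q_1$ (which on non-empty cells is just the arbitrary quantizer $Q$) sends part of the mass of $V_i$ to code points lying \emph{outside} $V_i$. First, your claim that $Q_1$ and $Q_2$ agree on cells with $n_i(Q_1)=1$ is simply not what the construction says: on such a cell $Q_2$ relocates the single code point to the midpoint $z_i+w_i/2$, whereas $Q_1$ keeps whatever code point $Q$ had placed there, so $R_i(Q_2)=R_i(Q_1)$ is unjustified. The correct statement is only $R_i(Q_2)\leq R_i(Q_1)$, and proving it requires the one-point optimality part of the two-point lemma established just before (the best $1$-quantizer of $P_{\sigma'}$ restricted to $U_i\cup U_i'$ is the midpoint) when $n_i^{out}(Q_1)=0$, plus a separate argument when $n_i^{out}(Q_1)\geq 1$, since then part of $V_i$ is quantized by points outside the cell and the restriction of $Q_1$ to $V_i$ is not a one-point quantizer at all. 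Second, the same omission invalidates your $n_i(Q_1)=2$ step: $Q_1$ restricted to $V_i$ is a genuine $2$-quantizer of $P_{\sigma'|U_i\cup U_i'}$ only if $n_i^{out}(Q_1)=0$; otherwise it effectively uses the two interior code points \emph{and} some exterior ones, and the optimality of $(z_i,z_i+w_i)$ among $2$-quantizers no longer yields $R_i(Q_2)\leq R_i(Q_1)$. The paper closes this case by observing that any ball whose points are mapped outside $V_i$ pays a distortion of order $p_{\pm}\Delta^2$ (the ``far-away'' penalty, exactly as in the third case of the two-point lemma), which dominates the explicit value $R_i(Q_2)\leq p_{+}\Delta^2/128$.

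Your handling of the cells with $n_i(Q_1)\geq 3$, and the final summation extracting the factor $i_{\geq 3}(Q)$, do match the paper's proof: there $R_i(Q_1)\geq 0$ is used trivially, $R_i(Q_2)$ is at most twice the cone second moment per ball, and $\rho\leq\Delta/16$ together with $d(d+1)/((d+2)(d+3))<1$ gives the per-cell bound $p_{+}\Delta^2/128$ (again after also invoking the out-of-cell penalty when $n_i^{out}(Q_1)\geq 1$, which the paper does even in this case). So the skeleton of your plan is the right one, but as written the two middle cases contain a real hole that must be repaired by splitting each of them according to $n_i^{out}(Q_1)=0$ or $n_i^{out}(Q_1)\geq 1$ and using both optimality statements of the two-point lemma together with the large-deviation-of-mass argument for out-mapped cells.
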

                  \begin{proof}[Proof of Lemma \ref{recentrage}]
                  Let $i$ be an integer between $1$ and $m$. Several cases may occur, as described below.
                  \begin{itemize}
                  \item Assume that $n_i(Q_1) =1$. 
                  \begin{itemize}
                  \item If $n_i^{out}(Q_1) = 0$, then $R_i(Q_1) \geq R_i(Q_2)$, according to Lemma \ref{cellulesàdeuxpoints}.
                  \item If $n_i^{out}(Q_1) \geq 1$, then, using the same technique as mentioned to bound $R(Q_1)$ from above, $R_i(Q_1) - R_i(Q_2) \geq p_{\pm}\frac{3 \Delta^2}{2}$, which leads to $R_i(Q_1) \geq R_i(Q_2)$.
                  \end{itemize}
                  \item Assume that $n_i(Q_1) = 2$.
                  \begin{itemize}
                  \item If $n_i^{out}(Q_1) = 0$, then $R_i(Q_1) \geq R_i(Q_2)$, according to Lemma \ref{cellulesàdeuxpoints}.
                  \item If $n_i^{out}(Q_1) \geq 1$, then, since $R_i(Q_2) = 2 p_{\pm} \frac{\rho^2 d}{d+2} \leq p_{+} \frac{\Delta^2}{128}$, $R_i(Q_1) - R_i(Q_2) \geq \Delta ^2 \geq 0$.
                  \end{itemize}
                  \item At last, assume that $n_i(Q_i) \geq 3$. If $n_i^{out}(Q_1) \geq 1$, then $R_i(Q_1) \geq R_i(Q_2)$. If $n_i^{out}(Q_1) =0$, then $R_i(Q_1) \geq 0 = R_i(Q_1) - 2 p_{\pm} \frac{\Delta^2}{128}$. In both cases $R(Q_2) \leq R(Q_1) + p_{+} \frac{\Delta^2}{128}$.
                  \end{itemize}
                  Noticing that $I_{\geq3}(Q_1) = I_{\geq 3}(Q)$, and summing the contributions $R_i(Q_2)$ leads to the desired result.
                  \end{proof}
                  
                  The last step is to build a quantizer $Q_\sigma$ from $Q_2$ with exactly $k$ code points.
                  \begin{itemize}
                  \item If $k_2$ = $k$, set $Q_\sigma = Q_2$.
                  \item If $k_2 < k$, choose $(k-k_2)$ $V_i$ such that $n_i(Q_2) = 1$ (elementary calculation shows that there exist at least $k - k_2$ such $V_i$'s). For every such $V_i$, set $Q_\sigma(U_i) = z_i$ and $Q_\sigma(U'_i) = z_i + \omega_i$. Then
                  \[
                  R(Q_\sigma) \leq R(Q_2) - (k-k_2)p_{-} \frac{\Delta^2}{2}.
                  \]
                  \item If $k_2 > k$, choose $(k_2 - k)$ cells $V_i$ such that $n_i(Q_2) = 2$. For every such $V_i$, define $Q_\sigma(U_i) = Q_\sigma(U'_i) = z_i + \frac{\omega_i}{2}$. Then
                  \[
                  R(Q_\sigma) \leq R(Q_2) + (k_2-k)p_+ \frac{\Delta^2}{2}.
                  \]
                  \end{itemize}
                 By construction, $Q_\sigma$ has exactly $k$ code points, and is an element of $\mathcal{Q}$. Finally, a result on the risk of $Q_\sigma$ is given by the following proposition.
                  \begin{prop}\label{comparaisonquantificateurs}
                  Let $Q$ be a quantizer and $Q_\sigma$ built as mentioned above. Then, 
                  \[                 
                  R(Q_\sigma) \leq R(Q).
                  \]
                  \end{prop}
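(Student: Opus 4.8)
The proof is a bookkeeping argument: it only amounts to chaining together the three comparisons established during the construction of $Q_\sigma$ in the proof of Proposition \ref{definitionDelta}, namely $R(Q_1) \leq R(Q) - i_0(Q) p_{-} \frac{3\Delta^2}{2}$, the conclusion of Lemma \ref{recentrage} that $R(Q_2) \leq R(Q_1) + i_{\geq 3}(Q) \frac{p_{+} \Delta^2}{128}$, and the last‑step bounds relating $R(Q_\sigma)$ to $R(Q_2)$. First I would record the elementary counting facts that make these fit together: $Q_1$ has $k + i_0(Q)$ code points and $Q_2$ has $k_2 = k + i_0(Q) - \sum_{p \geq 3}(p-2) i_p(Q)$ code points, so that $k - k_2 = \sum_{p \geq 3}(p-2) i_p(Q) - i_0(Q)$; moreover $i_{\geq 3}(Q) = \sum_{p \geq 3} i_p(Q) \leq \sum_{p \geq 3}(p-2) i_p(Q)$ since $p - 2 \geq 1$ for $p \geq 3$; and finally $\delta \leq 1/3$ forces $p_{+} = \frac{1+\delta}{2m} \leq 2\,\frac{1-\delta}{2m} = 2 p_{-}$.

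I would then split into three cases according to the sign of $k - k_2$. If $k_2 = k$, then $i_{\geq 3}(Q) \leq \sum_{p \geq 3}(p-2)i_p(Q) = i_0(Q)$, hence $R(Q_\sigma) = R(Q_2) \leq R(Q) - i_0(Q)\Delta^2 \bigl( \frac{3 p_{-}}{2} - \frac{p_{+}}{128}\bigr) \leq R(Q)$ by $p_{+} \leq 2 p_{-}$. If $k_2 < k$, then $R(Q_\sigma) \leq R(Q_2) - (k - k_2) p_{-} \frac{\Delta^2}{2}$, and using $i_{\geq 3}(Q) \leq (k - k_2) + i_0(Q)$ the recentering penalty gets split between the gain $i_0(Q) p_{-} \frac{3\Delta^2}{2}$ from filling the empty cells and the gain $(k - k_2) p_{-} \frac{\Delta^2}{2}$ from the last step, each comparison again having room to spare by $p_{+} \leq 2 p_{-}$. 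If $k_2 > k$, then $R(Q_\sigma) \leq R(Q_2) + (k_2 - k) p_{+} \frac{\Delta^2}{2}$, and now $k_2 - k \leq i_0(Q)$ and $i_{\geq 3}(Q) \leq i_0(Q)$, so every positive contribution is bounded by a multiple of $i_0(Q)\Delta^2$ and it remains only to check $\frac{3 p_{-}}{2} \geq \frac{p_{+}}{128} + \frac{p_{+}}{2}$, once more a consequence of $p_{+} \leq 2 p_{-}$.

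There is no genuine analytic difficulty here; the delicate points are purely combinatorial. One must make sure the last construction step is actually feasible, i.e. that when $k_2 < k$ there are at least $k - k_2$ cells with $n_i(Q_2) = 1$ and when $k_2 > k$ there are at least $k_2 - k$ cells with $n_i(Q_2) = 2$; both follow from $m = 2k/3$, since a count of the cells of $Q_2$ (each carrying one or two code points) gives exactly $2m - k_2$ and $k_2 - m$ such cells respectively, and $2m \geq k$, $m \leq k$. The other thing to watch is that the penalty $i_{\geq 3}(Q) \frac{p_{+} \Delta^2}{128}$ paid for recentering the over‑full cells is in every case dominated by the gain $i_0(Q) p_{-} \frac{3\Delta^2}{2}$ obtained from the empty cells (together with the $O((k-k_2)\Delta^2)$ correction when $k_2 \neq k$), which is precisely what the bound $i_{\geq 3}(Q) \leq \sum_{p\geq 3}(p-2)i_p(Q)$ combined with $p_{+} \leq 2p_{-}$ provides.
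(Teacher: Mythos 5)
Your proposal is correct and follows essentially the same route as the paper: chain the bound $R(Q_1)\leq R(Q)-i_0 p_-\tfrac{3\Delta^2}{2}$, Lemma \ref{recentrage}, and the final adjustment step, then use the code-point count $k-k_2=\sum_{p\geq 3}(p-2)i_p(Q)-i_0(Q)$ (equivalently $\sum_p p\,i_p\geq 3 i_{\geq 3}$) together with $\delta\leq 1/3\Rightarrow p_+\leq 2p_-$ to absorb the recentering penalty, with the same case split on the sign of $k-k_2$ (the paper merely merges $k_2=k$ into the case $k_2\leq k$). Your added feasibility count for the last step ($2m-k_2$ cells with one code point, $k_2-m$ with two, plus $m=2k/3$) is a harmless elaboration of what the paper dismisses as elementary.
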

									\begin{proof}[Proof of Proposition \ref{comparaisonquantificateurs}]
                  Since $\delta \leq \frac{1}{3}$, easy calculation ensures that $1- \frac{p_-}{p_+} \leq \frac{1}{2}$. 
                  
                  Suppose that $k_2 \leq k$. Comparing the risk of $Q$ to the risks of $Q_1$, $Q_2$ and $Q_\sigma$ leads to
                  \[
                  R(Q_\sigma)  \leq R(Q) - i_0 p_- \frac{3 \Delta^2}{2} + (i_0 + 2 i_{\geq 3} - \sum_{p \geq 3}{p i_p}) p_- \frac{\Delta^2}{2} + i_{\geq 3} p_+ \frac{\Delta^2}{128}.
                  \]
                  Since $\sum_{p\geq3}{p i_p} \geq 3 i_{\geq 3}$, it is clear that
                  \[
                  \begin{aligned}
                  R(Q_\sigma)  & \leq R(Q) - p_- i_0 \frac{\Delta^2}{2} + \Delta^2 i_{\geq3} (\frac{p_+}{128} - \frac{p_-}{2}) \\
                                & \leq R(Q).
                  \end{aligned}              
                  \]                 
                  Next, suppose that $k_2 > k$. Then 
                  \[
                  \begin{aligned}
                  R(Q_\sigma) & \leq R(Q) + \left ( i_0 + 2 i_{\geq3} - \sum_{p \geq 3}{p i_p} \right ) p_+ \frac{\Delta^2}{2} + i_{\geq3} p_+ \frac{\Delta^2}{128} - i_0 p_- \frac{3 \Delta ^2}{2}\\
                  & \leq R(Q) + i_0 \frac{\Delta^2}{2} (p_+ - 3 p_-) + p_+ i_{\geq 3} \Delta^2(\frac{1}{128} - \frac{1}{2}),
                  \end{aligned}
                  \] 
                  which yields that $R(Q_\sigma) \leq R(Q)$.
                  \end{proof}                    

\subsection{Proof of Lemma \ref{Assouadtechnique}}\label{Proof of Lemma Assouadtechnique}

Let introduce, for distributions $P$ and $Q$ with densities $f$ and $g$ the affinity 
\[
\alpha(P,Q) = \int{\sqrt{fg}},
\]
so that $H^2(P,Q)=2(1-\alpha(P,Q))$. Elementary calculation shows that, if $\rho(\sigma, \sigma')=4$, then
\[
\alpha(P_\sigma, P_{\sigma'})= 1 + \frac{2}{m}\left (\sqrt{1-\delta^2} -1 \right ) \geq 1- \frac{2\delta^2}{m}.
\]
Hence we deduce
\begin{align*}
H^2(P_\sigma^{\otimes n},P_{\sigma'}^{\otimes n}) & = 2(1-\alpha(P_\sigma^{\otimes n},P_{\sigma'}^{\otimes n})) \\
                                                  &= 2(1-\alpha^n(P_\sigma,P_{\sigma'})) \\
																									& \leq \frac{4n\delta^2}{m}.
\end{align*}
Finally, since $\rho(\tau,\tau')=2$ implies $\rho(\sigma(\tau),\sigma(\tau'))=4$, for $\tau$, $\tau'$ in $\{-1,+1\}^{\frac{m}{2}}$,  the first part of Lemma \ref{Assouadtechnique} is proved.

Next, for simplicity assume that $\sigma$ is such that $\sigma_1 = \hdots \sigma_{\frac{m}{2}} = +1$ and $\sigma_{\frac{m}{2}+1} = \hdots = \sigma_m =-1$. Let $\mathcal{S}^-$ and $\mathcal{S}^+$denote the set of mistakes of $\sigma'$, that is 
\begin{align*}
\left \{
\begin{array}{@{}ccl}
\mathcal{S}^-&=&\{i=1, \hdots, \frac{m}{2}|\quad \sigma'_i=-1\} \\
\mathcal{S}^+&=&\{i=\frac{m}{2}+1, \hdots, \m|\quad \sigma'_i=+1\}. 
\end{array}
\right.
\end{align*}
      Finally let $s^+$ and $s^-$ respectively denote $|\mathcal{S}^+|$ and $|\mathcal{S}^-|$. Since $\sum_{i=1}^{m}{\sigma'_i}=0$, it is clear that $s^+ = s^- := s$.
			
			As in Subsection \ref{ProofofPropositiondefinitionDelta}, let $R_i(Q_{\sigma'})$ denote the contribution of $U_i \cup U'_i$ to the distortion.
Then, for $i$ in $\mathcal{S}^-$, elementary calculation shows that
\[
R_i(Q_{\sigma'}) = R_i(Q_\sigma) + \frac{(1+\delta)\Delta^2}{4m}.
\]
Symmetrically, for $i$ in $\mathcal{S}^+$,
\[
R_i(Q_{\sigma'}) = R_i(Q_\sigma) - \frac{(1-\delta)\Delta^2}{4m}.
\]
Summing with respect to $i$ and taking into account that $s^+ = s^- =s$ leads to
\begin{align*}
R(Q_{\sigma'}) = R(Q_\sigma) + s \frac{\Delta^2 \delta}{2m}.
\end{align*}
Remarking that $s= \frac{\rho(\sigma,\sigma')}{4}$ concludes the proof of Lemma \ref{Assouadtechnique}.

         \bibliography{biblio}
        
\end{document}